\renewcommand{\PrintDOI}[1]{\doi{#1}}
\newtheorem{theorem}{Theorem}[section]
\newtheorem{lemma}[theorem]{Lemma}
\newtheorem{proposition}[theorem]{Proposition}
\newtheorem{fact}[theorem]{Fact}
\newtheorem{corollary}[theorem]{Corollary}
\newtheorem{conjecture}[theorem]{Conjecture}
\newtheorem{remark}[theorem]{Remark}
\newcommand{\oldqed}{}
\def\endofClaim{\hfill\scalebox{.6}{$\Box$}}
\newcommand{\NN}{\mathbb{N}}
\newcommand{\cD}{\mathcal{D}}
\newcommand{\cF}{\mathcal{F}}
\newcommand{\cP}{\mathcal{P}}
\newcommand{\cS}{\mathcal{S}}
\newcommand{\cQ}{\mathcal{Q}}
\newcommand{\eps}{\varepsilon}
\let\epsilon\varepsilon
\newcommand{\cC}{\mathcal{C}}
\newcommand{\Clm}{C_\ell(1,\ldots,1,K)}
\renewcommand{\subset}{\subseteq}
\newcommand{\dcup}{\dot\cup}
\newcommand*\patchAmsMathEnvironmentForLineno[1]{%
\expandafter\let\csname old#1\expandafter\endcsname\csname #1\endcsname
\expandafter\let\csname oldend#1\expandafter\endcsname\csname end#1\endcsname
\renewenvironment{#1}%
{\linenomath\csname old#1\endcsname}%
{\csname oldend#1\endcsname\endlinenomath}}%
\newcommand*\patchBothAmsMathEnvironmentsForLineno[1]{%
\patchAmsMathEnvironmentForLineno{#1}%
\patchAmsMathEnvironmentForLineno{#1*}}%
\begin{document}
\onehalfspace
\shortdate
\yyyymmdddate
\settimeformat{ampmtime}
\footskip=28pt


\title{Finding any given 2-factor in sparse pseudorandom graphs efficiently}

\author[J. Han]{Jie Han}
\author[Y. Kohayakawa]{Yoshiharu Kohayakawa}
\author[P. Morris]{Patrick Morris}
\author[Y. Person]{Yury Person}

\thanks{%
  JH was supported by FAPESP (2014/18641-5, 2013/03447-6).
  YK was partially supported by FAPESP (2013/03447-6) and
    CNPq (310974/2013-5, 311412/2018-1, 423833/2018-9).
  PM is supported by a Leverhulme Trust Study Abroad
  Studentship (SAS-2017-052$\backslash$9).  YP is supported by the
  Carl Zeiss Foundation.  The cooperation of the authors was supported
  by a joint CAPES-DAAD PROBRAL project (Proj.\ no.~430/15, 57350402,
  57391197).
  FAPESP is the S\~ao Paulo Research Foundation.  CNPq is the National
  Council for Scientific and Technological Development of Brazil.%
}

\address{Department of Mathematics, University of Rhode Island, 5 Lippitt Road, Kingston, RI, USA, 02881}
\email{jie\_han@uri.edu}

\address{Instituto de Matem\'atica e Estat\'istica, Universidade de
  S\~ao Paulo, Rua do Mat\~ao 1010, 05508-090 S\~ao Paulo, Brazil}
\email{yoshi@ime.usp.br}

\address{Institut f\"ur Mathematik, Freie Universit\"at Berlin, Arnimallee 3, 14195 Berlin, Germany and Berlin Mathematical School, Germany}
\email{pm0041@mi.fu-berlin.de}

\address{Institut f\"ur Mathematik, Technische Universit\"at Ilmenau, 98684 Ilmenau, Germany}
\email{yury.person@tu-ilmenau.de}

\begin{abstract}
Given an $n$-vertex pseudorandom graph $G$ and an $n$-vertex graph $H$ with maximum
degree at most two, we wish to find a copy of $H$ in $G$, i.e.\ an embedding $\varphi\colon V(H)\to V(G)$ so that $\varphi(u)\varphi(v)\in E(G)$ for all $uv\in E(H)$.  Particular
instances of this problem include finding a triangle-factor and
finding a Hamilton cycle in $G$. 
{Here, we provide a deterministic polynomial time algorithm that finds a given $H$ in any suitably pseudorandom graph $G$.}
The pseudorandom graphs we consider are $(p,\lambda)$-bijumbled graphs of minimum degree which is a constant proportion of the average degree, i.e.\ $\Omega(pn)$. A $(p,\lambda)$-bijumbled graph is characterised through the discrepancy property: $\left|e(A,B)-p|A||B|\right |<\lambda\sqrt{|A||B|}$ for any two sets of vertices $A$ and $B$. Our condition $\lambda=O(p^2n/\log n)$ on bijumbledness is within a log factor from being tight and provides a positive answer to a recent question of Nenadov. 

We combine novel variants of the absorption-reservoir method, a powerful tool from extremal graph theory and random graphs. Our approach builds on our previous work (\emph{European Journal of Combinatorics} \textbf{82} (2019), 102999), incorporating the work of Nenadov (\emph{Bulletin of the London Mathematical Society} \textbf{51}  (3) (2019), pp.~421--430), together with additional ideas and simplifications.
\end{abstract}

\maketitle


\section{Introduction}

A pseudorandom graph of edge density $p$ is a deterministic graph 
which shares typical properties of the corresponding random graph $G(n,p)$. 
These objects have attracted considerable attention in computer science and mathematics. 
 Thomason~\cite{Th87a,Th87b} was the first to introduce a quantitative notion of a pseudorandom graph by defining so-called 
$(p,\lambda)$-jumbled graphs $G$ which satisfy $\left| e(U)-p\binom{|U|}{2}\right|\leq \lambda|U|$ {for every vertex subset $U\subseteq V(G)$}. 
Ever since{, there has been a great deal of} investigation {into the } properties of pseudorandom graphs  
 {and this is still a very active area of modern research}. 

The most widely studied class of jumbled graphs are the so-called $(n,d,\lambda)$-graphs, which were introduced by Alon in the 80s. These graphs have $n$ vertices, are $d$-regular and their 
second largest eigenvalue in absolute value is at most~$\lambda$. An $(n,d,\lambda)$-graph satisfies the \emph{expander mixing lemma}~\cite{AC88} allowing good control of the edges between any two sets {of vertices} $A$ and $B$:
  \begin{equation}
    \left|e(A,B)-\frac{d}{n}|A||B|\right|<\lambda\sqrt{|A||B|},\label{eq:EML}
  \end{equation}
  where 
$e(A,B)=e_G(A,B)$ denotes the number of pairs\footnote{{Note that edges
in $A\cap B$ are counted twice.}}
$(a,b)\in A\times B$ so that $ab$ is an edge of~$G$. 
An illuminating survey of Krivelevich and Sudakov~\cite{KS06} provides  a wealth  of applications.

There are three interesting regimes in the study of pseudorandom graphs and the class of $(n,d,\lambda)$-graphs is versatile enough
to capture the {essence} of all of these regimes. 
In the first{,} one assumes $\lambda=\eps n$, where $n$
is the number of vertices in a graph $G$ and $\eps>0$ is an arbitrary fixed parameter. In this regime one can control edges between sets of linear sizes. 
This is tightly connected to the theory of quasirandom graphs~\cite{CGW89} and the applications of the regularity lemma of Szemer\'edi~\cite{KSSS02}. The second regime is when $d$ is constant and $\lambda<d${. T}his class then contains (non-bipartite) expanders~\cite{HLW06} and 
 Ramanujan graphs~\cite{LPS88}, which are 
 prominent objects of study {throughout mathematics and computer science}. 
The third regime (sparse graphs) concerns  $\lambda$ being $o(n)$, often some power of $n$, where 
 one has  better control on the distribution of edges between truly smaller sets.  
 This case has been investigated more recently and made amenable to some tools from extremal combinatorics \cite{ABHKP16, ABHKP17,CFZ12,HKP18a,HKP18b, HKMP18,KRSSS07,KS06, KriSudHam,KSS04,Nen18}.

The focus of this paper will be on conditions under which certain spanning or almost
  spanning structures are forced in sparse pseudorandom graphs. Our main motivation comes from probabilistic and extremal combinatorics, in particular the problem of universality.  A graph $G$ is called $\cF$-universal for some family $\cF$ if any member $F\in \cF$ can be embedded into $G$. This problem attracted a lot of attention~\cite{ACKRRSz00, AlCa07,AlCa08,Alon10}, especially {for the case where $\cF$ is a class} of bounded degree spanning subgraphs. {In this case we say an $n$-vertex graph $G$ is $\Delta$-universal if it contains all graphs on at most $n$ vertices of maximum degree $\Delta$.} {A large part of the focus of the study has been on the} 
  universality properties of $G(n,p)$ 
  ~\cite{ACKRRSz00, DKRR14, KL14, CFNS16,FKL16,FN17}. It is also natural to investigate the universality properties of $(n,d,\lambda)$-graphs as {was }suggested by Krivelevich, Sudakov and Szab\'o in~\cite{KSS04}.
  {In this setting of sparse pseudorandom graphs, }a general result on 
  universality has been proved only recently in~\cite{ABHKP16}. Let us comment that the case of dense graphs is 
{well}  understood since  the blow-up lemma of  Koml\'os,
S\'ark\"ozy and Szemer\'edi~\cite{KSS_bl} establishes that pseudorandom graphs of linear minimum degree contain any given bounded degree spanning structure.  A little later, the second and fourth author established jointly with Allen, B\"ottcher and  H\`an in~\cite{ABHKP16}{,} a variant of a  blow-up lemma for regular subgraphs of pseudorandom graphs.
{This }provides a machinery, complementing the results of Conlon, Fox and Zhao~\cite{CFZ12} and allowing to transfer many results about dense {graphs} to sparse graphs in a unified way. {However, these results are very general and thus do not establish tight conditions for special cases of spanning structures.}

{Much more is known for questions about finding one particular spanning structure in a pseudorandom graph and the} most prominent  
spanning structures which were considered in the last fifteen years include perfect matchings, studied by Alon, Krivelevich and Sudakov
in~\cite{KS06}, Hamilton cycles studied by Krivelevich and Sudakov~\cite{KriSudHam}, clique-factors~\cite{KSS04, HKP18b, HKMP18, Nen18} and powers of Hamilton cycles~\cite{ABHKP17}.

  The 
  {problem} of when a triangle-factor\footnote{{That is, vertex-disjoint copies of $K_3$ covering all the vertices.}} appears in a given $(n,d,\lambda)$-graph 
{has been a prominent question and is an instructive insight into the behaviour of pseudorandom graphs.} 
It is easy to infer from the expander
mixing lemma that if $\lambda\le 0.1 d^2/n$, then any
$(n,d,\lambda)$-graph contains a triangle (in fact, every vertex lies
in a triangle). An ingenious construction of Alon~\cite{Alon94} provides an example of a triangle-free $(n,d,\lambda)$-graph with
 $\lambda=\Theta(n^{1/3})$ and $d=\Theta(n^{2/3})$, which is essentially as dense as possible, considering the previous comments. 
This example can be bootstrapped, as is done in~\cite{KSS04}, to the whole possible 
range of $d=d(n)${, giving $K_3$-free $(n,d,\lambda)$-graphs with $\lambda= \Theta(d^2/n)$}.
Further examples of (near) optimal dense pseudorandom triangle-free graphs have since been given \cite{Kopparty11,conlon2017sequence}. 
On the other hand, Krivelevich, Sudakov and
Szab\'o~\cite{KSS04} proved that $(n,d,\lambda)$-graphs with
$\lambda=o\left(d^3/(n^2\log n)\right)$ contain a triangle-factor
if~$3\mid n$ and they made the following intriguing conjecture, which is one of the
 central problems in the theory of spanning structures in $(n,d,\lambda)$-graphs.
\begin{conjecture}[Conjecture~7.1 in~\cite{KSS04}]
  \label{conj:KSS}
  There exists an absolute constant $c>0$ such that if
  $\lambda\le cd^2/n$, then every $(n,d,\lambda)$-graph~$G$ on
  $n\in 3\mathbb{N}$ vertices has a triangle-factor.
\end{conjecture}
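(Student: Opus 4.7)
The natural approach is the absorption--reservoir framework, which underlies the paper's near-tight result and all recent work on spanning structures in sparse pseudorandom graphs. The plan is to construct in $G$ three nearly-disjoint ingredients that together yield a triangle-factor: a random \emph{reservoir} $R\subset V(G)$ of small linear size with strong neighbourhood control, a sublinear-size \emph{absorbing structure} $A\subset V(G)\setminus R$ capable of incorporating any small (divisible-by-$3$) set of leftover vertices into triangles, and an \emph{almost-perfect triangle-tiling} of $V(G)\setminus(R\cup A)$ that covers all but $o(n)$ vertices. Combining these produces a triangle-factor as soon as one can pull the leftover vertices into the tiling via $R$ and then incorporate the final residue via $A$.

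First I would pick $R$ by independent vertex sampling of density $\mu$ for a small constant $\mu$, and use the expander mixing lemma \eqref{eq:EML} together with a union bound to ensure that every vertex has $(1\pm o(1))p|R|$ neighbours in $R$ and, crucially, every edge $uv\in E(G)$ has $(1\pm o(1))p^2|R|=\Omega(d^2/n)$ common neighbours inside $R$; the latter property is what lets us extend an arbitrary edge to a triangle by a fresh reservoir vertex. Next, for the absorber, I would construct for each $v\in V(G)$ many vertex-disjoint triangle-absorbing gadgets, each a small rigid configuration such that the gadget alone has a triangle-factor and the gadget together with $v$ also has a triangle-factor. The bijumbledness with $\lambda=O(d^2/n)$ gives, at every vertex, enough candidate gadgets to survive a Pippenger--Spencer-type nibble on an auxiliary conflict hypergraph, producing disjoint gadgets $A_v$ for many $v$ simultaneously; these are then glued into a single $A$ of size $o(n)$ with the property that $G[A\cup U]$ admits a triangle-factor for every $U\subset V(G)\setminus(A\cup R)$ with $|U|\le|R|/2$ and $3\mid|U|$.

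Finally, I would build an almost-perfect triangle-tiling on $V(G)\setminus(R\cup A)$ greedily, removing triangles as long as the remaining graph still contains $\Omega(d^3/n^2)$ triangles through every vertex, which bijumbledness guarantees while $\Omega(n)$ vertices remain. This leaves an uncovered set $L$ with $|L|=o(n)$. I then reduce $L$ by repeatedly picking two vertices in $L$ sharing a common neighbour in $R$, completing them to a triangle via that neighbour, and iterating until at most $|R|/2$ vertices are uncovered in total; these residues are absorbed by $A$ to complete the factor.

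The main obstacle is pushing the whole construction down to the conjectured threshold $\lambda=\Theta(d^2/n)$ rather than $\lambda=O(d^2/(n\log n))$. At the extremal boundary each vertex has only $\Theta(d^2/n)$ common neighbours with a typical partner and lies in only $\Theta(d^3/n^2)$ triangles, so there is essentially no slack in a union bound over the $n$ vertices when one demands the absorber gadgets simultaneously, nor when one verifies that every remaining triple in $L$ can still be rerouted through $R$. Removing the $\log n$ factor appears to require a triangle-counting lemma for $(n,d,\lambda)$-graphs that is sharp at $\lambda=\Theta(d^2/n)$, together with a more delicate conditioning on the random reservoir than the first-moment arguments that deliver the near-tight bound. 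This is precisely where current techniques seem to fall a logarithm short of Conjecture~\ref{conj:KSS}.
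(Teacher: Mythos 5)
The statement you were asked about is not a theorem of the paper at all: it is Conjecture~7.1 of Krivelevich, Sudakov and Szab\'o, quoted by the paper precisely because it is \emph{open}. The paper itself only proves results within a logarithmic factor of it (via Theorem~\ref{thm:Nenadov}, following Nenadov's $\lambda\le cd^2/(n\log n)$ bound), so there is no ``paper proof'' to match, and your proposal does not close the gap either. Indeed, your own final paragraph concedes the decisive point: the absorption--reservoir plan you outline is essentially the known strategy, and at the conjectured threshold $\lambda=\Theta(d^2/n)$ it breaks down at the step where the absorber must work for \emph{every} vertex simultaneously. Concretely, with $p=d/n$ and $\lambda=cp^2n$, bijumbledness only controls pairs of sets with $|A||B|\gtrsim(\lambda/p)^2=c^2p^2n^2$; a common neighbourhood of an edge has size $\Theta(p^2n)$, so the number of vertices with atypically small degree into such a set can be as large as $\Theta(n)$ (compare Fact~\ref{fact:irregular}, which gives the bound $4\eps^2p^2n^2/|U|$). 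Hence neither a union bound over all $n$ vertices for the gadget construction, nor the claim that every leftover pair in $L$ can be rerouted through the reservoir, can be verified from the hypothesis alone; this is exactly where the extra $1/\log n$ in $\lambda$ is consumed in Nenadov's argument and in this paper.

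So the honest assessment is: your proposal is a plausible programme, not a proof, and the missing ingredient you name yourself (a triangle-distribution/absorber argument that is sharp at $\lambda=\Theta(d^2/n)$, with no logarithmic slack for simultaneity over vertices) is precisely the open content of the conjecture. If you wanted to contribute at the level of the present paper, the realistic targets are the ones it actually establishes: an algorithmic triangle-factor (and general $2$-factor) under $\lambda\le\eps p^2n/\log n$, using the explicit template of Section~\ref{subsec: template} and the chain-matching argument of Section~\ref{Sec: Proof of Theorem Nenadov}, rather than the conjectured $\lambda\le cd^2/n$ itself.
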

This conjecture is supported by their  result \cite{KSS04} that $\lambda\le 0.1 d^2/n$ implies the existence of  a fractional triangle-factor.
Furthermore, a recent result of three of the authors~\cite{HKP18a, HKP18b}  states that, under the condition
$\lambda\le (1/600) d^2/n$, any $(n,d,\lambda)$-graph~$G$ with~$n$
sufficiently large contains a family of vertex-disjoint triangles covering all
but at most $n^{647/648}$ vertices of~$G$, thus a `near-perfect' triangle-factor. A very recent, remarkable result of 
Nenadov~\cite{Nen18} 
infers that  $\lambda\leq cd^2/(n\log n)$ for some
constant~$c>0$ is sufficient to yield a triangle-factor. { Considering the triangle-free constructions mentioned above, we see that Nenadov's result is within a $\log$ factor of the optimal conjectured bound.} 
 Nenadov also raised the question in~\cite{Nen18} of whether a similar condition 
 would imply {the existence of} any {given} $2$-factor\footnote{{A $2$-factor is a 2-regular spanning subgraph.}} {in a pseudorandom graph}. {The purpose of this work is to give a positive answer to Nenadov's question, casting the question in terms of 2-universality and showing that we can efficiently find a given subgraph of maximum degree 2 in polynomial time.}

{In order to state our result we will switch\footnote{{Nenadov also worked in this broader class of pseudorandom graphs.}} to working with } 
$(p,\lambda)$-bijumbled graphs {(}introduced in~\cite{KRSSS07}{)}, 
which give a convenient, slight variant of Thomason's jumbledness. 
Bijumbled graphs  $G$ satisfy the property: 
\begin{equation} \label{eq:EML2}
\left|e(A,B)-p|A||B|\right|<\lambda\sqrt{|A||B|}
\end{equation}
for all $A$, $B\subseteq V(G)$. In particular it 
{is easy to see by the expander mixing lemma \eqref{eq:EML} that }
an $(n,d,\lambda)$-graph is $(d/n,\lambda)$-(bi)jumbled. Moreover, {the two concepts are closely linked as}
 a $({p},\lambda)$-(bi)jumbled graph is almost $pn$-regular{, in that almost all vertices have degree close to $pn$}.
 
{Before the current paper, the best result towards 2-universality in pseudorandom graphs is due to} Allen, B\"ottcher, H\`an and two of the authors 
~\cite{ABHKP17}{. There, they proved that there exists an $\eps>0$ such} that 
 $(p,\eps p^{5/2}n)$-bijumbled graphs of minimum degree $\Omega(pn)$  contain a square\footnote{A square of a graph $H$ is obtained by connecting its vertices at distance at most two through edges.
   The existence of a square of a Hamilton cycle implies $2$-universality as one can greedily find vertex-disjoint cycles of arbitrary lengths (see e.g. \cite{fan1996hamiltonian}). } of a Hamilton cycle {and hence are $2$-universal}. The proof is algorithmic, leading to an efficient procedure.  {Here we weaken the requirement on $\lambda$ to match that of Nenadov and obtain the following.}
 
 \begin{theorem}\label{thm:main}
{For all $\delta>0$, t}here exist constants $\eps>0$ and $n_0$ such that, for any $p\in(0,1]$, the following holds. For any $n\ge n_0$ and any given potential $2$-factor $F$ (that is, family of disjoint cycles whose lengths sum up to $n$),
 there is a polynomial time algorithm which finds a copy of $F$ 
 in any $(p,\lambda)$-bijumbled graph $G$ on $n$ vertices with $\lambda\le \eps p^2n/\log n$ 
and minimum degree $\delta(G)\ge {\delta}pn$.
\end{theorem}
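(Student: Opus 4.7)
\medskip
\noindent\textbf{Proof plan.} The overall strategy is an absorption-reservoir argument, following the philosophy of Nenadov's triangle-factor argument and the authors' earlier work on squares of Hamilton cycles. Write $F$ as a disjoint union of cycles $C_{\ell_1},\dots,C_{\ell_k}$ with $\sum\ell_i=n$. We first set aside a reservoir $R$ of size $o(n)$ and, for each prescribed cycle $C_{\ell_i}$ of length at least some threshold (short triangles/quadrilaterals need separate handling), an absorbing gadget $A_i$ consisting of a short path with endpoints attached to $R$ and with the robust property that, for any sufficiently small set $X$ of unused vertices, $A_i\cup X$ can be rerouted into a path on $V(A_i)\cup X$ between the same two endpoints. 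The remaining vertices $M:=V(G)\setminus(R\cup\bigcup_i A_i)$ are then used to extend each gadget $A_i$ into a path of the correct length in $M\cup A_i$, which we finally close up through $R$ into the required cycle.

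The reservoir and absorber are produced by a standard pseudorandom counting/degree argument: using the bijumbledness \eqref{eq:EML2} we show that a uniformly random vertex subset of size $\mu pn$ (for suitable $\mu\ll\delta$) inherits quasirandomness with parameter $\lambda$, has typical degrees $(1\pm o(1))\mu p^2 n$ into the rest, and thus can host both $R$ and the collection of absorbing gadgets simultaneously. The constructive derandomisation is straightforward via the method of conditional expectations. In the main embedding phase, once the $A_i$'s are fixed, we embed the cycles $C_{\ell_i}$ by extending each $A_i$ in $M$: repeatedly grow a path one edge at a time from the current endpoint into $M$ while preserving that the set of remaining unused vertices retains a good degree profile. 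When the path in $M$ is of the right length, we close it through $R$ to the other endpoint of $A_i$. The reservoir $R$ is used in the standard way: by the bijumbledness, after each closing step almost every pair of vertices in $R$ still has $\Omega(p^2|R|)$ common neighbours in the reservoir, so we can always find a short connecting path there.

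The crux of the argument, and the step that forces the condition $\lambda=O(p^2n/\log n)$, is the construction of absorbers able to handle leftover vertices of arbitrary position, rather than only those lying in a fixed pool. Here we follow Nenadov's distributive/greedy absorption idea: for each vertex $v\in V(G)$ we must collect $\Omega(\log n)$ \emph{absorbing configurations} (short gadgets that could swallow $v$ in place of some internal vertex while preserving the endpoints of the path) and then use the Lov\'asz Local Lemma or a constructive variant (Moser--Tardos, in order to keep the algorithm polynomial) to select a sublinear collection of gadgets such that every vertex is absorbable by many of them and no two gadgets overlap. The bijumbledness of $G$ with $\lambda\le \eps p^2n/\log n$ just suffices to guarantee that each vertex lies in the required $\Omega(\log n)$ absorbing configurations and that the bad events in the local lemma have polynomially bounded dependencies: the $\log n$ factor is what compensates for the logarithmic union bound in the local lemma step. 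The eventual leftover in $M$ after the main embedding will be a set of at most $|\bigcup A_i|$ vertices, and by the absorbing property each absorber $A_i$ swallows its allocated share, yielding the desired copy of $F$.

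The main obstacle is precisely this absorber construction at the near-optimal pseudorandomness threshold: one needs simultaneously (i)~a single gadget structure flexible enough to absorb any vertex of $G$ (not just vertices in a designated pool) while producing a fixed-length path, and (ii)~a concentration/local-lemma argument whose parameters match $\lambda\le \eps p^2n/\log n$. Handling the short cycles of $F$ (e.g.\ many triangles) requires a separate case distinction since the absorber-in-a-cycle approach does not directly apply; we deal with these by grouping the short cycles together into a single absorbing structure with multiple reservoir attachment points, exploiting again the common-neighbour estimate afforded by bijumbledness. Once the absorber is in place, every remaining step (path extension, reservoir connection, derandomisation) is routine and runs in polynomial time.
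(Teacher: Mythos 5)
Your plan has a genuine gap at exactly the point where the theorem is hard. A path--extension/reservoir scheme of the kind you describe can only ever treat cycles of $F$ that are long enough to contain an absorbing gadget, a greedily grown path segment and a reservoir connection; it says nothing about a $2$-factor consisting entirely of triangles (or of $C_4$'s), which is the case that forces the condition $\lambda\le \eps p^2n/\log n$ in the first place. You acknowledge this and propose to ``group the short cycles into a single absorbing structure with multiple reservoir attachment points'' selected via the local lemma/Moser--Tardos, with each vertex lying in $\Omega(\log n)$ absorbing configurations. But this is precisely the open difficulty, not a routine step: finding a triangle-factor in a $(p,\lambda)$-bijumbled graph at $\lambda=\Theta(p^2n/\log n)$ is the content of Nenadov's Theorem~\ref{thm:Nenadov}, whose proof requires a delicate cascading structure of $K_4^-$-chains; a per-vertex count of absorbing configurations followed by an LLL-type disjoint selection is not known to work at this density, and you give no argument that it does (note also that the $\log n$ loss in Theorem~\ref{thm:Nenadov} does not come from a union bound over $\log n$ gadgets per vertex --- every vertex already lies in $\Omega(p^3n^2)\gg\log n$ triangles once $\lambda\le\eps p^2n$). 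The paper does not reprove this either: it splits $F$ into triangles, cycles of length in $[4,L]$, and long cycles, partitions $V(G)$ accordingly (derandomised via Theorem~\ref{thm:derandomised partition}), quotes Nenadov for the triangle part (supplying only an algorithmic replacement for his one non-constructive lemma), and handles the bounded-length cycles with an explicit Montgomery-type template absorber built from Ramanujan graphs (Lemma~\ref{lem:bipartite template}, Lemma~\ref{lem:absorbing_structure}) --- none of which your sketch supplies or circumvents.

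A secondary problem is your reservoir of size $\mu pn$. Bijumbledness with $\lambda\le\eps p^2n/\log n$ only controls $e(A,B)$ when $\sqrt{|A||B|}\gg\lambda/p=\eps pn/\log n$; typical degrees into your reservoir are only $\Theta(p^2n)$, so connecting two prescribed vertices through $R$ requires edges between neighbourhoods of size $O(p^2|R|)=O(p^3n)$, and for $p$ polynomially small (recall $p$ may be as small as $n^{-1/3}$ by Proposition~\ref{prop:lower_bd}) the discrepancy bound \eqref{eq:EML2} gives nothing there, even with longer connecting paths. This is why the paper keeps all flexible/connecting sets of linear size ($W$, $Z_1$, the set $U$ in Lemma~\ref{lem:connections}). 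So even for the long cycles your scheme would need to be reworked along those lines, and for the short cycles the central argument is simply missing.
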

 
In particular, Theorem \ref{thm:main} implies that such a $(p,\lambda)$-bijumbled graph $G$ is $2$-universal. Indeed, given a graph  $F'$ on  at most $n$ vertices with $\Delta(F')\le 2$, we find a supergraph $F$ of $F'$ on $n$ vertices, so that all but at most one of the components of $F$ are cycles. It is possible that $F$ may have either one isolated vertex or a single edge but since we can easily embed a single vertex/edge into a bijumbled graph $G$ altering its minimum degree only a little, it suffices to concentrate on the case that $F$ is a $2$-factor.  

We remark that the minimum degree condition in Theorem \ref{thm:main} is weak and natural. Indeed some minimum degree condition is necessary as otherwise one could have isolated vertices and the bijumbled definition \eqref{eq:EML2} guarantees that almost all vertices satisfy the minimum degree condition in any case. Finally, we mention that we do not try to optimise the running time of our algorithm and are satisfied with being able to provide a deterministic algorithm which is efficient in that it runs in polynomial time. Indeed, the problem of establishing the existence of certain 2-factors (e.g. for triangle-factors  \cite{kirkpatrick1983complexity} and Hamilton cycles \cite{karp1972reducibility}) in graphs is known to be NP-complete  and many proofs of existence of spanning structures in certain graph classes adopt probabilistic methods. 
 
 \subsection{Proof method}

 {Our proof uses} the absorption-reservoir method, {which has been a powerful tool in proving the existence of certain spanning structures and is }often superior to the aforementioned blow-up lemmas. The basic idea of the method is to carefully define an `absorbing structure' which can contribute to the desired spanning structure in many ways. One then finds such an absorbing structure in the host (hyper-)graph and, after putting this to one side, finds almost all of the desired spanning structure in the remainder of the host graph. The absorbing structure then provides the flexibility to `clean up' and complete the  spanning structure. 
The beginnings of this method date back to the early 90s, but the breakthrough {in}
 the wide applicability {of these methods, however, was} 
 first established by R\"odl, Ruci\'nski and Szemer\'edi~\cite{RRSz06,RRSz08} 
 in their study of Hamiltonicity in hypergraphs. 
 {There, the method was used to study dense hypergraphs but the methods have since been adapted to other settings (see e.g.~\cite{KO12, ABKP15, ABHKP17}).}  

In his work on spanning trees in random graphs~\cite{M14a,M19}, Montgomery ingeniously wove sparse `robust' bipartite graphs (which we call \emph{sparse templates}) 
into the absorption-reservoir method. 
The first use of sparse templates for the absorption in the context of pseudorandom graphs was recently given by the current authors in~\cite{HKMP18}.
 Here, we again use this idea and introduce for the first time, an efficient version of this new type of absorption, which may be of independent interest. In order to explicitly generate a sparse template we use bounded degree bipartite graphs with strong expansion properties. Such graphs are known as \emph{concentrators}~\cite{ACKRRS01,HLW06}. 

Our general proof approach here builds on the ideas from our paper~\cite{HKMP18}, derandomising additional arguments at various places and adapting the method to handle different types of $2$-factors. 
In order to deal with triangles we also require a different
absorbing-type argument, namely, the argument due to
Nenadov~\cite{Nen18}, for which we replace certain nonalgorithmic
arguments.

 \section{Proof of Theorem~\ref{thm:main}} \label{Sec: Proof overview}

The following three theorems will establish our main result. 
Note that the non-algorithmic version of Theorem \ref{thm:Nenadov} was proved in \cite{Nen18}.

\begin{theorem}[Theorem 1.2, \cite{Nen18}]\label{thm:Nenadov}
For every $\delta>0$ there exists a constant $\eps>0$ such that, for any $p\in(0,1]$, a $(p, \lambda)$-bijumbled graph $G$ on $n\in 3\NN$ vertices with $\lambda\le \eps p^2n/\log n$ and  minimum degree $\delta(G)\ge \delta pn $ contains a triangle-factor, which can be found with a deterministic polynomial time algorithm.
\end{theorem}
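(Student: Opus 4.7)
My plan is to implement the absorption--reservoir method behind Nenadov's proof in an explicitly algorithmic way. The strategy proceeds in four stages: (i) construct an absorber $A\subseteq V(G)$; (ii) set aside a reservoir $R\subseteq V(G)\setminus A$; (iii) cover $V(G)\setminus (A\cup R)$ almost perfectly by vertex-disjoint triangles; (iv) clean up the small leftover using $R$ and then $A$, in that order, to produce a triangle-factor.

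For stage (i), I would build an absorbing set $A$ of size $o(n)$ enjoying the robustness property that, for every sufficiently small set $L\subseteq V(G)\setminus A$ with $|A\cup L|\in 3\NN$, the induced subgraph $G[A\cup L]$ admits a triangle-factor. Following our earlier construction in \cite{HKMP18}, I would use a Montgomery-style sparse template: an explicit bounded-degree bipartite expander (a concentrator, cf.\ \cite{HLW06,ACKRRS01}) supplies the combinatorial robustness, and each template edge is realised by embedding a small absorbing gadget (a constant-sized configuration of triangles enabling a vertex swap) into $G$. These gadgets are placed greedily, with the bijumbledness property \eqref{eq:EML2} guaranteeing enough common neighbours at every step; the regime $\lambda\le \eps p^2 n/\log n$ is calibrated so that, across the $\Theta(n/\log n)$ template vertices, each gadget search still succeeds even after accounting for previously used vertices.

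For stage (ii), I would pick $R$ of size $\Theta(n/\log n)$ so that every $v\in V(G)\setminus R$ lies in $\Omega(p^2|R|^2)$ triangles with both other endpoints in $R$. Starting from a random subset of the right density, the method of conditional expectations applied to the $O(n)$ bad events---each controlled by \eqref{eq:EML2}---produces such an $R$ deterministically in polynomial time. For stage (iii), a polynomial-time almost-triangle-factor covering all but $o(|R|)$ vertices of $V(G)\setminus (A\cup R)$ is available from the authors' earlier work \cite{HKP18a,HKP18b}. In stage (iv) the small uncovered set is swept into triangles that each use two vertices of $R$, greedily; the triangle-richness of $R$ guarantees this succeeds, leaving a residual set $L$ inside the absorbing domain of $A$, at which point the absorber property completes the triangle-factor.

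The main obstacle is stage (i): Nenadov's original absorber argument is probabilistic and the naive union bound is not directly algorithmic. The remedy is twofold. First, replace the random template by an explicit constant-degree concentrator, so that the expansion properties required for the absorption are present by construction rather than asserted by a union bound. Second, embed the gadgets one at a time, reapplying \eqref{eq:EML2} at each step with the sharp bound $\lambda\le \eps p^2n/\log n$ to show that the vertices blocked by previously placed gadgets still leave a valid common-neighbourhood to work with. Because the template has bounded degree, each vertex participates in only $O(1)$ gadget embeddings, and a careful accounting of ``used'' vertices keeps the greedy procedure polynomial-time; this also derandomises the auxiliary selection steps that appear in \cite{Nen18}.
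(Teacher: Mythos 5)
Your plan hinges on a one-shot, Montgomery-style absorber for \emph{triangles} built from constant-size gadgets that are embedded greedily using \eqref{eq:EML2}, and this is exactly where the argument breaks in the sparse regime. Any such gadget that lets the absorber ``accept'' a vertex $v$ (or a flexible-set vertex of the template) must contain a triangle through $v$ whose remaining edge lies inside a prescribed small set --- typically an edge inside a common neighbourhood $N(a)\cap N(z)$ of two already-embedded vertices, restricted to the unused part of the absorber. Bijumbledness only guarantees an edge inside a set $S$ when $|S|>\lambda/p$, i.e.\ $|S|\gtrsim \eps pn/\log n$ here, while the common neighbourhoods available to you have size $O(p^2n)$ (and only $O(p^2|A|)=O(p^2n/\log n)$ if you insist on staying inside an absorber of size $o(n)$, or $\Theta(n/\log n)$ template vertices as you propose). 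So the greedy gadget search succeeds only when $p^2n\gtrsim pn/\log n$, i.e.\ $p=\Omega(1/\log n)$, and fails throughout the genuinely sparse range $p$ down to roughly $n^{-1/3}$ that the theorem covers. The assertion that ``the regime $\lambda\le\eps p^2n/\log n$ is calibrated so that each gadget search still succeeds'' is therefore not justified; the calibration works for paths/degree conditions (which is why this template machinery is used in the paper only for cycles of length at least $4$, cf.\ Lemma \ref{lem:absorbing_structure}, where $\ell\ge 4$ is essential), but not for the edge-inside-a-neighbourhood step that triangles force. The same obstruction is the reason Nenadov's bound carries the $\log n$: his absorber is not a per-vertex gadget construction but an \emph{iterated} structure of $\ell$-chains built from copies of $K_4^-$, whose lengths are halved over $\Theta(\log n)$ rounds, each round using flexible sets of removable vertices of size $\Omega(\lambda/p^2)$ so that leftover vertices can always close a traversing triangle; summing over the rounds is what produces the $n/\log n$ budget. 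A one-shot template cannot obviously be substituted for this without strengthening the hypothesis to something like $\lambda=O(p^3n)$.

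For contrast, the paper does not reprove Nenadov's theorem at all: it observes that his proof is already algorithmic except for one step, namely his Lemma 3.5, where Haxell's hypergraph Hall-type theorem is invoked non-constructively, and it replaces only that step by an explicit greedy argument that matches $(\ell/2)$-chains to $\ell$-chains via traversing triangles, using \eqref{eq:EML2} and a count of low-degree vertices. So the algorithmic content needed is far smaller than a full re-derivation. Two further, more minor, points about your sketch: the near-perfect triangle factor you import from \cite{HKP18a,HKP18b} is stated for $(n,d,\lambda)$-graphs rather than bijumbled graphs with a minimum-degree hypothesis, so it cannot be cited verbatim; and your stage (iv) needs every possible leftover vertex to have degree $\Omega(p|R|)$ into $R$ with $|R|=\Theta(n/\log n)$, which \eqref{eq:EML2} does not give for all vertices (up to $\Theta(\eps^2p^2n/\log n)$ vertices can fail it), so these exceptional vertices must be handled separately before the greedy sweep --- fixable, but currently unaddressed.
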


\begin{theorem}\label{thm:const_cycles}
For every $\delta>0$ and $L\in \NN$ there exist constants $\eps_0=\eps_0(\delta,L)>0$ and $n_0=n_0(\delta,L)$ such that for any $0<\eps<\eps_0$ the following holds.
Let $G$ be a $(p, \lambda)$-bijumbled graph on $n\ge n_0$ vertices with $p\in(0,1/2]$, $\lambda\le \eps p^2n$ and minimum degree $\delta(G)\ge \delta pn $.
Then in polynomial time, one can find any family of vertex-disjoint cycles with lengths in the interval $[4,L]$ whose lengths sum up to at most $n$.
\end{theorem}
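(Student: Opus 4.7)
The plan is to process the cycles one at a time via a greedy algorithm, with a small pre-reserved buffer $R$ to handle the tail of the procedure. Order the input cycles $C_{\ell_1},\ldots,C_{\ell_k}$ arbitrarily (say by decreasing length). Using~\eqref{eq:EML2} together with a standard derandomisation (e.g.\ the method of conditional expectations applied to the natural indicators for a uniformly random choice), we first construct a set $R\subseteq V(G)$ of size $\lfloor \eps_0^{1/2} n\rfloor$ such that $G[V\setminus R]$ has minimum degree $\Omega(pn)$ and remains $(p,\lambda)$-bijumbled (bijumbledness is inherited automatically by induced subgraphs). We then set $V_0=V\setminus R$ and, at each step $i$, find a copy of $C_{\ell_i}$ inside $G[V_{i-1}]$ and define $V_i=V_{i-1}\setminus V(C_{\ell_i})$.

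The single-cycle subroutine finds $C_\ell$ inside any sufficiently large bijumbled induced subgraph $G'=G[V']$ by path extension. We pick a vertex $v_1\in V'$ of near-typical degree in $G[V']$, iteratively choose $v_{j+1}\in (N(v_j)\cap V')\setminus\{v_1,\ldots,v_j\}$ of near-typical degree for $j=1,\ldots,\ell-2$, and close the cycle by selecting $v_\ell\in N(v_1)\cap N(v_{\ell-1})\cap V'$. Each extension is justified by applying~\eqref{eq:EML2} with $A=\{v_j\}$ to see that $|N(v_j)\cap S|=p|S|\pm\lambda\sqrt{|S|}$ for every $S\subseteq V'$, and the closing step by applying~\eqref{eq:EML2} to $A=N(v_1)\cap V'$ and $B=N(v_{\ell-1})\cap V'$ (each of size $\Omega(p|V'|)$) to obtain $|N(v_1)\cap N(v_{\ell-1})\cap V'|=\Omega(p^2|V'|)$. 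These estimates are positive so long as $|V'|$ is sufficiently large relative to $\lambda/p^2=O(\eps n)$, which holds throughout the greedy phase because $|V_{i-1}|\ge|R|=\Omega(\eps_0^{1/2}n)\gg\eps n$.

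The main obstacle is the tail of the procedure: when $\sum_j\ell_j$ is close to $n$, the greedy phase eventually exhausts $V\setminus R$, and at that point we replace the working set $V_{i-1}$ by $V_{i-1}\cup R$ and continue with the same subroutine. The buffer is sized so that $|R|=\Omega(\eps_0^{1/2}n)\gg\eps n$, which still meets the pseudorandom hypothesis needed by the subroutine. The most delicate part of the proof is to construct $R$ in such a way that, simultaneously, $G[V\setminus R]$ is good enough to support the greedy phase and $G[V_{i-1}\cup R]$ remains good enough to support the tail phase for every admissible input of cycle lengths. Provided this is done, the overall algorithm is deterministic, runs in polynomial time (both the buffer construction and each of the at most $n/4$ single-cycle searches are polynomial), and succeeds for every family of cycle lengths summing to at most $n$.
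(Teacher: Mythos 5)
There is a genuine gap, and it is the heart of the problem: the endgame when the cycle lengths sum to (essentially) $n$. Since the lengths may sum up to $n$, the hard instance is a \emph{perfect} tiling of $V(G)$ by cycles of bounded length, and a greedy scheme with a buffer cannot finish such an instance. Your subroutine explicitly needs the working set $V'$ to be large compared with $\lambda/p^2=O(\eps n)$, and indeed for sets of size $O(\lambda/p)=O(\eps pn)$ the bijumbledness condition \eqref{eq:EML2} gives no information at all --- such a set may span no edges whatsoever. But in the tail phase the set $V_{i-1}\cup R$ is itself whittled down to the last cycle's worth of vertices, a set of size at most $L$, and nothing guarantees (and in general it will be false) that these particular leftover vertices span a cycle of exactly the required length; a greedy process gives you no control over \emph{which} vertices remain. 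Your sentence ``the most delicate part of the proof is to construct $R$ in such a way that \dots'' concedes exactly the step that is missing, and no choice of a static buffer can supply it: what is needed is a structure that can flexibly absorb an \emph{arbitrary} small leftover set into bounded-length cycles. This is why the paper builds a template-based absorbing structure (Lemma~\ref{lem:absorbing_structure}, Fact~\ref{fact:absorbing}), pre-treats the few vertices with low degree into the flexible set $Z_1$, runs the greedy phase only down to $\approx 2^{\ell}\eps n$ uncovered vertices, and then uses the connection lemma (Lemma~\ref{lem:connections}) plus the template's matching property to finish exactly. Your proposal, as written, proves at best an almost-spanning statement (all cycles covering at most $(1-c)n$ vertices), not the theorem.

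A secondary but real error: you justify each path extension ``by applying \eqref{eq:EML2} with $A=\{v_j\}$''. With $|A|=1$ the error term $\lambda\sqrt{|S|}\le \eps p^2 n\sqrt{|S|}$ exceeds the main term $p|S|$ unless $|S|>\eps^2p^2n^2$, which by Proposition~\ref{prop:lower_bd} (where $p\gg n^{-1/2}$) never happens for $|S|\le n$; bijumbledness says nothing about individual neighbourhoods. The correct route, used in the paper, is to bound the \emph{number} of vertices with small degree into a given set (Fact~\ref{fact:irregular}) and to choose the second endpoint of a prospective cycle with large degree into the neighbourhood of the first before building the connecting path (Fact~\ref{fact:cyclecount}); your extension-then-close order also needs this foresight to produce a common neighbour for the closing vertex. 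These local issues are repairable along the lines of Lemma~\ref{lem:long_paths} and Fact~\ref{fact:cyclecount}, but the absorption step is the missing idea that cannot be avoided.
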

\begin{theorem}\label{thm:long_cycles}
For every $\delta>0$ there exist constants $L \in \NN$, $\eps_1>0$ and $n_0$ such that the following holds. 
For any $p\in(0,1/3]$ and $0<\eps<\eps_1$, let $G$ be a $(p, \lambda)$-bijumbled graph on $n\ge n_0$ vertices with $\lambda\le \eps p^2n$ and  minimum degree $\delta(G)\ge \delta pn $.
Then in polynomial time, one can find any family of vertex-disjoint cycles with lengths in the interval $[L+1,n]$ whose lengths sum up to at most $n$.
\end{theorem}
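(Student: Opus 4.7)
The plan is to run an absorption-reservoir argument for each prescribed cycle simultaneously, exploiting the fact that each target length $\ell_i \ge L+1$ leaves ample room for absorbing gadgets. Fix a large constant $L = L(\delta)$ and constants $\alpha, \eta \ll \delta$. First, I would build, for each $i$, an absorbing path $A_i$ of length at most $\alpha \ell_i$ with two distinguished endpoints, using a sparse bipartite template in the style of Montgomery and our earlier paper \cite{HKMP18}. The template would be realised via a constant-degree bipartite concentrator, for which explicit deterministic constructions are available (see~\cite{ACKRRS01, HLW06}). The crucial property of $A_i$ is a \emph{flexibility window}: for any leftover set $U$ of size at most $\eta n$ avoiding $A_i$, and any $U' \subseteq U$ with $|U'|$ in a prescribed interval, the path $A_i$ can be extended to a path that absorbs exactly the vertices of $U'$. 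In parallel, I would fix a deterministic reservoir $R \subseteq V(G) \setminus \bigcup_i V(A_i)$ with $|R| = o(n)$ such that every vertex outside $R$ has $\Omega(p|R|)$ neighbours in $R$; this can be obtained by derandomising the standard random choice via the conditional expectation method, using that the relevant properties concentrate well under $(p, \eps p^2 n)$-bijumbledness.

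Next, on the subgraph $G'$ induced on $V(G) \setminus (R \cup \bigcup_i V(A_i))$ I would find, in polynomial time, a collection of $O(1)$ vertex-disjoint paths covering all but at most $\eta n$ vertices, using a P\'osa-type rotation-extension algorithm that works efficiently in $(p, \eps p^2 n)$-bijumbled graphs of linear minimum degree. Each target cycle $C_i$ is then assembled by linking an appropriate selection of these long paths with the absorber $A_i$ through short connecting paths in $R$, provided by the standard connecting lemma: any two vertices with $\Omega(p|R|)$ neighbours in $R$ can be joined by a path of length $O(\log n/\log(pn))$ entirely inside $R$. Finally, the flexibility windows of the $A_i$ are invoked simultaneously to absorb the remaining uncovered vertices into the appropriate cycles and to tune their lengths to exactly $\ell_i$.

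The hard part will be realising exact cycle lengths $\ell_i$ once all the sources of ``slack'' are accounted for: connecting paths through $R$ contribute small but not entirely predictable lengths, the numbers of leftover vertices assigned to the different absorbers must sum to the correct total, and each absorber must come with a flexibility window wide enough to swallow any admissible leftover while also correcting any $O(\log n / \log(pn))$ drift from the connecting phase. Executing this distribution-and-correction step at the bijumbledness threshold $\lambda \le \eps p^2 n$, where the discrepancy bound only just beats the expected edge count on sets of size roughly $\eps p n$, is the delicate balancing act, and the algorithmic requirement forces every ingredient --- reservoir selection, template embedding, long-path covering, connecting, and final length adjustment --- to be realised by a deterministic polynomial-time procedure rather than a probabilistic one, which is the main new difficulty compared to the non-algorithmic analogues in \cite{HKMP18, Nen18}.
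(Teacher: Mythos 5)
Your proposal has the right general flavour (template-based absorption plus connections plus greedy long paths), but two of its central design choices create gaps that the sketch does not close. First, the per-cycle absorbers: the target cycles may all have constant length (anything in $[L+1,n]$ is allowed, so there can be $\Theta(n)$ cycles of length $L+2$), and then each $A_i$ of size at most $\alpha\ell_i$ is of constant size. In a $(p,\eps p^2n)$-bijumbled graph $p$ can be as small as roughly $n^{-1/3}$, so a typical vertex has \emph{no} neighbours in a constant-size absorber; the claimed ``flexibility window'' property --- that $A_i$ can swallow an arbitrary subset $U'$ of an arbitrary leftover set $U$ disjoint from $A_i$ --- is therefore unattainable for such $A_i$, and no amount of coordination between $\Theta(n)$ windows fixes this. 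The standard template mechanism only lets you delete an arbitrary $m$-subset of a \emph{pre-specified} flexible set inside the absorber, not absorb arbitrary external vertices, so leftover vertices must first be routed into the structure by connections; your sketch conflates these two things. Second, you rely on connecting paths of length $O(\log n/\log(pn))$, which makes the final lengths of the cycles ``not entirely predictable,'' and you explicitly leave the resulting exact-length bookkeeping as ``the hard part.'' That is precisely the crux of the theorem, so the proposal as written has its main difficulty unresolved rather than merely delicate.

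The paper resolves both points with one device. It builds a \emph{single global} absorbing structure for $C_4$'s (Lemma~\ref{lem:absorbing_structure}) with flexible set $Z_1$ of size $2m$, and then reinterprets it: each potential $C_4$ on $\{y,a_h,b_h,c_h\}$ is opened into the length-three path $a_hyc_hb_h$, so the length-two paths $Q_h=a_hb_hc_h$ of the structure become path segments that are distributed among the $t$ cycles according to prescribed counts $q_{ij}$. All connections are made by the multiple connection lemma (Lemma~\ref{lem:connections}) with paths of \emph{fixed} length three, and the bulk of each cycle is a greedily found path of an exactly prescribed length (Lemma~\ref{lem:long_paths}); leftover vertices and the low-degree exceptional vertices $V_0$ are inserted via two fixed-length connections each. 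Consequently every length is exact by construction, and the only flexibility needed is which $m$ vertices of $Z_1$ remain unused, which the template matching (Fact~\ref{fact:absorbing}) handles in polynomial time. If you replace your per-cycle absorbers and logarithmic-length connections by this global structure and fixed-length connections, the distribution-and-correction step you flagged disappears, since it reduces to choosing the integers $q_{ij}$ in advance.
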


Now we can quickly derive Theorem~\ref{thm:main}.

\begin{proof}[Proof of Theorem~\ref{thm:main}]
We consider three (not mutually exclusive) cases:
\begin{enumerate}[label=($\roman*$)]
\item there is subset of at least $n/2$ vertices of $F$ which induce a collection of vertex-disjoint triangles in $F$,
\item there is a subset of at least $n/4$ vertices of $F$ which induce a collection of vertex-disjoint cycles  with lengths in the interval $[4,L]$ where $L$ is some absolute constant determined by Theorem \ref{thm:long_cycles} above, 
\item there is a vertex subset of at least $n/4$ vertices of $F$ which induce a collection of vertex-disjoint cycles with lengths in the interval $[L+1,n]$.
\end{enumerate}

For a given $2$-factor $F$ on at most  $n$ vertices, we are in one of the three cases defined above. Let $F_1$, $F_2$ and $F_3$ denote the subgraphs of $F$, so that all triangles constitute $F_1$, all cycles with lengths in $[4,L]$ constitute the subfamily $F_2$ and all cycles of length at least $L+1$ are $F_3$. We set 
 $n_i:=v(F_i)$ for each $i\in [3]$. 

If we are in the first case ($n_1\ge n/2$) we  partition the vertex set $V$ of $G$ into three parts $V_1\dcup V_2\dcup V_3$, so that $|V_1|=n/2$ and $|V_3|=|V_4|=n/4$ and each $G[V_i]$ remains a $(p, \lambda)$-bijumbled graph. Moreover,  every vertex $v\in V$ satisfies $\deg(v,V_i)\ge\delta p|V_i|/2$ for any $i\in[3]$. Clearly, 
one could achieve this via a random partition and it will be possible to derandomise this approach (see  Corollary~\ref{cor:partition}).  
If $n_2\ge n_3$, then we 
first apply Theorem~\ref{thm:long_cycles} to embed $F_3$ via some embedding $\varphi_3$  into $G[V_3]$. Then Theorem~\ref{thm:const_cycles} asserts that $F_2$ can be embedded into $G_2':=G[(V_2\dcup V_3)\setminus \varphi_3(V(F_3))]$, since $G'_2$ is itself a $(p, \lambda)$-bijumbled graph with minimum degree at least  $\delta pn/4$. Finally, we apply Theorem~\ref{thm:Nenadov} to embed $F_1$ into the remaining graph (which is again $(p, \lambda)$-bijumbled graph with minimum degree at least $\delta pn/2$). If $n_3\ge n_2$ then we first embed $F_2$, then $F_3$ and, finally, $F_1$. The other cases $n_2\ge n/4$ and $n_3\ge n/4$ are treated analogously. 
\end{proof}

\subsection{Structure of the paper}
It remains to prove Theorems~\ref{thm:Nenadov} -- ~\ref{thm:long_cycles}.
We will only consider the case $p\le 1/3$, since the dense case can be treated fairly easily by the algorithmic version of the blow-up lemma due to Koml\'os,
S\'ark\"ozy and Szemer\'edi~\cite{KSSz98}.   
In Section \ref{Sec: Auxiliary results} we collect some notation and useful tools and algorithms for our study. 
 In the subsequent two sections we prove the first two theorems (Theorems~\ref{thm:const_cycles} and~\ref{thm:long_cycles}) and in  Section~\ref{Sec: Proof of Theorem Nenadov} we replace one non-algorithmic argument from \cite{Nen18} with a constructive proof. 
 
 Throughout we use the shorthand $(p,\lambda)$-graphs to refer to $(p,\lambda)$-bijumbled graphs, we write $\log$ to denote the natural logarithm and we omit floor and ceiling signs in order not to clutter the arguments. The final section closes with some problems left for further study.

\section{Auxiliary results} \label{Sec: Auxiliary results}

\subsection{Simple statements about $(p, \lambda)$-bijumbled graphs}
 In this section, we collect some useful properties of $(p, \lambda)$-graphs. We will use the following notation. Given a graph $G=(V,E)$, we denote by $\deg(v,U)$ the number of neighbours of $v\in V$ in $U\subseteq V$. A $u$-$v$-path  is a path $P$ with end vertices $u$ and $v$, and we call the other vertices of  $P$ {the} \emph{inner vertices}. For vertex subsets $A,B$, an $A$-$B$ path is a $u$-$v$ path for some vertices $u\in A$ and $v \in B$. The length of a path is the number of its edges.  Finally we will denote by $\Clm$, the graph that consists of a path $P$ of length $\ell-2$, whose end vertices have exactly $K$ distinct common neighbours outside of $V(P)$, for some $K \in \NN$. 
We start with the following remark which follows directly from the definition (\ref{eq:EML2}).

\begin{remark} \label{rem:easy bijumbled}
If $\eps>0$ and $A$ and $B$ are subsets of a $(p, \lambda)$-graph with $\lambda \leq \eps p^2n$, such that $|A||B|\ge 4\eps^2p^2n^2$, then $e(A,B)\ge \frac{p|A||B|}{2}$.
\end{remark}

Next, we show that bijumbled graphs cannot be too sparse.

\begin{proposition}\label{prop:lower_bd}
Given $\eps\in(0,1)$, there exists $n_0\in \NN$ such that 
if $G=(V,E)$ is a $(p, \lambda)$-graph on $n\geq n_0$ vertices with $\lambda\le \eps p^2n$ {and $\eps p\le 1/2$}, then 
$p\ge (\eps^2n)^{-1/3}/4$.
\end{proposition}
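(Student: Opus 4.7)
The plan is to apply the bijumbledness condition~\eqref{eq:EML2} twice, to two carefully chosen pairs of test sets: first globally to extract a vertex of roughly average degree, and then locally to exploit bijumbledness on that vertex's neighbourhood.

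First I will take $A=B=V(G)$, which gives $|2|E(G)|-pn^2|<\lambda n$. Using $\lambda\le\eps p^2n$ together with the hypothesis $\eps p\le 1/2$ converts the additive error into $\lambda n\le pn^2/2$, so $|E(G)|>pn^2/4$. By averaging, this produces some vertex $v$ with $\deg(v)\ge pn/2$.

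The second step is to apply bijumbledness to $A=\{v\}$ and $B=N(v)$. Since every vertex of $B$ is a neighbour of $v$, the edge count $e(A,B)$ is exactly $\deg(v)$, and the bijumbledness inequality rearranges to $\sqrt{\deg(v)}\,(1-p)<\lambda$. I can discard the trivial regime $p>1/2$ (where the conclusion $p\ge(\eps^2n)^{-1/3}/4$ holds for all sufficiently large $n$), so in the remaining case $1-p\ge 1/2$ gives $\deg(v)<4\lambda^2\le 4\eps^2p^4n^2$. Combining with $\deg(v)\ge pn/2$ produces $pn/2<4\eps^2p^4n^2$, equivalently $p^3>1/(8\eps^2n)$, and hence $p>(\eps^2n)^{-1/3}/2\ge(\eps^2n)^{-1/3}/4$.

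The proof is essentially an exercise in choosing the right test sets, so there is no serious obstacle; the only minor subtlety is making sure the factor $1-p$ appearing after the second application of bijumbledness is bounded away from zero, which forces the harmless case split $p\le 1/2$ versus $p>1/2$. The threshold $n_0$ will simply be chosen large enough (in terms of $\eps$) that the trivial case $p\ge 1/2$ already implies $p\ge(\eps^2n)^{-1/3}/4$.
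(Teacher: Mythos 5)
Your proof is correct, but it takes a genuinely different route from the paper's. The paper exploits the \emph{lower} deviation in \eqref{eq:EML2}: it greedily collects $t=n/(2+4pn)$ vertices of degree at most $2pn$ into a set $U$, lets $W$ be the at least $n/2$ vertices lying outside $U$ and outside all the neighbourhoods of vertices of $U$, and then $e(U,W)=0$ forces $\lambda\ge p\sqrt{tn/2}$, which together with $\lambda\le\eps p^2n$ rearranges to the claimed bound. You instead exploit the \emph{upper} deviation on a star: an above-average-degree vertex $v$ with $\deg(v)\ge pn/2$ (extracted from the case $A=B=V$, where $\eps p\le 1/2$ absorbs the error term) paired with $B=N(v)$ has edge density $1$, so \eqref{eq:EML2} forces $(1-p)\sqrt{\deg(v)}<\lambda\le\eps p^2n$; after the harmless split at $p\le 1/2$ (with $n_0\ge 1/(8\eps^2)$ disposing of $p>1/2$) this gives $p>(\eps^2n)^{-1/3}/2$, slightly better than required. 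Your argument is essentially the standard proof that any bijumbled graph must have $\lambda=\Omega((1-p)\sqrt{pn})$ — a fact the paper itself invokes in its concluding remarks in the form $\lambda=\Omega(\sqrt d)$ — combined with the hypothesis $\lambda\le\eps p^2n$; it is shorter and avoids the greedy construction of $U$ and $W$, while the paper's cut-based argument uses only degree averaging on induced subgraphs and the existence of an empty bipartite pair. Both proofs use $\eps p\le 1/2$ at the analogous point and need $n_0$ to depend only on $\eps$, so the two are interchangeable here.
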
 
\begin{proof}
Let $S\subseteq V$ be a set of at least $n/2$ vertices. 
Then there is a vertex $v\in S$ whose degree in $G[S]$ is at most $2p|S|$. 
Indeed, we have $\sum_{v\in S} \deg(v,S)=2e(G[S])\le p|S|^2+\lambda|S|$, which implies that the average degree in $G[S]$ is at most $p|S|+\lambda\le 2p|S|\le 2p n$.

We consecutively find vertices $v_1,\dots, v_t$ with $t= n/(2+4pn)$ such that setting $V_i:=V\setminus\{v_1,\ldots,v_{i-1}\}$, we have $\deg(v_i, V_i)\leq 2p |V_i|\leq 2pn$. Thus
 setting $U:=\{v_i\colon i\in[t]\}$ and $W:=V\setminus \left(U\cup\bigcup_{i\in [t]} N(v_i)\right)$, we have that $|W|\geq n-t(1+2pn) = n/2$ and 
$e_G(U,W)=0\ge p|U||W|-\lambda\sqrt{|U||W|}$. 

It follows that $\eps p^2 n\ge \lambda \ge p\sqrt{t n/2}$. Thus, $2\eps^2 p^2  n\ge t= n/(2+4pn)$, which implies $p^2\ge (\eps^{-2}/4)\min\{1/(4pn),1/2\}$. Rearranging we get $p\ge \min\{(\eps^2n)^{-1/3}/3,1/(2\sqrt{2}\eps)\}\ge (\eps^2n)^{-1/3}/4$ for $n$ sufficiently large.
\end{proof}

The following fact also concerns the edge distribution of bijumbled graphs.

 \begin{fact}\label{fact:irregular} 
  Let $\eps>0$ and $G$ be a $(p, \lambda)$-graph on $n$ vertices with $p\in(0,1]$
   and $\lambda\le \eps p^2n$.
   \begin{enumerate}[label=$(\roman*)$]
   \item If $U$ is a set of vertices, then there are at most $4\eps^2p^2n^2/|U|$ vertices $w$ in $G$ with $|N_G(w)\cap U| < p |U|/2$. \label{fact:irre1}
  \item Given an integer $t$ and vertex sets $U_1,\dots, U_t, W$ such that $|W| > \sum_{i=1}^t 4\eps^2p^2n^2/|U_i|$, we can find a vertex $w\in W$ such that $|N_G(w)\cap U_i| \ge p |U_i|/2$ for all $i\in [t]$, in polynomial time.
  \label{fact:irre2}
  \end{enumerate}
 \end{fact}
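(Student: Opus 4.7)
For part (i), I would set $B := \{w \in V(G) : |N_G(w) \cap U| < p|U|/2\}$ and apply the bijumbledness inequality~\eqref{eq:EML2} to the pair $(B,U)$. By the definition of $B$ one has $e(B,U) < p|B||U|/2$, while bijumbledness gives $e(B,U) > p|B||U| - \lambda\sqrt{|B||U|}$; combining the two yields $p|B||U|/2 < \lambda\sqrt{|B||U|}$, hence $|B||U| < 4\lambda^2/p^2 \le 4\eps^2 p^2 n^2$, and the claimed bound $|B| < 4\eps^2 p^2 n^2/|U|$ follows.

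For part (ii), the existence of a good $w\in W$ will be immediate from (i) via a union bound: the set of vertices of $V(G)$ that fail the condition for at least one $U_i$ has size strictly less than $\sum_i 4\eps^2 p^2 n^2/|U_i| < |W|$, so some vertex of $W$ works for every $U_i$. To exhibit such a $w$ algorithmically, I would first precompute the characteristic vectors of $U_1,\dots,U_t$ in $O(tn)$ time, and then iterate over $w\in W$; for each such $w$ I walk the adjacency list of $w$ once and maintain $t$ counters for $|N(w)\cap U_i|$, using $O(1)$ membership lookups per neighbour/set pair, halting the iteration as soon as some $w$ has all $t$ counters at least $p|U_i|/2$. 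This costs $O(t\deg(w))$ per vertex, and hence $O\!\left(t\sum_{w\in W}\deg(w)\right)$ in total.

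The one item to verify is that this sum is indeed $O(pn|W|)$. By~\eqref{eq:EML2} applied to $(W,V(G))$, one has $\sum_{w\in W}\deg(w) = e_G(W,V(G)) \le p|W|n + \lambda\sqrt{|W|n}$. The hypothesis on $|W|$ forces $|W| > 4\eps^2 p^2 n^2/|U_i| \ge 4\eps^2 p^2 n$ for at least one index $i$ (using $|U_i|\le n$), and combined with $\lambda\le \eps p^2 n$ this makes $\lambda\sqrt{|W|n}\le p|W|n$, yielding the claimed $O(tpn|W|)$ bound. Neither part presents a real obstacle; the only detail worth watching is this runtime comparison, which turns out to be automatic from the lower bound on $|W|$ built into the hypothesis.
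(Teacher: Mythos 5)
Your proof is correct and follows essentially the same route as the paper: part (i) by applying the bijumbledness inequality to the bad set against $U$, and part (ii) by the union bound over the $U_i$ together with brute-force screening of the vertices of $W$. The only (harmless) difference is in the runtime bookkeeping: the paper charges $t\cdot O(pn)$ per screened vertex, whereas you bound the total work by $t\sum_{w\in W}\deg(w)\le 2tpn|W|$ via bijumbledness applied to $(W,V(G))$, which is if anything slightly more careful.
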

 \begin{proof}
 Let $U'$ be the set of vertices $w$ such that
  $|N_G(w)\cap U| < p |U|/2$. From~\eqref{eq:EML2} 
  we have $|U'|p|U|/2> e(U',U)\ge p|U||U'|-\lambda\sqrt{|U||U'|}$. The conclusion follows from rearranging.
  
By the first part of the fact, $W$ clearly contains a desired vertex.
We find it by screening the degree of any vertex of $W$ into each $U_i$, which takes polynomial time. 
 \end{proof}

Next, given three sets $A$, $B$ and $C$, we show how to find an $A$-$B$-path of given length such that the inner vertices are from $C$.

 \begin{proposition}\label{prop:find_paths}
  Let $\eps>0$, $\ell \in \NN$ and  $G$ be a $(p, \lambda)$-graph on $n$ vertices with $p\in(0,1]$,
   $\lambda\le \eps p^2n$ and $\eps pn\ge 1$. If $A$ and $B$ are sets of at least 
   $2^{\ell-1}\eps p n$ vertices and $C$ is a set of at least 
   $2^{\ell-1}\eps n$ vertices, then in polynomial time, we can find an $A$-$B$-path $P$ of length $\ell$ whose inner vertices lie in $C$. 
 \end{proposition}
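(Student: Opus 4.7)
The plan is to build the path by running a two-sided breadth-first search (BFS) through $C$: starting from $A$ and from $B$, grow layers inside $C$ until the two frontiers have enough mass to span a connecting edge, then backtrack to produce an explicit $A$-$B$-path with distinct inner vertices in $C$.

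Concretely, I would set $\ell_A := \lfloor (\ell-1)/2 \rfloor$ and $\ell_B := \lceil (\ell-1)/2 \rceil$, so $\ell_A+\ell_B = \ell-1$ matches the number of inner vertices. Define $A_0 := A$ and inductively
$$A_i := \{v \in C : |N_G(v)\cap A_{i-1}| \ge p|A_{i-1}|/2\} \quad \text{for } i = 1, \dots, \ell_A,$$
and set up $B_0, B_1, \dots, B_{\ell_B}$ analogously. Fact~\ref{fact:irregular}\ref{fact:irre1} gives the recursion $|C \setminus A_i| \le 4\eps^2p^2n^2/|A_{i-1}|$. Using the starting sizes $|A_0|\ge 2^{\ell-1}\eps pn$ and $|C|\ge 2^{\ell-1}\eps n$, a short induction will yield $|A_i| \ge 2^{\ell-1-i}\eps pn$ for all $0\le i\le \ell_A$, and similarly for the $B_j$; the factor $2^{\ell-1}$ in the hypothesis is tuned so that after $\lceil (\ell-1)/2 \rceil$ halvings the last layers still have size at least $2\eps pn$.

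It then follows that $|A_{\ell_A}|\cdot|B_{\ell_B}| \ge 2^{\ell-1}\eps^2 p^2 n^2$, so for $\ell \ge 3$ Remark~\ref{rem:easy bijumbled} yields many edges between these last layers, while for the small cases $\ell\in\{1,2\}$ the defining bijumbledness inequality still forces at least one edge. Fix such an edge $cc'$ with $c\in A_{\ell_A}$ and $c'\in B_{\ell_B}$, and backtrack: for $i=\ell_A,\ell_A-1,\dots,1$, replace the current vertex in $A_i$ by one of its at least $p|A_{i-1}|/2\ge \eps p^2 n$ neighbors in $A_{i-1}$, choosing one not previously selected. Only $O(\ell)=O(1)$ vertices have been used so far, so for $n$ large enough a fresh vertex always exists. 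The analogous backtrack from $c'$, together with this abundance of choices, produces a vertex-disjoint path segment, and concatenation gives the desired $A$-$B$-path of length $\ell$ with all inner vertices in $C$.

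The main obstacle I anticipate is the size bookkeeping at the very first BFS step from $A$, where we transition from a set of size $\Theta(\eps pn)$ in the full graph to an almost-complete subset of $C$ of size $\Theta(\eps n)$: here the loss term $4\eps^2 p^2 n^2/|A_0|$ is largest, and the exponent $2^{\ell-1}$ in the hypothesis is precisely calibrated to absorb this initial drop. A secondary technicality is achieving the claimed $O(p^2 n^2)$ running time: instead of computing the full sets $A_i$, I would restrict the BFS to a suitable subset of $C$ of size $\Theta(\eps pn)$ which still inherits enough of the bijumbled edge-distribution, so that computing each layer via adjacency-matrix lookups costs $O((\eps pn)^2)$ and the backtracking is negligible.
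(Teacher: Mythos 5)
The existence half of your argument (two-sided layers inside $C$, finished off by Remark~\ref{rem:easy bijumbled} on the last layers) is a workable alternative for $\ell\ge 3$, and it is genuinely different from the paper's proof, which extends one-sidedly: it finds a single vertex $a\in A$ with $\deg(a,C)\ge p|C|/2$ via Fact~\ref{fact:irregular}~\ref{fact:irre2} and recurses on $N(a)\cap C$, $B\setminus\{a\}$, $C\setminus\{a\}$ with parameter $\ell-1$. That scheme only ever pays ``$-1$'' losses, so vertex-disjointness is automatic and only $\eps pn\ge 1$ is used; by contrast your backtracking needs roughly $\eps p^2 n>\ell$ many choices to guarantee a fresh vertex, which does not follow from the stated hypotheses (it needs Proposition~\ref{prop:lower_bd} together with an $n_0$ that the statement does not provide), and your $\ell=2$ case is not actually covered for $p$ close to $1$, where the crude bound $|B_1|\ge |C|-2\eps pn$ can become vacuous. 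These points are repairable, but they should be argued, not waved at.

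The genuine gap is the running time, which is the actual algorithmic content of the proposition. Computing a full layer $A_i$ means testing the degree of up to $n$ vertices of $C$ into $A_{i-1}$, i.e.\ $O(pn^2)$ per layer, a factor $1/p$ above the claimed $O(p^2n^2)$. Your proposed remedy --- restricting the BFS to a subset $C'\subseteq C$ of size $\Theta(\eps pn)$ --- breaks the layer-growth analysis: Fact~\ref{fact:irregular}~\ref{fact:irre1} bounds the number of bad vertices for a set $U$ by $4\eps^2p^2n^2/|U|$, which for $|U|=\Theta(\eps pn)$ is itself $\Theta(\eps pn)$, the same order as $|C'|$, so the next restricted layer may be empty; with $\lambda$ as large as $\eps p^2n$, bijumbledness cannot guarantee that any vertex of one $\Theta(\eps pn)$-sized set has good (or even positive) degree into another, which is exactly why the hypothesis asks for $|C|\ge 2^{\ell-1}\eps n$ rather than $2^{\ell-1}\eps pn$. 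The way to hit $O(p^2n^2)$ is not to compute layers at all, but to search for one good vertex at a time as in Fact~\ref{fact:irregular}~\ref{fact:irre2}: each candidate's degree is checked in time $O(pn)$, at most $O(\eps pn)$ candidates can fail, and this is iterated $\ell=O(1)$ times --- which is precisely the paper's one-sided greedy induction.
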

 \begin{proof}
  If $\ell=1$ then we have $e(A,B)> p|A||B|-\lambda\sqrt{|A||B|}\ge \sqrt{|A||B|}\left(p\eps p n-\lambda\right){\ge} 0$, namely, there is an edge with one end in $A$ and the other in $B$.
  We can find such an edge by searching the neighbourhoods of vertices in $A$ one by one. 
    We proceed now inductively and we assume that $\ell\ge 2$ and the assumption holds for $\ell-1$. 
   
  By Fact~\ref{fact:irregular}~\ref{fact:irre2} we find a vertex $a\in A$ with degree 
  at least $p|C|/2$ into $C$  in polynomial time. 
 Applying our inductive hypothesis to $N(a)\cap C$, $B\setminus \{a\}$ and $C\setminus\{a\}$ we find an $(N(a)\cap C)$-$(B\setminus\{a\})$-path of length $\ell-1$ with inner vertices in $C$, which together with $a$ yields the desired path of length~$\ell$.
 \end{proof}

We will use copies of $\Clm$ in our absorbing structure. The following simple fact asserts that we can find these copies in any large enough set of vertices.

 \begin{fact}\label{fact:cyclecount}
  Let $\eps>0$, $K\in \NN$ and let $G$ be a $(p, \lambda)$-graph on $n$ vertices with $p\in(0,1]$
   and $\lambda\le \eps p^2n$. Let $\eps p^2n\ge K/4$, $\ell\ge 4$ and   $U$ be a set of at least 
   $2^{\ell}\eps n$ vertices. Then we can find a copy of $\Clm$, and thus also a copy of $C_\ell$ in $U$, in polynomial time.
 \end{fact}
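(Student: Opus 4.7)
The plan is to pick two endpoints $w, x \in U$ that share many common neighbours in $U$, then route a $w$--$x$-path of length $\ell-2$ in $U$ avoiding $K$ designated common neighbours; the path together with these $K$ reserved vertices will form the desired copy of $\Clm$.

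For the endpoints, I would first invoke Fact~\ref{fact:irregular}~\ref{fact:irre2} to find $w \in U$ with $|N(w) \cap U| \ge p|U|/2$. Setting $T := N(w) \cap U$, so that $|T| \ge 2^{\ell-1}\eps pn$, I would then invoke Fact~\ref{fact:irregular}~\ref{fact:irre2} once more to locate $x \in U \setminus \{w\}$ simultaneously satisfying $|N(x) \cap U| \ge p|U|/2$ and $|N(x) \cap T| \ge p|T|/2$. The preconditions for both calls reduce to $|U| \ge 2^\ell \eps n \gg \eps pn$ and $|T| \ge 2^{\ell-1}\eps pn \gg \eps pn$, and each call runs in time $O(p^2 n^2)$. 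The common neighbourhood $C := N(w) \cap N(x) \cap U$ then satisfies
\[
|C| \;\ge\; |N(x)\cap T| \;\ge\; \tfrac{p|T|}{2} \;\ge\; \tfrac{p^2|U|}{4} \;\ge\; 2^{\ell-2}\eps p^2 n \;\ge\; 2^{\ell-4} K,
\]
which comfortably exceeds $K + \ell - 3$ for $\ell \ge 5$. The borderline case $\ell = 4$ is handled separately: since $p|T| \ge 2^{\ell-3}K \ge 2K$ already for $\ell = 4$, a direct application of bijumbledness to the set $\{x \in U : |N(x)\cap T| \le K\}$ shows that in fact most $x \in U$ yield $|C| \ge K+1$, which provides the single interior vertex needed for the length-$2$ path.

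Having reserved $K$ common neighbours $C' \subseteq C$ as the ``extra'' vertices of the target $\Clm$, it remains to find a $w$--$x$-path of length $\ell-2$ whose interior lies in $U' := U \setminus (C' \cup \{w,x\})$. For this I would apply Proposition~\ref{prop:find_paths} with $A := N(w) \cap U'$, $B := N(x) \cap U'$, inner-vertex set $U'$, and length $\ell - 4$; the above choices give $|A|, |B| \ge p|U|/2 - K - 1 \ge 2^{\ell-2}\eps pn$ and $|U'| \ge |U| - K - 2 \ge 2^{\ell-1}\eps n$, both well above the $2^{\ell-5}\eps pn$ and $2^{\ell-5}\eps n$ thresholds required by the proposition. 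Prepending $w$ and appending $x$ to the returned path yields a $w$--$x$-path of length $\ell-2$; together with $C'$ this is the required copy of $\Clm$, which in particular contains $C_\ell$. The total runtime is $O(p^2 n^2)$, paying $O(p^2n^2)$ for each of the two calls to Fact~\ref{fact:irregular}~\ref{fact:irre2} and $O(p^2n^2)$ for the single call to Proposition~\ref{prop:find_paths}. The main technical subtlety I anticipate is the off-by-one in the case $\ell = 4$, where the rough estimate $p|T|/2$ from Fact~\ref{fact:irregular}~\ref{fact:irre1} only guarantees $|C| \ge K$ rather than $K+1$; this is the one place where bijumbledness has to be applied a touch more sharply than through the ready-made lemmas.
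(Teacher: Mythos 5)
Your proposal is correct and takes essentially the same route as the paper's own proof: select two endpoints of large degree into $U$ and large common degree via Fact~\ref{fact:irregular}, reserve $K$ common neighbours, and route the remaining path of length $\ell-4$ with Proposition~\ref{prop:find_paths}, treating $\ell=4$ separately. Your extra bijumbledness computation for the $\ell=4$ off-by-one is a harmless refinement of the step the paper states directly, not a different argument.
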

 \begin{proof}
Let $U'_1$ be the set of  vertices $v\in U$ with $|N(v)\cap U|< p|U|/2$. 
Since $|U|\ge  2^\ell \eps n$, Fact~\ref{fact:irregular}~\ref{fact:irre1} implies that $|U'_1|\le \eps p^2n/4$. 
We fix a vertex $u_1\in U\setminus U'_1$, i.e.\ $\deg(u_1,U)\ge p|U|/2$. 
Let $U'_2$ be the set of  vertices $v\in U$ with  $|N(v)\cap (N(u_1)\cap  U)|<p|N(u_1)\cap  U|/2$. 
Since $|N(u_1)\cap  U|\ge p|U|/2\ge 8 \eps pn$, Fact~\ref{fact:irregular}~\ref{fact:irre1} implies that $|U'_2|\le \eps pn/2$.
Thus, we have $|U_1'\cup U'_2|\le \eps p n$.
 
 We choose an arbitrary vertex $u_2\in U\setminus (U'_1\cup U'_2\cup\{u_1\})$. If $\ell=4$ then we clearly find a copy of $C_\ell(1,1,1,K)$ in $U$, because $|N(u_1)\cap N(u_2)\cap  U|\ge p^2|U|/4 \ge K+1$.
 If $\ell\ge 5$, then we first set aside a set $W$ of $K$ vertices from the common neighbourhood of $u_1$ and $u_2$. Now due to the fact that $|(N(u_i)\cap U)\setminus(W\cup \{u_1,u_2\})|\ge 2^{\ell-2}\eps p n$ for $i=1,2$ and 
 $|U\setminus (W\cup\{u_1,u_2\})|\ge 2^{\ell-2}\eps n$, we find by Proposition~\ref{prop:find_paths} 
 a path of length $\ell-4$ between $N(u_1)\cap U$ and $N(u_2)\cap U$, which together with $u_1$, $u_2$ and $W$, forms a copy of $\Clm$.
 
For the running time, by Fact~\ref{fact:irregular}~\ref{fact:irre2}, we can find $u_1$ and $u_2$ in polynomial time and 
 the rest of the proof runs in polynomial time  
 because we use Proposition~\ref{prop:find_paths}.
 \end{proof}
  
The following lemma asserts that we can (greedily) find almost spanning paths in $(p, \lambda)$-graphs. 
\begin{lemma}\label{lem:long_paths}
 Let $\eps>0$ and~$G$ be a $(p, \lambda)$-graph on $n$ vertices with $p\in(0,1/2]$ and
  $\lambda \le \eps p^{2}n$. If $U$ is a vertex subset of size greater than $\eps n$, then 
  we can find any path of length $\ell \le |U|-\eps n$ in $U$ in polynomial time. 
\end{lemma}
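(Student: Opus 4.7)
The plan is to run a standard depth-first search (DFS) on the subgraph $G[U]$ and to argue, using bijumbledness, that the DFS stack must at some point contain at least $\ell + 1$ vertices, which then form the desired path of length $\ell$.

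Concretely, throughout the DFS I would maintain the usual partition $U = S \dcup T \dcup R$: $S$ is the current stack (always inducing a path in $G$), $T$ is the set of vertices already popped off the stack, and $R$ is the set of not-yet-visited vertices. Initialise $R = U$ and $S = T = \emptyset$, and proceed in the standard way: whenever the top vertex of $S$ has a neighbour in $R$, push such a neighbour onto $S$; otherwise pop the top vertex of $S$ into $T$; and if $S$ is empty, start a fresh DFS tree by pushing an arbitrary vertex of $R$. Halt as soon as $|S| = \ell + 1$ and return $S$.

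The main content of the proof is showing that $|S|$ must reach $\ell+1$. Two well-known DFS invariants do the job: (i) there are no $T$--$R$ edges in $G$ at any time, since a vertex is popped into $T$ only when all of its neighbours lie in $S\cup T$, and $R$ only shrinks thereafter; (ii) the quantity $|T|-|R|$ increases by exactly $1$ at each DFS step (an easy case check over the three possibilities), starting from $-|U|$. Hence at some step $|T| = |R|$. If one assumes $|S| \le \ell$ throughout, then at that step $|T| = |R| = (|U|-|S|)/2 \ge \eps n/2$, using the hypothesis $\ell \le |U| - \eps n$. Writing $x = |T| = |R|$, bijumbledness now yields
\[
0 = e(T,R) > px^2 - \lambda x \,=\, x(px - \lambda) \,\ge\, \tfrac{\eps n}{2}\bigl(\tfrac{p\eps n}{2} - \eps p^2 n\bigr) \,=\, \frac{\eps^2 p n^2(1-2p)}{4} \,\ge\, 0,
\]
where I used $\lambda \le \eps p^2 n$, $x \ge \eps n/2$ and $p \le 1/2$ (the last two guaranteeing that $px - \lambda \ge 0$ so that $x(px-\lambda)$ is increasing in $x$). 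This contradiction forces $|S|$ to hit $\ell+1$.

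The main obstacle I expect is the boundary case $p = 1/2$, where the outer lower bound $\eps^2 p n^2(1-2p)/4$ vanishes; here one must explicitly use the \emph{strict} inequality in the definition~\eqref{eq:EML2} of bijumbledness to obtain $e(T,R) > 0$. For the running time, a standard amortised DFS analysis examines each edge of $G[U]$ a constant number of times, giving a total runtime of $O(e(G[U])) = O(p n^2)$, which is within the stated $O(\ell\cdot p^2 n^2)$ in the intended regime (with $\ell = \Omega(n)$ and $pn$ large by Proposition~\ref{prop:lower_bd}).
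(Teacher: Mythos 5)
Your DFS argument is correct, and it takes a genuinely different route from the paper. The paper's proof is a local greedy one: using Fact~\ref{fact:irregular}~\ref{fact:irre1} it maintains a path whose current endpoint $u_t$ has degree at least $p|U\setminus V(P_t)|/2$ into the unused part of $U$, and extends one vertex at a time, each extension found via Fact~\ref{fact:irregular}~\ref{fact:irre2} in time $O(p^2n^2)$ -- which is exactly where the stated bound $O(\ell\cdot p^2n^2)$ comes from. You instead get the path from a global argument in the spirit of the Ben-Eliezer--Krivelevich--Sudakov DFS technique: the invariant $e(T,R)=0$, the fact that $|T|-|R|$ increases by exactly one per step (so some step has $|T|=|R|=(|U|-|S|)/2\ge\eps n/2$ if the stack never reaches $\ell+1$), and the strict inequality in~\eqref{eq:EML2} together with $px-\lambda\ge \eps pn(1/2-p)\ge 0$ give the contradiction. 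This buys robustness and simplicity: no degree invariant along the path is needed, only the fact that there is no empty bipartite pair between two sets of size $\eps n/2$, and the boundary case $p=1/2$ is handled exactly as you say, by strictness in~\eqref{eq:EML2}. (One cosmetic point: the stack vertices form a path in stack order rather than inducing one, which is all you need.)

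The one place you fall short of the statement as written is the running time. The lemma promises $O(\ell\cdot p^2n^2)$ for every $\ell$, and in its application in Theorem~\ref{thm:long_cycles} the path lengths can be as small as a constant, so ``the intended regime $\ell=\Omega(n)$'' is not accurate; your bound $O(pn^2)$ exceeds $\ell p^2n^2$ whenever $\ell\ll 1/p$. This is harmless for the paper (a single DFS is still polynomial, so all theorems survive), and your own invariant in fact rescues the stated bound: while $|R|\ge\eps n/2$ and $|S|\le\ell$, the identity $e(T,R)=0$ and~\eqref{eq:EML2} force $\sqrt{|T||R|}<\lambda/p\le\eps pn$, hence $|T|<2\eps^2p^2n$, and since $|R|\ge |U|-\ell-|T|\ge \eps n-|T|$ this persists until the stack reaches $\ell+1$. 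Thus only $O(\ell+\eps^2p^2n)$ vertices are ever pushed, and the total adjacency-list work is at most $e(X,V(G))\le p|X|n+\lambda\sqrt{|X|n}=O(\ell p^2n^2)$, using $pn\ge 2$ from Proposition~\ref{prop:lower_bd}. Add this refinement (or weaken your claim to ``polynomial time'') and your proof fully replaces the paper's.
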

\begin{proof}
By Fact~\ref{fact:irregular} there is a vertex $u\in U$ of degree at least $p|U|/2$ in $U$. 
This gives us a path of length $0$. 
Assume now that we found inductively a path $P_t=u_0u_1\ldots u_t$ of length $t\le \lfloor |U|-\eps n\rfloor-1$ such that $\deg(u_t,U\setminus V(P_t))\ge p|U\setminus V(P_t)|/2$. Then by Fact~\ref{fact:irregular}~\ref{fact:irre1}, as $|U\setminus V(P_t)|\ge \eps n$
 and using that $p\le 1/2$, 
we have that there exists a vertex $u_{t+1}\in N(u_t)\cap (U\setminus V(P_t))$ with $\deg(u_{t+1},U\setminus V(P_t))\ge p|U\setminus V(P_t)|/2$ and the induction step is complete.

Since the proof is a repeated application of Fact~\ref{fact:irregular}~\ref{fact:irre1}, Fact~\ref{fact:irregular}~\ref{fact:irre2} implies that the running time is polynomial. 
\end{proof}
\subsection{Partitioning vertex sets}
At various points in our proof, we will wish to partition our vertex set in such a way that every vertex maintains good degree to all parts of the partition. 
This can be easily achieved probabilistically by choosing a random partition. However this idea can {also} be derandomised and achieved computationally efficiently. We use the following theorem of Alon and Spencer. 
\begin{theorem}[Theorem 16.1.2 in \cite{alon16}] \label{thm:derandomised partition}
Let $(a_{ij})_{i,j=1}^n$ be an $n \times n$ $0/1$-matrix. Then one can find, in polynomial time, $\varepsilon_1,\ldots, \varepsilon_n\in \{-1,1\}$ such that for every $1 \le i \le n$, it holds that $|\sum_{j=1}^n\varepsilon_ja_{ij}|\le \sqrt{2n\log(2n)}$. 
\end{theorem}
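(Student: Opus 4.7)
The plan is to apply the method of conditional expectations with an exponential pessimistic estimator, the standard derandomisation of the Chernoff--Hoeffding tail bound. Fix $t := \sqrt{2n\log(2n)}$ and tune $\alpha := \sqrt{2\log(2n)/n}$ so that $e^{\alpha t} = 4n^2$. Writing $X_i := \sum_{j=1}^n \varepsilon_j a_{ij}$, introduce the potential
\[
\Phi(\varepsilon) \;:=\; \sum_{i=1}^n \bigl(e^{\alpha X_i}+e^{-\alpha X_i}\bigr).
\]
Since $e^{\alpha|X_i|}\le e^{\alpha X_i}+e^{-\alpha X_i}\le \Phi$, any assignment with $\Phi(\varepsilon)\le 4n^2$ automatically forces $|X_i|\le t$ for every row $i$; the goal thus reduces to producing an $\varepsilon\in\{-1,1\}^n$ with $\Phi(\varepsilon)\le 4n^2$.

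Next, I would verify that $\EE[\Phi]\le 4n^2$ under the uniform product distribution on $\{-1,1\}^n$. Using independence of the $\varepsilon_j$ and the pointwise inequality $\cosh(y)\le e^{y^2/2}$, one obtains
\[
\EE\bigl[e^{\pm\alpha X_i}\bigr] \;=\; \prod_{j=1}^n \cosh(\alpha a_{ij}) \;\le\; e^{\alpha^2 n/2} \;=\; 2n,
\]
where the last equality uses $\alpha^2 n/2 = \log(2n)$, and hence $\EE[\Phi]\le 4n^2$ by linearity.

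With this estimator in hand, the algorithm fixes $\varepsilon_1,\ldots,\varepsilon_n$ sequentially: at step $j$, by the averaging identity the conditional expectation $\EE[\Phi\mid \varepsilon_1,\ldots,\varepsilon_{j-1}]$ is the mean of the two conditional expectations obtained by setting $\varepsilon_j=\pm 1$, so at least one of these does not exceed it, and I would commit $\varepsilon_j$ to that value. Crucially, $\EE[\Phi\mid \varepsilon_1,\ldots,\varepsilon_j]$ factors row-by-row into a product of the already-fixed $e^{\pm\alpha a_{ik}}$ and the remaining $\cosh(\alpha a_{ik})$ terms, so each update is computable in time $O(n^2)$ and the algorithm runs in polynomial time. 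By induction the final deterministic output satisfies $\Phi(\varepsilon)\le \EE[\Phi]\le 4n^2$, giving $|X_i|\le t$ for all $i$.

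The only delicate point is calibrating $\alpha$ precisely so that the constants in the Chernoff-style bound line up with the stated $\sqrt{2n\log(2n)}$ (which is what forces the choice of the hyperbolic-cosine estimator rather than a coarser second-moment one). Beyond this bookkeeping the argument is a textbook pessimistic-estimator derandomisation and I anticipate no genuine obstacle.
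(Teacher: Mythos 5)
Your proof is correct, and it is essentially the argument behind the result as cited: the paper does not prove this statement itself but quotes it from Alon and Spencer, whose proof is exactly this conditional-expectation derandomisation with the hyperbolic-cosine (Chernoff-type) pessimistic estimator and the same calibration $e^{\alpha t}=4n^2$. Nothing further is needed.
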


  \begin{corollary} \label{cor:partition}
Let $k\in \NN$ $\eps, \beta, \delta>0$ and $p\in (0,1]$. Then there exists $n_0 \in \NN$ such that for any $(p, \lambda)$-graph $G$ on $n\ge n_0$ vertices such that $\lambda \leq \eps p^2n$, the following holds. 
Let $U,W\subset V(G)$ be subsets of vertices such that $|U|\ge\beta n$ and for all $w\in W$, $\deg(w,U)\ge \delta p|U|$. 
Then in polynomial time, we can find $s:=2^k$ sets $U_1,\ldots, U_s \subseteq U$ such that $U=U_1 \dcup \ldots \dcup U_s$ forms an equipartition\footnote{Due to divisibility constraints, we formally mean here that the the sizes of the sets differ by at most one, and so $|U_i|\in \{\lfloor |U|/s\rfloor, \lceil |U|/s\rceil \}$ for each $i$.} of $U$ 
 and for all $w\in W$ and $i \in [s]$, $\deg(w, U_i) \ge \delta p|U_i|/2$. 
 \end{corollary}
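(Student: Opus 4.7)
The plan is to iterate Theorem \ref{thm:derandomised partition} $k$ times, halving the current partition at each step, to eventually obtain $s=2^k$ parts. Since $k$ is a fixed constant, each halving can afford to introduce an error of size $O(\sqrt{n\log n})$ in the degree counts, provided the cumulative error over all $k$ rounds stays negligible compared to $\delta p|U|/s$.

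For a single halving, suppose $U'\subseteq U$ has size $N$ and every $w\in W$ has $\deg(w,U')\ge D$. Enumerate $U'=\{u_1,\ldots,u_N\}$ and form an $M\times M$ $0/1$-matrix $A$, where $M=\max(N,|W|+1)$. The first $|W|$ rows encode the bipartite adjacencies from $W$ to $U'$ (zero in columns $j>N$), the $(|W|{+}1)$-st row is the indicator of $U'$ (ones in columns $1,\ldots,N$ and zeros thereafter), and the remaining rows and columns are zero padding. Applying Theorem \ref{thm:derandomised partition} yields signs $\varepsilon_1,\ldots,\varepsilon_M\in\{-1,+1\}$ with every row sum of $(\varepsilon_j a_{ij})_j$ bounded in absolute value by $\sqrt{2M\log(2M)}\le\sqrt{2n\log(2n)}$. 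Set $U'_+=\{u_j:j\le N,\varepsilon_j=+1\}$ and $U'_-=U'\setminus U'_+$. The balance row gives $\bigl||U'_+|-|U'_-|\bigr|\le\sqrt{2n\log(2n)}$, while each adjacency row gives $|\deg(w,U'_+)-\deg(w,U'_-)|\le\sqrt{2n\log(2n)}$ for every $w\in W$, hence $\deg(w,U'_\pm)\ge D/2-\sqrt{2n\log(2n)}$. To make the sizes exactly equal, move at most $\sqrt{2n\log(2n)}/2$ vertices from the larger side to the smaller; this decreases each $\deg(w,U'_\pm)$ by at most that amount.

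Iterating the halving $k$ times yields a partition $U=U_1\dcup\cdots\dcup U_s$ with $|U_i|=|U|/s$. Writing $E_t$ for the worst-case degree loss after $t$ halvings relative to $\delta p|U|/2^t$, the recursion $E_{t+1}\le E_t/2+2\sqrt{2n\log(2n)}$ gives $E_t\le 4\sqrt{2n\log(2n)}$ for all $t\le k$. Hence $\deg(w,U_i)\ge\delta p|U|/s-4\sqrt{2n\log(2n)}$, and we need this to be at least $\delta p|U_i|/2=\delta p|U|/(2s)$, equivalently $\delta p|U|\ge 8s\sqrt{2n\log(2n)}$. Since $|U|\ge\beta n$ and Proposition \ref{prop:lower_bd} gives $p\ge(\eps^2 n)^{-1/3}/4$ (when $\eps p\le 1/2$, otherwise $p$ is bounded below by a constant), we have $p|U|=\Omega(n^{2/3})$, which easily dominates $\sqrt{n\log n}$ for $n$ sufficiently large in terms of $\delta,\beta,\eps,k$.

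The main obstacle is only bookkeeping: tracking the accumulated degree deficit through the $k=O(1)$ levels, and correctly padding the input to Theorem \ref{thm:derandomised partition} so that it is square (with the extra row enforcing balance between the two halves). Each application of Theorem \ref{thm:derandomised partition} runs in polynomial time, and we perform only $k$ of them, so the overall algorithm is polynomial.
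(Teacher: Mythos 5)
Your proposal is correct and follows essentially the same route as the paper: repeatedly apply the Alon--Spencer derandomised balancing theorem (with an extra all-ones row to control the part sizes), rebalance, track the $O(\sqrt{n\log n})$ degree loss accumulated over the $k$ rounds, and absorb it using the lower bound $p\ge (\eps^2 n)^{-1/3}/4$ from Proposition~\ref{prop:lower_bd}. The only cosmetic difference is that you build a padded $W$-versus-$U'$ matrix at each level while the paper works with the (masked) adjacency matrix of $G$; this changes nothing of substance.
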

 
 \begin{proof}
 We apply Theorem~\ref{thm:derandomised partition} to the adjacency matrix of $G$, where we add an all one row and an all one column and impose that row $i$ is all zero if $i \notin W$ and column $j$ is all zero if $j \notin U$. We let $U_{b}'=\{j \in U: \varepsilon_j=(-1)^b\}$, for $b=1,2$. The last row of the matrix guarantees that $||U_1'|-|U_2'||\le \sqrt{2(n+1)\log(2n+1)}=:g(n)$. The other rows guarantee that the vertices in $W$ have good degree to both sets, so that after moving some vertices from one of the sets to another in order to balance $|U_1'|$ and $|U_2'|$, we have that for all $w\in W$, $\deg(w, U_i') \ge \delta p|U|/2- 2g(n)$. 
 
We can now apply the above procedure to each $U_i'$, with the new minimum degrees. Repeating this $k$ times, we end up with $U_1,\ldots, U_s$ as an equipartition of $U$ such that for any $w\in W$, $\deg(w,U_i) \ge \delta p |U|/s - 2kg(n).$
Owing to Proposition \ref{prop:lower_bd}, we are done because for sufficiently large $n$, $2kg(n) \le  \delta\beta pn/(2s) \le \delta p|U|/(2s) $.
 \end{proof}

\subsection{A connecting lemma}

The lemma below allows us to close many paths (whose ends are `well-connected' into a large set) into cycles using short paths of a fixed prescribed length. In the following lemma a $v$-$v$-path refers to a cycle through $v$ whose inner vertices are all the vertices of the cycle not equal to $v$.
\begin{lemma}[Multiple connection lemma]\label{lem:connections}
  For every $0<\beta,\delta'\le 1$, $\ell\ge 3$ there exists $\eps_0>0$ and $n_0\in \NN$ such that for all $\eps\in (0,\eps_0)$  and $n\ge n_0$ the following holds.  Let~$G$ be a $(p, \lambda)$-graph on $n$ vertices with $p\in(0,1]$ and
  $\lambda \le \eps p^{2}n$.  
  Let  $U$ be  a  vertex subset of size at least $\beta n$ and $(a_i,b_i)_{i\in[r]}$ a system of pairs of vertices in $G$, so that  every vertex occurs at most twice in $(a_1,\ldots,a_r,b_1,\ldots,b_r)$ and
   $U$ is   disjoint from $\bigcup_i\{a_i,b_i\}$. 
   If  $r\le |U|/(8\ell)$ and 
   $\deg(a_i,U)$, $\deg(b_i,U)\ge \delta' p |U|$ for  all $i\in[r]$ then the following holds.
  In polynomial time, we can find a family $\cQ$ of length $\ell$ $a_i$-$b_i$-paths $Q_i$, 
  whose inner vertices are pairwise disjoint and lie in $U$. 
\end{lemma}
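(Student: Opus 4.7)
The plan is to construct the paths $Q_1, \dots, Q_r$ sequentially. At step $i$, having built $Q_1, \dots, Q_{i-1}$ with combined set of inner vertices $V_{i-1} \subset U$ satisfying $|V_{i-1}| \le (i-1)(\ell-1) < r\ell \le |U|/8$, I construct $Q_i$ by applying Proposition~\ref{prop:find_paths} with length parameter $\ell - 2$ to find an $A_i$-$B_i$-path of length $\ell - 2$ with inner vertices in $C_i$, where
\[
A_i := N(a_i) \cap (U \setminus V_{i-1}), \quad B_i := N(b_i) \cap (U \setminus V_{i-1}), \quad C_i := (U \setminus V_{i-1}) \setminus \{a_i, b_i\}.
\]
Extending by the edges $a_i v$ and $w b_i$ (where $v$ and $w$ are the endpoints of the produced path) yields the required $a_i$-$b_i$-path $Q_i$ of length $\ell$. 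Since $r \le n$ and each application of Proposition~\ref{prop:find_paths} runs in time $O(p^2 n^2)$, polynomial running time will follow once the size conditions of Proposition~\ref{prop:find_paths} are verified.

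The condition $|C_i| \ge 2^{\ell-3}\eps n$ is immediate, as $|C_i| \ge 7|U|/8 - 2 \ge 7\beta n/8 - 2$, which comfortably exceeds $2^{\ell-3}\eps n$ for $\eps_0$ sufficiently small in terms of $\beta$ and $\ell$. The main obstacle is to ensure $|A_i|, |B_i| \ge 2^{\ell-3}\eps pn$ throughout the iteration. The naive bound $|A_i| \ge \delta' p|U| - |V_{i-1}|$ can be non-positive in the sparse regime when $\delta' p$ is comparable to or smaller than $1/8$, since $|V_{i-1}|$ may be as large as $|U|/8$.

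To overcome this, the plan is to construct the inner vertices of each $Q_k$ so as to maintain the stronger invariant
\[
|V_{k} \cap N(a_j)|, \ |V_{k} \cap N(b_j)| \le \tfrac{1}{2} |N(a_j) \cap U| \quad \text{for every } j \in [r],
\]
which immediately gives $|A_i| \ge \tfrac{1}{2}|N(a_i)\cap U| \ge \delta' p\beta n/2 \ge 2^{\ell-3}\eps pn$ for $\eps_0 \le \delta'\beta/2^{\ell-2}$, and similarly for $B_i$. To preserve the invariant when selecting each new inner vertex of $Q_k$, I will use Fact~\ref{fact:irregular}\itm{ii} to pick a candidate that extends the current path, has large degree into the remaining pool (to enable continuation), and is \emph{typical} with respect to each of the $2r$ neighborhoods $N(a_j) \cap U$ and $N(b_j) \cap U$, meaning that its membership in these neighborhoods occurs in proportion to $|N(a_j)\cap U|/|U|$. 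The hard part of the argument is bounding the union of the exceptional (``atypical'') sets over all $2r$ neighborhoods: by bijumbledness each individual exceptional set has size $O(\eps^2 p^2 n^2 / |N(a_j)\cap U|)$, and the whole union must be shown to form a strictly smaller fraction of the available pool $C_k$. This will force the choice of $\eps_0$ to depend essentially on $\beta$, $\delta'$ and $\ell$, and is the delicate balancing step on which the proof ultimately hinges.
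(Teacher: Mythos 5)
There is a genuine gap, and it sits exactly where you say your proof ``ultimately hinges''. Your sequential scheme and the use of Proposition~\ref{prop:find_paths} are fine, and you correctly identify the real obstacle (in the sparse regime $|V_{i-1}|$ can be as large as $|U|/8\gg \delta'p|U|$, so the naive degree bound dies). But the proposed repair is not carried out and, as sketched, does not work. First, the invariant $|V_k\cap N(a_j)|\le \tfrac12|N(a_j)\cap U|$ cannot be maintained by choosing ``typical'' vertices via Fact~\ref{fact:irregular}~\ref{fact:irre2}: that fact controls the \emph{degree} of a candidate vertex into prescribed sets, whereas your invariant is about \emph{membership} of the chosen vertices in the $2r$ sets $N(a_j)\cap U$, a binary per-vertex property that cannot be ``proportional''. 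Second, the vertices that actually threaten the invariant are the forced placements: each path $Q_i$ must put one inner vertex inside $N(a_i)\cap U$ and one inside $N(b_i)\cap U$, sets of size only $\Theta(p|U|)$, and nothing in your argument prevents these forced choices from piling up inside some other $N(a_j)\cap U$ when neighbourhoods overlap heavily. Third, the quantitative step you defer is genuinely problematic: there are $2r=\Theta(n)$ neighbourhoods, each with an exceptional set of size $\Theta(\eps^2 p^2n^2/(p|U|))=\Theta(\eps^2 pn)$, so the union is of order $\eps^2 pn^2/\ell$, which dwarfs the pool $|U|=\Theta(n)$ as soon as $pn\gg \eps^{-2}$; a union bound cannot close this, and no alternative mechanism is given. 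So the proof is incomplete at its crux.

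The paper avoids tracking intersections with individual neighbourhoods altogether. Using Corollary~\ref{cor:partition} it splits $U=U_1\dcup U_2$ so that every $a_i,b_i$ keeps degree at least $\delta'p|U|/4$ to each half. In the first phase it connects, via Proposition~\ref{prop:find_paths}, every pair whose two endpoints still have degree at least $\delta'p|U|/8$ into the unused part $U_1'$ of $U_1$; since $|U_1'|\ge |U|/4$ throughout, Fact~\ref{fact:irregular}~\ref{fact:irre1} shows that at most $O(\eps p^2 n)\le \delta'p|U|/(8\ell)$ indices can ever be stuck. These few leftover pairs are then finished greedily inside the untouched reserve $U_2$, where the total consumption is at most $\ell\cdot\delta'p|U|/(8\ell)=\delta'p|U|/8$, i.e.\ at most half of the guaranteed degree into $U_2$, so degrees never collapse. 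If you want to salvage your single-pass approach you would need some such reserve (or a genuinely new pseudorandom-spreading argument); adopting the two-phase reserve is the simplest fix.
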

\begin{proof}
Fix $\eps_0\le \delta' \beta 2^{-(\ell+6)}/\ell$. Firstly, using Corollary \ref{cor:partition}, in polynomial time, we can split $U$ into $U=U_1 \dcup U_2$ such that $|U_1|=|U_2|=|U|/2$ and $\deg(a_i,U_b),\deg(b_i,U_b)\geq \delta' p|U|/4$ for all $i$ and $b=1,2$. We will build our paths algorithmically in two phases, first using vertices of $U_1$ and then vertices of $U_2$. We initiate by letting $\cQ'=\emptyset$, $U_1'=U_1$ and $U_2'=U_2$. We will use $\cQ'$ to denote our intermediate family of paths and $U_1',U_2'$ the remaining sets of vertices that we can use. Note that throughout we will have $|V(\cQ)|\leq r\ell\leq |U|/8$, and thus $|U_1'|,|U_2'|$ will always have size at least $|U|/4$. 

We proceed as follows. If there is an $i\in [r]$ such that $\deg(a_i,U_1'),\deg(b_i,U_1')\ge \delta'p|U|/8$, then using Proposition \ref{prop:find_paths}, in polynomial time  
we find a length $\ell-2$ path $P_i$ from a vertex in $N(a_i)\cap U_1'$ to a vertex in $N(b_i)\cap U_1'$ using vertices in $U_1'$. 
Add $Q_i:=a_i$-$P_i$-$b_i$ to $\cQ$ and delete the vertices of $P_i$ from $U_1'$. At the end of this phase, let $I\subseteq [r]$ be the remaining indices. 
Since each vertex appears at most twice in $(a_i,b_i)_{i\in[r]}$, by Fact \ref{fact:irregular}~\ref{fact:irre1}, we have that
\[
|I| \leq \frac{4\eps^2 p^2 n^2}{|U_1'|}\le \frac{4\eps^2p^2n^2}{\beta n/4} \le 16 \eps_0 p^2n \leq \delta'p^2|U|/(8\ell) \le \delta'p|U|/(8\ell),
\] 
where we used $|U_1'|\ge |U|/4 \ge \beta n/4$, and $\eps \le \eps_0\le \delta' \beta 2^{-(\ell+6)}/\ell$.
Now we run the process again, using $U_2$ in place of $U_1$. As $|V(\cQ')\cap U_2|\leq \delta'p|U|/8$ throughout, we can proceed greedily by the degree assumptions and complete the family $\cQ$. 
Note that in each step, we need to screen the degrees of the remaining pair $a_i$ and $b_i$. 
The application of Proposition~\ref{prop:find_paths} then  runs in polynomial time  and so 
the whole algorithm runs in polynomial time.
\end{proof}


\subsection{An explicit template} \label{subsec: template}

A \emph{template} $T$ with \emph{flexibility} $m\in \mathbb{N}$ is a
bipartite graph on $7m$ vertices with vertex parts $I$ and
$J=J_1 \dcup J_2$, such that $|I|=3m$, $|J_1|=|J_2|=2m$, and for any
$\bar{J}\subset J_1$, with $|\bar{J}|=m$, the induced graph
$T[V(T)\setminus\bar{J}]$ has a perfect matching. We call $J_1$ the
\textit{flexible set} of vertices for the template. 

Sparse templates, with maximum degree smaller than some absolute constant, are very useful in absorption arguments and can be used to design robust absorbing structures. 
 Montgomery first introduced the use of such templates when applying the absorbing method in his work on spanning trees in random graphs~\cite{M14a,M19}. Ferber, Kronenberg and Luh \cite{FKL16} followed the same argument as Montgomery (with some small adjustments) when studying the 2-universality of the random graph. Kwan \cite{Kwan16} also used sparse templates  to study 
random Steiner triple systems, generalising the template to a hypergraph setting and using it to define an absorbing structure for perfect matchings. 
Further applications were given by Ferber and Nenadov~\cite{FN17} in their work on universality in the random graph, recently by the current authors in~\cite{HKMP18} which was the first use of the method in the context of pseudorandom graphs,
  and by Nenadov and Pehova \cite{nenadov2018ramsey} who used the method to study a variant of the Hajnal-Szem\'eredi Theorem. 
The final three papers mentioned all adapt the method to give absorbing structures which output disjoint copies of a fixed graph $H$ (a partial $H$-factor), however the different absorbing structures used are interestingly all  significantly distinct. 

It is not difficult to prove the existence of sparse
templates for large enough~$m$ probabilistically; see
e.g.~\cite[Lemma~2.8]{M14a}. As we wish to give a completely algorithmic proof, in this section we show how to build a template $T$ efficiently.
We use the following result of Lubotzky, Phillips and Sarnak~\cite{LPS88}.
\begin{theorem}\cite{LPS88}\label{thm:raman}
For primes $p, q\equiv1 \pmod 4$ such that $p$ is 
a quadratic residue modulo $q$, one can construct an explicit $(p+1)$-regular Ramanujan graph $G$ in polynomial time (in $q$) {with $(q^3-q)/2$ vertices}.
\end{theorem}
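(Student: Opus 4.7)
The plan is to invoke the explicit Cayley-graph construction of Lubotzky, Phillips and Sarnak on $\mathrm{PSL}(2,\mathbb{F}_q)$. There are three pieces: producing a symmetric generating set of the right size from integer quaternions, checking that the resulting Cayley graph has the stated regularity and number of vertices, and verifying the (deep) spectral bound that justifies the name ``Ramanujan''.

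First I would build the generating set. By Jacobi's four-square theorem the odd prime $p$ has exactly $8(p+1)$ representations as a sum of four squares, which correspond to $8(p+1)$ Lipschitz quaternions of norm $p$. Factoring out the unit group $\{\pm 1,\pm i,\pm j,\pm k\}$ (and fixing a parity convention on the coefficients) yields a set $S=\{\alpha_1,\ldots,\alpha_{p+1}\}$ of integer quaternions of norm $p$, closed under conjugation $\alpha\mapsto\bar\alpha$. Second, embed the quaternion algebra into $M_2$ over $\mathbb{F}_{q^2}$ in the standard way, so that each $\alpha_i$ reduces to a matrix of determinant $p$ in $\mathrm{GL}(2,\mathbb{F}_q)$. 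The hypothesis that $p$ is a quadratic residue mod $q$ (with $p,q\equiv 1\bmod 4$) lets us pick $\sqrt{p}\in\mathbb{F}_q$ and divide, landing in $\mathrm{SL}(2,\mathbb{F}_q)$; reducing further to $\mathrm{PSL}(2,\mathbb{F}_q)$ gives elements $g_1,\ldots,g_{p+1}$. Since $S$ is closed under conjugation and conjugation corresponds to inversion in $\mathrm{PSL}(2,\mathbb{F}_q)$ (once one has normalized the norm), the set $\{g_i\}$ is symmetric, and the Cayley graph
\[
X^{p,q}:=\mathrm{Cay}\bigl(\mathrm{PSL}(2,\mathbb{F}_q),\{g_1,\ldots,g_{p+1}\}\bigr)
\]
is an undirected $(p+1)$-regular graph on exactly $|\mathrm{PSL}(2,\mathbb{F}_q)|=(q^3-q)/2$ vertices. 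Enumerating $\mathrm{PSL}(2,\mathbb{F}_q)$ and writing down all $p+1$ neighbours of each vertex via left-multiplication by the $g_i$ is clearly polynomial in $q$ (once $p$ is fixed, or more generally polynomial in $p$ and $q$), so the construction is explicit in the algorithmic sense.

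The main obstacle, and really the whole content of the theorem, is the Ramanujan spectral bound $|\lambda|\le 2\sqrt{p}$ for every non-trivial eigenvalue of the adjacency operator of $X^{p,q}$. The key idea here is arithmetic rather than combinatorial: identify the adjacency operator on $X^{p,q}$ with the action of the Hecke operator $T_p$ on a space of automorphic forms on the definite quaternion algebra over $\mathbb{Q}$ ramified at $p$ and $\infty$. Via the Jacquet-Langlands correspondence these quaternionic automorphic forms match classical weight-$2$ holomorphic cusp forms on congruence subgroups of $\mathrm{SL}(2,\mathbb{Z})$, and on the latter the bound $|a_p|\le 2\sqrt{p}$ on $T_p$-eigenvalues is exactly the Ramanujan--Petersson conjecture, proved by Eichler in the weight-$2$ case (and in full generality via Deligne's proof of the Weil conjectures). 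I would cite these results rather than attempt to reprove them; the combinatorial part of the argument, by contrast, is entirely elementary once $S$ is in hand.
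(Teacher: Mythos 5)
This statement is quoted directly from Lubotzky--Phillips--Sarnak and the paper offers no proof of its own, so your outline --- the quaternionic generating set via Jacobi's four-square theorem, reduction to $\mathrm{PSL}(2,\mathbb{F}_q)$ using the hypothesis that $p$ is a quadratic residue mod $q$, the count $(q^3-q)/2$ of vertices, and deferral of the eigenvalue bound $2\sqrt{p}$ to Eichler/Deligne via Jacquet--Langlands --- is correct and is essentially the argument of the cited source. The only small points you leave implicit are that the Cayley graph is connected (and non-bipartite in the residue case), which is needed for the Ramanujan property as stated, and that $\sqrt{p}\bmod q$ can indeed be computed in time polynomial in $q$; both are standard.
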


{A Ramanujan graph, by definition, is a $d$-regular graph all of whose  eigenvalues (other than $d$ and, if bipartite, $-d$) are in absolute value at most $2\sqrt{d-1}$.} We {will} in fact 
use a bipartite Ramanujan graph {constructed as follows}.
{Consider} the graph $G$ provided by Theorem~\ref{thm:raman} -- take $V_1$ and $V_2$ as two identical copies of $V(G)$, and join $v_1\in V_1$ and $v_2\in V_2$ if and only if the preimages of $v_1$ and $v_2$ in $V(G)$ form an edge of $G$. {It is clear that this bipartite Ramanujan graph is still $d$-regular and satisfies the expander mixing lemma \eqref{eq:EML} for all $A\subseteq V_1$ and $B\subseteq V_2$, where $n$ is the number of vertices in each part, and $\lambda=2\sqrt{d-1}$.}

\begin{proposition}\label{prop: perfect matching}
Let $d\ge 144/\alpha^2$.
Let $G$ be a {bipartite $d$-regular Ramanujan graph on vertex set $V_1 \dcup V_2$, with $|V_1|=|V_2|=n$.}
Suppose $U{\subseteq V_1}$ and $W{\subseteq V_2} $ are 
vertex subsets of $V(G)$ such that $|U|=|W|=\alpha n$  and $\deg(w, U)\ge \alpha d/3$ for any $w\in W$ and $\deg(u, W)\ge \alpha d/3$ for any $u\in U$.
Then $G[U, W]$ contains a perfect matching.
\end{proposition}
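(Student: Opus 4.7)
The plan is to verify Hall's matching condition for $G[U,W]$. Since $|U|=|W|=\alpha n$, it suffices to show $|N_G(S)\cap W|\ge |S|$ for every $S\subseteq U$. Suppose for contradiction that some $S\subseteq U$ violates this, and set $X:=N_G(S)\cap W$ and $T:=W\setminus X$, so that $e_G(S,T)=0$. Applying the expander mixing lemma \eqref{eq:EML} to the pair $(S,T)$ yields
\[
\frac{d|S||T|}{n}\le 2\sqrt{(d-1)|S||T|},
\]
and hence $|S||T|\le 4(d-1)n^2/d^2\le 4n^2/d\le \alpha^2 n^2/36$ using $d\ge 144/\alpha^2$.

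Because $|U|=|W|$, Hall's condition is symmetric between the two sides: a violation on the $U$-side with set $S$ produces a violation on the $W$-side with set $T$, since $N_G(T)\cap U\subseteq U\setminus S$ gives $|N_G(T)\cap U|\le |U|-|S|<|T|$. In particular, the case $|S|>\alpha n/2$ and $|T|>\alpha n/2$ is impossible, since it would force $|S||T|>\alpha^2 n^2/4$, contradicting the bound above. Swapping the roles of $U$ and $W$ if necessary, I may therefore assume $|S|\le\alpha n/2$, so that $|T|\ge \alpha n/2$ and the EML bound sharpens to $|S|\le 8n/(\alpha d)\le \alpha n/18$.

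The decisive step is a second application of the expander mixing lemma, this time to the pair $(S,X)$. The minimum degree hypothesis supplies $e_G(S,X)\ge |S|\alpha d/3$, because every vertex of $S$ sends all of its at least $\alpha d/3$ neighbours in $W$ into $X$. On the other hand, EML together with $|X|<|S|\le\alpha n/18$ gives
\[
e_G(S,X)\le\frac{d|S||X|}{n}+2\sqrt{(d-1)|S||X|}\le\frac{d|S|\alpha}{18}+2|S|\sqrt{d}.
\]
Comparing the two bounds and dividing through by $|S|$ produces $5\alpha d/18\le 2\sqrt{d}$, equivalently $d\le 1296/(25\alpha^2)$, contradicting $d\ge 144/\alpha^2$.

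The main obstacle I anticipate is that a single EML application to $(S,T)$ is not strong enough on its own to rule out Hall violators when $d$ is a constant independent of $n$: it only upper-bounds $|S||T|$, which pins $|S|$ to be small but does not directly force $|X|\ge|S|$. The trick is to recycle the smallness of $|S|$ obtained from the first EML bound into a second, nontrivial EML estimate on $e_G(S,X)$, and then pit it against the linear lower bound from the minimum degree hypothesis to produce the numerical contradiction.
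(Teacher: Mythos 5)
Your proof is correct and follows essentially the same route as the paper: both verify Hall's condition, reduce by symmetry to violating sets of size at most $\alpha n/2$, and then play the expander mixing lemma (with $\lambda\le 2\sqrt{d-1}$) against the degree hypothesis $\deg(\cdot)\ge\alpha d/3$ and the condition $d\ge 144/\alpha^2$. The only difference is organizational — you chain the estimate from $e(S,W\setminus N(S))=0$ into a size bound $|S|\le\alpha n/18$ and then always run the degree-versus-EML count on $(S,N(S))$, whereas the paper splits into the cases $|X|\le\alpha n/6$ and $\alpha n/6<|X|\le\alpha n/2$ and uses one of the two estimates in each case.
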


\begin{proof}
We will verify Hall's condition for $G[U, W]$.
First we claim that it suffices to prove the condition $|N(X)\cap W|\ge |X|$, for sets $X\subset U$ of size $|X|\le \lceil|U|/2\rceil=\lceil\alpha n/2\rceil$. Indeed by symmetry, one can then conclude that Hall's condition holds for subsets of $W$ which have size smaller than $\lceil \alpha n/2 \rceil $. Now for $X\subseteq W$ such that $|X|> \alpha n/2$, if $|N(X)\cap W|<|X|$, then setting $Y'$ to be a subset of $W \setminus N(X)$ of size $\alpha n- |X|+1$ we have that $|Y'|\le \lceil \alpha n/2 \rceil $ and so from above we can conclude that $|N(Y')\cap U|\geq |Y'|$. This contradicts the definition of $Y'$ as we must have that $N(Y')$ intersects $X$ and hence $Y'$ intersects $N(X)$.

So it remains to prove that, for $X\subset U$ with $|X|\le \lceil \alpha n/2 \rceil$, taking $Y=N(X)\cap W$  we have that $|Y|\ge |X|$.
Assume to the contrary that $|Y|< |X|$.
We first assume that $|X|\le \alpha n/6$.
By the degree condition, we obtain that $e(X, Y) \ge |X| \alpha d/3$.
On the other hand, by~\eqref{eq:EML}, we have
\[
e(X, Y)\le \frac dn |X| |Y| + \lambda \sqrt{|X| |Y|} < \frac{\alpha d}{6} |X| + 2\sqrt{d|X||Y|}.
\]
Putting these together, we get $2\sqrt{d|X||Y|} \ge \alpha d|X|/6$.
By $d\ge 144/\alpha^2$, this implies $|Y| \ge |X|$, a contradiction.
Next we assume that $\alpha n/6< |X|\le \lceil \alpha n/2 \rceil$.
By $|W\setminus Y|\ge \alpha n/2 $, ~\eqref{eq:EML} and the fact that  $\alpha^2 d\ge 144$ we have that
\[
e(X, W\setminus Y)\ge \left(\frac dn \sqrt{|X| |W\setminus Y|}- \lambda\right) \left(\sqrt{|X||W\setminus Y|}\right)> \alpha^2 d n/12 - 2\sqrt{d \alpha^2 n^2/12}>0,
\]
   contradicting the definition of $Y$. 
\end{proof}

\begin{lemma} \label{lem:bipartite template}
Let $p\equiv 1\pmod 4$ be a prime such that {$p \ge 68000$.}
For a sufficiently large integer $m$,  a template with flexibility $m$ and maximum degree ${d:=}p+1$ can be constructed in polynomial time.
\end{lemma}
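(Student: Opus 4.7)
The plan is to construct $T$ as an induced bipartite subgraph of a bipartite $d$-regular Ramanujan graph $H$ produced by Theorem~\ref{thm:raman}, then verify the template property via Proposition~\ref{prop: perfect matching}. Using Dirichlet's theorem combined with effective prime number estimates in arithmetic progressions, I would first locate in polynomial time a prime $q\equiv 1\pmod 4$ that is a quadratic residue modulo $p$, such that $N:=(q^3-q)/2$ lies in the window $[4m, 8m]$. Applying Theorem~\ref{thm:raman} and then the bipartite double-cover construction described immediately before Proposition~\ref{prop: perfect matching} yields a bipartite $d$-regular Ramanujan graph $H$ on $V_1\dcup V_2$ with $|V_1|=|V_2|=N$.

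The next step is to select subsets $I\subseteq V_1$ of size $3m$ and disjoint $J_1,J_2\subseteq V_2$ of size $2m$ each, arranged so that every $u\in I$ satisfies both $\deg_H(u,J_1)\le 3d/8$ and $\deg_H(u,V_2\setminus(J_1\cup J_2))\le 3d/8$, while every $w\in J_1\cup J_2$ satisfies $\deg_H(w,V_1\setminus I)\le 3d/4$. The template is then $T:=H[I\cup J_1\cup J_2]$; its maximum degree is at most $d=p+1$ because $H$ is $d$-regular.

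To verify the template property, fix any $\bar J\subseteq J_1$ with $|\bar J|=m$ and set $U:=I$, $W:=(J_1\setminus\bar J)\cup J_2$, both of size $3m$. I would apply Proposition~\ref{prop: perfect matching} to $H$ with $\alpha:=3m/N\in[3/8,3/4]$. The required degree conditions on $U$ and $W$ follow from the choices above; for instance, every $u\in U$ satisfies
\[
\deg_H(u,W)\;=\;d-\deg_H(u,\bar J)-\deg_H(u,V_2\setminus(J_1\cup J_2))\;\ge\;d-3d/8-3d/8\;=\;d/4\;\ge\;\alpha d/3,
\]
and analogously for every $w\in W$. The hypothesis $d\ge 144/\alpha^2$ holds since $\alpha\ge 3/8$ yields $144/\alpha^2\le 1024<68000\le d$. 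Hence Proposition~\ref{prop: perfect matching} produces a perfect matching in $H[U,W]=T[V(T)\setminus\bar J]$, which is exactly what the template property demands.

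The main obstacle is the second step: the stated degree conditions must hold for \emph{every} relevant vertex, not merely for typical ones. This I would handle through the expander mixing lemma applied to $H$ (equivalently, the Ramanujan bound $\lambda\le 2\sqrt{d-1}$), which confines the set of vertices violating any one such degree bound to size $O(m/d)$; since $d\ge 68000$, each such ``bad'' set is negligible compared to $m$. Concretely, I would first partition an arbitrary $4m$-subset of $V_2$ into $J_1$ and $J_2$ using Corollary~\ref{cor:partition} to balance typical degrees, then choose $I$ by selecting its complement $V_1\setminus I$ to simultaneously contain the few vertices of $V_1$ that violate the bounds on $u$ and to satisfy the bound on $w$; the latter is secured via a derandomised subset selection through Theorem~\ref{thm:derandomised partition}. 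All steps run in polynomial time in $m$.
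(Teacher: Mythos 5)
Your overall route is the same as the paper's: locate a prime $q$ in a suitable arithmetic progression (your reciprocity reformulation of the quadratic residue condition is fine), take the LPS Ramanujan graph from Theorem~\ref{thm:raman} and its bipartite double cover, select $I$, $J_1$, $J_2$ with degree control, and verify the template property through Proposition~\ref{prop: perfect matching}. The verification step is sound \emph{given} your degree hypotheses. The gap is in the selection step, and it comes from the window you allow for $N=(q^3-q)/2$. You require every $u\in I$ (with $|I|=3m$) to satisfy $\deg_H(u,J_1)\le 3d/8$ and $\deg_H(u,V_2\setminus(J_1\cup J_2))\le 3d/8$, and you claim that the vertices violating either bound form a set of size $O(m/d)$ by the expander mixing lemma. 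But the mixing lemma only bounds the number of vertices whose degree into a fixed set deviates from the proportional average $d|S|/N$; it cannot force degrees below that average. If $N$ is near $4m$, then $J_1\cup J_2$ is essentially all of $V_2$, so $\deg_H(u,J_1\cup J_2)\approx d$ for every $u$, and any balanced split (which is all Corollary~\ref{cor:partition} gives you) makes $\deg_H(u,J_1)\approx d/2>3d/8$; a mixing-lemma count then shows that all but $O(N/d)$ vertices of $V_1$ \emph{violate} your first bound, so no admissible $I$ of size $3m$ exists. Symmetrically, if $N$ is near $8m$, then $|V_2\setminus(J_1\cup J_2)|\approx N/2$ and the second bound fails for almost every $u$. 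So as written, with $N$ allowed anywhere in $[4m,8m]$, the construction of $I,J_1,J_2$ can be impossible.

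The fix is straightforward and is exactly what the paper does: use the effective prime-in-progression step to pin $N$ into a narrow window (relative width of a few percent is available via Siegel--Walfisz, as in the paper's choice $q\in[(21m)^{1/3},1.01(21m)^{1/3}]$), chosen so that the proportional averages lie strictly on the correct side of your thresholds with a constant margin --- for instance $N\approx 6m$ is compatible with your constants $3d/8$ and $3d/4$, while the paper takes $N\in[10m,11m]$, places $V_1$ of size $3m$ on one side and $V_2$, $J_1$ of size $2m$ each disjointly on the other, and uses the uniform threshold $d/10$ with $\alpha=3m/N\in[3/11,3/10]$. With such a window the bad sets really are of size $O(m/d)\ll m$ and can be removed and replaced (the paper's delete-and-refill procedure), after which Proposition~\ref{prop: perfect matching} applies just as you indicate.
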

\begin{proof}
{It follows from the} Siegel--Walfisz theorem~\cite{Wal36} {that} for large $x$, the density in $[x]$ of the set of primes which are $1\pmod{4p}$ is  of the order $\Theta(1/\log x)$.  Therefore for all  sufficiently large $m$ we can pick a prime $q\equiv {1\pmod{4p}}$ between 
{$(21m)^{1/3}$ and $1.01(21m)^{1/3}$}.
{Thus $20m\le q^3-q \le 22m$.}
{Using quadratic reciprocity to infer that $p$ is a quadratic residue modulo $q$, we have by} 
Theorem~\ref{thm:raman}
{that} we can construct in polynomial time a bipartite $d$-regular Ramanujan graph $G=(X\cup Y, E)$ with 
{$10m\le |X| =|Y|\le 11m$} 
and second eigenvalue $\lambda \le 2\sqrt{d}$.
We first show that for any set $U\subseteq X$ (or $Y$) of size at least $3m/2$, there are at most 
{$34000m/d$} 
vertices $v$ in $Y$ (or $X$) such that 
{$\deg(v, U)< d/10$}.
Indeed, denote by $B$ the set of such vertices $v$. Clearly we have 
{$e(U, B) < d |B|/10$}.
On the other hand, by~\eqref{eq:EML}, we have
{\[
\frac{d |B|}{10} > e(U, B) \ge \frac d{11m} |B| |U| - \lambda \sqrt{|B||U|} \ge \frac{3d|B|}{22} - 2\sqrt{d|B|\cdot 11m}.
\]
This implies that $2d|B|/55 < 2\sqrt{11d|B|m}$, that is, $|B| < 33275m/d<34000m/d$, as claimed.} 

Now take arbitrary sets $V_1''\subset X$, $V_2''\subset Y$ such that $|V_1''|=3m$ and $|V_2''|=2m$.
Next, we sequentially delete vertices from $V_1''$ and $V_2''$ as follows.
\begin{itemize}
\item Initiate with $V_i':=V_i''$ for $i=1,2$.
\item If there is a vertex $v\in V_1'$ such that $\deg(v, V_2') < 
{d/10}$,
then delete $v$ from $V_1'$, \label{item2}
\item If there is a vertex $v\in V_2'$ such that $\deg(v, V_1') < 
{d/10}
$, then delete $v$ from $V_2'$. \label{item3}
\end{itemize}
Note that since $|V_i''| - {34000}
m/d \ge 3m/2$, by our claim above, at most ${34000}
m/d$ vertices will be deleted from each set.
Denote by $V_1'$ and $V_2'$ the resulting sets.
Next, since there are at most $
{34000}
m/d$ vertices that have degree less than $
{d/10}
$ to $V_i'$, $i=1$, $2$, respectively, we can add vertices to $V_1'$ and $V_2'$ and obtain $V_1$ and $V_2$ such that $|V_1|=3m$, $|V_2|=2m$ and $\deg(v, V_i)\ge 
{d/10}
$ for any $v\in V_{3-i}$, $i=1,2$.
Finally, we pick $J_1$ as a set of $2m$ vertices in $Y\setminus V_2$ which have degree at least
{$d/10$}
to $V_1${.}


We claim that $T=G[V_1\cup V_2\cup J_1]$ is the desired template with flexible set $J_1$.
It remains to check the property of $T$.
For this, take any set $J'$ of $m$ vertices in $J_1$ and consider $G[V_1, V_2\cup J']$.
Since the assumptions of Proposition~\ref{prop: perfect matching} are satisfied with $\alpha = 3m/|X|
{\in[3/11, 3/10]}$
, $G[V_1, V_2\cup J']$ has a perfect matching and we are done.
For the running time, note that in each of the steps above, it is enough to query the neighbourhood of a vertex, which can be done in constant time.
So the overall running time is polynomial in $m$.
\end{proof}

\section{Proof of Theorem~\ref{thm:const_cycles}}  \label{Sec: Proof of Theorem constant cycles}

In~\cite{HKMP18} an \emph{absorbing structure} for cliques was defined. 
Here we generalise it for cycles as follows. Assume that 
$T=(I, J_1\cup J_2, E)$ is a bipartite template with flexibility
$m$, maximum degree $\Delta(T)\leq K$ and flexible set $J_1$.  
It will be convenient to identify $T$ with its edges which may be viewed as 
the corresponding subset of tuples $(i,j)\in [3m]\times[4m]$, hence we will also think of $I$ as $[3m]$, $J_1$ as $[2m]$, 
 $J_2$ as $[2m+1,4m]:=\{2m+1,\ldots, 4m\}$ and $J=J_1\cup J_2$.
  
 An \emph{absorbing structure for cycles of length $s+2$} is a tuple $\cS=(T, \cP_1, A, \cP_2, Z, Z_1)$ which consists 
of the template $T$ with flexibility $m$, 
the two sets $\cP_1$ and $\cP_2$ of
vertex-disjoint paths of fixed length $s$ and three vertex sets $A$, $Z$ and $Z_1$ with $Z_1\subset Z$. 
Furthermore, the sets  $V(\cP_1)$, $V(\cP_2)$, $A$ and $Z$ are pairwise disjoint and 
 with the labelling $Z_1=\{z_1,\dots, z_{2m}\}$, 
$Z_2=\{z_{2m+1}, \dots, z_{4m}\}$ (so that  $Z:=Z_1\cup Z_2$),
$\cP_1:=\{P^1, P^2, \dots, P^{3m}\}$,
$A=\{a_{ij}: (i,j)\in E(T)\}$ and
$\cP_2=\{P_{ij}: (i,j)\in E(T)\}$, the following holds in $G$ for   $(i,j)\in E(T)$:
\begin{itemize}
\item $a_{ij}$ is adjacent to the ends of $P^{i}$, i.e.\ closes a cycle on $s+2$ vertices,
\item each $a_{ij}$ is adjacent to the ends of $P_{ij}$,
\item each $z_{j}$ is adjacent to the ends of $P_{ij}$.
\end{itemize}

It is well known that finding  a maximum matching in  a graph can be done in polynomial time. Using this and unpacking the definition of the absorbing structure leads to the following fact. 

\begin{fact}\label{fact:absorbing}
  The absorbing structure $\cS=(T, \cP_1, A, \cP_2, Z, Z_1)$ has the property
  that, for any subset $\bar Z\subseteq Z_1$ with $|\bar Z|=m$, the
  removal of $\bar Z$ leaves a graph with a $C_{s+2}$-factor, which can be found in polynomial time.
\end{fact}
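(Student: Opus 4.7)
The plan is to exploit the defining property of the template $T$. Identify the flexible set $J_1 = [2m]$ of $T$ with $Z_1 = \{z_1,\dots,z_{2m}\}$ via the given labelling, so that the deleted set $\bar Z \subseteq Z_1$ of size $m$ corresponds to a subset $\bar J \subseteq J_1$ of size $m$. By the template property, $T[V(T)\setminus \bar J]$ admits a perfect matching $M$. Since $|I|=3m=|J\setminus\bar J|$, the matching $M$ has exactly $3m$ edges and saturates both sides of the residual bipartition.

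Using $M$, I construct the $C_{s+2}$-factor by a double-use strategy for the template edges. For each $(i,j)\in M$, close the path $P^i$ into a cycle of length $s+2$ by attaching $a_{ij}$ (valid by the first adjacency property of the absorbing structure), and simultaneously close $P_{ij}$ into a cycle of length $s+2$ by attaching $z_j$ (third adjacency property). For each remaining edge $(i,j)\in E(T)\setminus M$, close $P_{ij}$ into a cycle of length $s+2$ by attaching $a_{ij}$ (second adjacency property). Each resulting closed object is genuinely a $C_{s+2}$, since a path of length $s$ has $s+1$ vertices, and appending a vertex adjacent to both endpoints yields a cycle on $s+2$ vertices.

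To verify coverage on $V(\cP_1)\cup V(\cP_2)\cup A\cup(Z\setminus\bar Z)$: every $P^i\in\cP_1$ is used, as $M$ saturates $I$; every $P_{ij}\in\cP_2$ is closed exactly once (by $z_j$ if $(i,j)\in M$, by $a_{ij}$ otherwise); every $a_{ij}\in A$ is used exactly once (closing $P^i$ if $(i,j)\in M$, closing $P_{ij}$ otherwise); and every $z_j$ with $j\in J\setminus\bar J$ is used exactly once, so all of $Z_2$ (untouched since $\bar J\subseteq J_1$) together with $Z_1\setminus\bar Z$ is absorbed. Pairwise vertex-disjointness of the cycles follows from the disjointness of $V(\cP_1),V(\cP_2),A,Z$ and the injectivity of the above assignment.

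For the running time, the residual template $T[V(T)\setminus \bar J]$ is a graph on $6m$ vertices with at most $3m\,\Delta(T)=O(m)$ edges, using $\Delta(T)\le K$ as a constant. The Micali--Vazirani algorithm then produces the perfect matching $M$ in time $O(|E||V|^{1/2})=O(m^{3/2})$. Enumerating the cycles once $M$ is known is linear in the output, hence $O(m)$ (with $s$ a constant), so the total is $O(m^{3/2})$. There is no real obstacle: the essential observation is the dual role played by each template edge $(i,j)$, which precisely matches the budgets $|A|=|\cP_2|=|E(T)|$ and $|Z|=|J|$ against $|\cP_1|=|I|$ so that the matching/non-matching dichotomy covers everything except $\bar Z$.
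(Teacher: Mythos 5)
Your proposal is correct and follows essentially the same argument as the paper: extract the perfect matching $M$ of $T[V(T)\setminus\bar J]$ guaranteed by the template property via the Micali--Vazirani algorithm, then form the cycles $\{a_{ij}\}\cup P^i$ and $\{z_j\}\cup P_{ij}$ for $(i,j)\in M$ and $\{a_{ij}\}\cup P_{ij}$ for $(i,j)\in E(T)\setminus M$. The only difference is that you spell out the coverage/disjointness check and the $|E(T)|=O(m)$ bound (via $\Delta(T)\le K$) explicitly, which the paper leaves implicit.
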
 

\begin{proof}
  By the property of the template $T\subseteq [3m]\times [4m]$, there is a
  perfect matching~$M$ in $[3m]\times ([4m]\setminus \bar{J})\cap T$ with $\bar{J}:=\{j\colon z_j\in\bar{Z}\}$. Furthermore, we can find $M$ in polynomial time.
  
  
  Then for each edge $(i,j)\in M$, we take the $(s+2)$-cycles on
  $\{a_{ij}\}\cup {P}^{i}$ and $\{z_{j}\}\cup {P}_{ij}$; for the
  edges $(i,j)\in E(T)\setminus M$, we take the
  $(s+2)$-cycle on $\{a_{ij}\}\cup {P}_{ij}$.  This gives the desired
  $C_{s+2}$-factor.
\end{proof}

The following lemma is a variant of Lemma~2.7 from~\cite{HKMP18}.   
\begin{lemma}\label{lem:absorbing_structure}
  {Let $K:=68042$}. For every $\delta>0$, $\ell\ge 4$ and $\alpha\in (0,\alpha(\ell)]$ (where $\alpha(\ell):=1/(60\ell(K+2))$) there 
  exists $\eps_0>0$ such that for all $\eps\in (0,\eps_0)$ there
  is an $n_0\in \NN$ such that the following holds for all
  $n\ge n_0$.  Let~$G$ be a $(p, \lambda)$-graph with $n$ vertices, $p\in(0,1/3]$,
  $\lambda \le \eps p^{2}n$, $\delta(G) \geq \delta pn$ and suppose $m=\alpha n$. 
   Then in polynomial time we can find an absorbing structure $\cS=(T, \cP_1, A, \cP_2, Z, Z_1)$ for cycles of length $\ell$  with
  flexibility~$m$ in $G$. Further, one can find in polynomial time a set $W\subseteq V(G)\setminus V(\cS)$, with $|W|= n/4$ and $\deg(v,W)\geq \delta p |W|/8$ for all vertices $v$ of $G$. 
\end{lemma}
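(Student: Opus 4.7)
The plan is to build $\cS$ componentwise, using the tools of Section~\ref{Sec: Auxiliary results}. First, invoke Lemma~\ref{lem:bipartite template} to construct the template $T$ on vertex set $I\cup J_1\cup J_2$ with flexibility $m=\alpha n$ and maximum degree $K$. Second, apply Corollary~\ref{cor:partition} (with $U=W=V(G)$) to partition $V(G)$ into a power-of-two number of equally sized parts, then group these into five disjoint sets $W, V_1, V_2, V_3, V_4$ with $|W|=n/4$ and each $|V_i|=\Omega(n)$, in such a way that every $v\in V(G)$ has degree at least $(\delta p/2)|V_i|$ into each $V_i$ and at least $(\delta p/2)|W|$ into $W$. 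Declare $W$ the required reservoir; this already yields $\deg(v,W)\ge \delta p|W|/8$. The remaining four parts will host $\cP_1$, $A$, $Z$, and the internal vertices of $\cP_2$, respectively.

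Now we build the pieces in sequence. For $\cP_1$: iteratively apply Lemma~\ref{lem:long_paths} in $V_1$ (removing each path's vertices before searching for the next) to extract $3m$ vertex-disjoint paths $P^1,\ldots,P^{3m}$ of length $\ell-2$, with endpoints $x^i,y^i$; since $3m(\ell-1)$ is a tiny fraction of $|V_1|$ by the choice $\alpha\le 1/(60\ell(K+2))$, the hypothesis of Lemma~\ref{lem:long_paths} is maintained throughout. For $A$: for each edge $(i,j)\in E(T)$ in any order, greedily pick $a_{ij}\in V_2$ adjacent to both $x^i$ and $y^i$ and distinct from every previously chosen $a$; two applications of Fact~\ref{fact:irregular}~\ref{fact:irre1} in $V_2$ show that $|N(x^i)\cap N(y^i)\cap V_2|=\Omega(p^2 n)$, which (using the lower bound on $p$ from Proposition~\ref{prop:lower_bd}) comfortably exceeds $|E(T)|\le 3Km$. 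For $Z$: process $j\in J$ one by one and use Fact~\ref{fact:irregular}~\ref{fact:irre2} to pick $z_j\in V_3$ (distinct from previously chosen $z$'s) such that for every $i$ with $(i,j)\in E(T)$ one has $|N(z_j)\cap N(a_{ij})\cap V_4|\ge (p/2)|N(a_{ij})\cap V_4|$; this succeeds since the number of constraints at each $j$ is at most $K$. Set $Z_1:=\{z_j:j\in J_1\}$. Finally for $\cP_2$: iterate over $(i,j)\in E(T)$, and for each let $M_{ij}:=N(a_{ij})\cap N(z_j)\cap V_4^{\mathrm{rem}}$, where $V_4^{\mathrm{rem}}$ is the not-yet-used portion of $V_4$. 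By construction $|M_{ij}|=\Omega(p^2 n)$, so Proposition~\ref{prop:find_paths} (with $A=B=M_{ij}$ and inner vertices from $V_4^{\mathrm{rem}}$) produces a path $P_{ij}$ of length $\ell-2$ whose ends lie in $M_{ij}$; remove its vertices from $V_4^{\mathrm{rem}}$ and iterate.

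The total number of vertices placed into $\cS$ is at most $3m(\ell-1)+|E(T)|+4m+|E(T)|(\ell-1)=O(Km\ell)$, which by the choice of $\alpha$ is a tiny fraction of each $|V_i|$, so the remaining sets stay large throughout and all the invocations above are justified. Polynomial runtime is immediate since each phase invokes a polynomial-time subroutine at most $O(m)=O(n)$ times. The main obstacle is the careful bookkeeping of sizes at every stage, particularly in the $\cP_2$ phase where, as paths are built one at a time, one must verify that the common neighborhoods $M_{ij}$ remain of size $\Omega(p^2 n)$ after removing $O(m\ell)$ vertices. This hinges on the lower bound $p=\Omega(n^{-1/3})$ from Proposition~\ref{prop:lower_bd}, which ensures $p^2 n\to\infty$ and so dominates any constant-order term involving $K$ or $\ell$.
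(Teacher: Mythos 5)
Your overall architecture (explicit template from Lemma~\ref{lem:bipartite template}, derandomised partition from Corollary~\ref{cor:partition}, then greedy construction of $\cP_1$, $A$, $Z$, $\cP_2$) matches the paper's, but the way you build $A$ contains a genuine gap that the paper is specifically designed to avoid. You take arbitrary paths $P^i$ from Lemma~\ref{lem:long_paths} and then claim, via ``two applications of Fact~\ref{fact:irregular}~\ref{fact:irre1}'', that their endpoints $x^i,y^i$ satisfy $|N(x^i)\cap N(y^i)\cap V_2|=\Omega(p^2n)$. Fact~\ref{fact:irregular} only bounds the \emph{number} of vertices with small degree into a fixed set; it says nothing about the two \emph{specific} vertices $x^i,y^i$, and with $\lambda=\eps p^2 n$ bijumbledness gives no control at all on the degree of a single vertex into a set of size $O(pn)$ (the error term $\lambda\sqrt{|A|}$ swamps $p|A|$ there). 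Two given vertices in such a graph may have empty common neighbourhood. This is exactly why the paper does not find plain paths but copies of $\Clm$ via Fact~\ref{fact:cyclecount}: the path's endpoints are \emph{constructed} to come with $K$ common neighbours, and those $K$ vertices (sitting inside vertex-disjoint $\Clm$-copies) \emph{are} the $a_{ij}$'s, so no separate selection is needed. Moreover, even granting $|N(x^i)\cap N(y^i)\cap V_2|=\Omega(p^2n)$, your greedy must avoid all previously chosen $a$'s, of which there are up to $|E(T)|=\Theta(Kmn/n)\cdot n=\Theta(n)$; since $p^2n=o(n)$ for $p=o(1)$ (the lower bound $p=\Omega(n^{-1/3})$ from Proposition~\ref{prop:lower_bd} only gives $p^2n=\Omega(n^{1/3})$), the claim that the pool ``comfortably exceeds $|E(T)|\le 3Km$'' is false.

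Two further points, fixable but real as written. First, in the $\cP_2$ phase you apply Proposition~\ref{prop:find_paths} with $A=B=M_{ij}$ of size $\Omega(p^2n)$, but that proposition requires $|A|,|B|\ge 2^{\ell-1}\eps pn$, which fails once $p$ is below a constant; the paper instead picks two vertices $y_1,y_2\in U_{ij}$ of high degree into the unused host set (possible since only $O(\eps^2p^2n)$ vertices are bad for a set of size $\Omega(n)$) and connects $N(y_1)$ to $N(y_2)$, i.e.\ sets of size $\Omega(pn)$, by a path of length $\ell-4$, treating $\ell=4$ separately. Second, your bookkeeping remark does not resolve the real issue: the pools $M_{ij}$ (size $o(n)$) and the neighbourhoods $N(a_{ij})\cap V_4$ (size $\Theta(pn)$) must survive $\Theta(n)$ deletions made by earlier paths, and nothing in your scheme (which chooses all of $Z$ first and all of $\cP_2$ afterwards) prevents those deletions from wiping out a given pool; $p^2n\to\infty$ is irrelevant since the deletions are not constant-order. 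The paper handles this by building the paths $P_{ij}$ immediately after choosing $z_j$, always conditioning on the current partial structure, and by reserving a separate part $V_3$ for a second phase that treats the at most $\eps p^2 n$ indices $j$ whose neighbourhoods have become too small.
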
  
\begin{proof}
  First we choose $\eps_0=\min\{\delta/(400K\ell),2^{-(\ell+6)},\alpha\}$ and let $\eps\in(0,\eps_0)$. Then
  we take $n_0$ large enough. Therefore, owing to Proposition~\ref{prop:lower_bd}, 
  quantities $p^2n$ and $pn$ are large as well.

We consider a partition of $V(G)=V_1\dcup V_2\dcup V_3\dcup V_4$ with $|V_1|=|V_2|=|V_3|=|V_4|=n/4$, such that 
\begin{equation}\label{eq:rand_mindeg}
\deg(v,V_i)\ge \delta p|V_i|/2
\end{equation}
 for all $i\in[4]$ and $v\in V$, as given by Corollary~\ref{cor:partition}. We fix $W=V_4$ and thus the conditions on $W$ are satisfied. We now build our absorbing structure using vertices of $V(G)
\setminus W$. Throughout the proof, we denote the intermediate partial absorbing structure by $\cS'$. Note that an absorbing structure for cycles of length $\ell$ with flexibility $m$ {which uses a template $T$} has at most $3 \ell m({\Delta(T)}+2)$ vertices, and thus, due to the condition on $\alpha$ {and the fact that we will have $\Delta(T)\le K$,} we {will} have that $|V(\cS')| \leq n/20$ throughout the proof.

  Let $T\subseteq [3m]\times [4m]$ be a bipartite template with
  flexibility~$m$ and flexible set $J_1=[2m]$ such that $\Delta(T)\le K$, as provided by Lemma \ref{lem:bipartite template}.
  Pick an arbitrary collection of~$3m$ vertex-disjoint copies of
  $\Clm$ in $V_1$ (using Fact~\ref{fact:cyclecount}). For the $i$th copy
  of $\Clm$, we label the corresponding path on $\ell-2$ edges by $P^{i}$
   (so that the ends of $P^{i}$ have $K$ common neighbours), 
  and we set $\cP_1:=\{P^1, P^2, \dots, P^{3m}\}$.  Then we label
  $A=\{a_{ij}: (i,j)\in E(T)\}$ as the vertices in the classes
  of~$K$ vertices in the copies of $\Clm$ such that each $a_{ij}$ is connected to the ends 
  of $P^i$, i.e.\ forms a copy of $C_\ell$ (we may then discard some
  extra vertices, according to the degree of~$i$ in~$T$).
   
  We will pick $Z=\{z_1,\dots, z_{4m}\}$ and
  $\cP_2=\{P_{ij}: (i,j)\in E(T)\}$ satisfying the definition of the
  absorbing structure as follows. 
  We  choose $Z$ in two phases, where all but at most $\eps p^2n$ vertices for $Z$ will be chosen in\
  the first phase.  
  We first use vertices in $V_1$. 
  We recursively do the following. 
  We pick the smallest index $j\in [4m]$
  (as long as there exists such an index)
  so that $|N_G(a_{ij}, V_1)\setminus V(\cS')|\ge \delta pn/10$ for all $i$ 
  such that $(i,j)\in T$ (there are at most $K$ such $i$). 
  We pick as~$z_{j}$ an arbitrary vertex in
  $V_2\setminus (V(\cS')\cup B_j)$, where
  $B_j$ is the set of vertices $z$ in $G$ such that
  $|(N_G(a_{ij}, V_1)\setminus V(\cS'))\cap
  N_G(z)| < \delta p^2n/20$ for some $i$ with $(i,j)\in E(T)$.  Since
  $|N_G(a_{ij}, V_1)\setminus V(\cS')|\ge \delta pn/10$ and
  $\Delta(T)\le K$, Fact~\ref{fact:irregular}~\ref{fact:irre1} with
  $U=N_G(a_{ij}, V_1)\setminus V(\cS')$ implies
  that $|B_{j}|\le 40K \delta^{-1}\eps^2 pn\le n/8$, and so such a choice always exists. 

  Having chosen $z_j$, our next aim is to construct vertex-disjoint paths $P_{ij}$ of length $\ell-2$, for each $(i,j)\in E(T)$, so    
  that the endpoints of $P_{ij}$ are adjacent to both $a_{ij}$ and $z_j$. 
  For this purpose, we would like to pick two vertices $y_1, y_2$ in 
  $U_{ij}:=(N_G(a_{ij}, V_1)\setminus V(\cS'))\cap N_G(z_j)$, which are supposed to be the ends of the path $P_{ij}$ which we are going to construct. 
  Since $z_j\notin B_j$, we have  $|U_{ij}|\ge \delta p^2n/20$.
  Letting $V_1':=V_1\setminus(V(\cS')\cup U_{ij})$, we have that $|V_1'|\ge n/8$.   
  From 
   Remark \ref{rem:easy bijumbled}, we get that $e(V_1',U_{ij})\ge p|U_{ij}||V_1'|/2$. 
  We consider two cases.   If $\ell=4$ then, since there is a vertex $w$ from $V_1'$ of degree at least 
  $p|U_{ij}|/2\ge \delta p^3n/40\ge 2$ into $U_{ij}$ (by Proposition \ref{prop:lower_bd}),  there is a path $P_{ij}$ of 
  length $2$ with ends (labeled as) $y_1$ and $y_2$ in $U_{ij}$. 
  If $\ell\ge 5$, then by Fact~\ref{fact:irregular}~\ref{fact:irre1} and the choice of $\eps$, we can find two vertices $y_1$ and $y_2\in U_{ij}$, whose degrees into $V_1'$ are at least 
  $pn/30$.  Proposition~\ref{prop:find_paths} then yields the existence of a path 
  of length $\ell-4$ with ends in $N(y_1)\cap V_1'$ and $N(y_2)\cap V_1'$. Together with $y_1$ and $y_2$ 
  this provides us with the desired path $P_{ij}$.
  
  It remains still to deal with the situation (second phase), {when there are no remaining appropriate indices $j\in[4m]$.}
  Let $\tilde{J}\subseteq [4m]$ be the set of those indices $j$ such that 
  for some $\{i, j\}\in T$ we have 
  $|N_G(a_{ij}, V_1)\setminus V(\cS')|< \delta pn/10$.
  Since $|V_1\setminus V(\cS')|\ge n/5$ we have with Fact~\ref{fact:irregular}~\ref{fact:irre1} and $\Delta(T)\le K$ 
  that $|\tilde{J}|\le K(20\eps^2 p^2 n)\le \eps p^2 n$. 
  To finish the embedding, we will use vertices in $V_3$ as well. At any point we will have that $|V(\cS') \cap V_3|\le K|\tilde{J}|\ell \le \delta pn/40$. From~\eqref{eq:rand_mindeg} we get $\deg(v,V_3\setminus V(\cS')) \ge \delta pn/10$ for all vertices $v\in V(G)$ throughout the process and we can proceed as in the two paragraphs above, using $V_3$ in place of $V_1$.

Now we {analyse} the running time. 
Firstly, we pick the copies of $\Clm$ by Fact~\ref{fact:cyclecount}. 
Secondly, to find a desired $j\in [4m]$, we check $|N_G(a_{ij}, V_1)\setminus V(\cS')|$ for all vertices $a_{ij}$; 
with such a $j$, to choose $z_j$, we search through the vertices $z$ not in $V(\cS')$ and check $|(N_G(a_{ij}, V_1)\setminus V(\cS'))\cap
  N_G(z)|$ for at most $K$ such $i$'s.
By Fact~\ref{fact:irregular}~\ref{fact:irre2}, this takes polynomial time. 
At last, we pick the desired path $P_{i j}$ of length $\ell-2$.
If $\ell=4$, then we find the vertex $w\in V_1'$ with degree $2$ to $U_{i j}$ and hence $P_{i j}$ in polynomial time. 
If $\ell\ge 5$, we find $y_1$ and $y_2$ 
and  then apply Proposition~\ref{prop:find_paths}, which runs in polynomial time. 
Therefore, the overall running time is polynomial since 
partitioning as is done by Corollary~\ref{cor:partition} works in polynomial time as well.
\end{proof}

Now we are ready to prove Theorem~\ref{thm:const_cycles}.

\begin{proof}[Proof of Theorem~\ref{thm:const_cycles}]
{Let $K=68042$.}
Let $L_0:=\min\{2^k:k\in \mathbb{N}, 2^k>L\}$ and fix  $\eps \le \eps_0:=\min\{\delta/(8000KL_0^32^{(L_0+6)}),\eps_{\ref{lem:absorbing_structure}}, \eps_{\ref{lem:connections}}\}$, where $\eps_{\ref{lem:absorbing_structure}}$ is as asserted by Lemma~\ref{lem:absorbing_structure} on input $\delta':=\delta/2L_0$, $\alpha(L)$  and $\eps_{\ref{lem:connections}}$ is as asserted by Lemma~\ref{lem:connections} on input $\beta=1/(60L(K+2))$, $\delta'$ and $L$.  
Let $n_0$ be large enough.
First, using Corollary~\ref{cor:partition}, we find a partition of the vertex set of $G$ into sets $V_1\dcup V_2$ such that $|V_1|=n/L_0$  and every vertex $v\in V(G)$ satisfies 
\begin{equation}\label{eq:mindeg}
\deg(v,V_i)\ge \delta p |V_i|/2,
\end{equation} 
for $i\in[2]$.  Here $V_2$ is taken to be the union of all other sets in the equipartition given by Corollary \ref{cor:partition},
thus $|V_2|=(L_0-1)n/L_0$.
Let $F$ be a collection of cycles of lengths in the interval $[4,L]$, whose lengths sum 
up\footnote{We can assume that $F$ has $n$ vertices as if not, we can take a supergraph by 
adding 4-cycles repeatedly. We can then remove up to three vertices from $G$ without 
affecting the properties of $G$ as in the statement of Theorem~\ref{thm:const_cycles}.} to 
$n$. There is (at least) one length $\ell\in [4,L]$ such that $F$ contains at least $n/((L-3)\ell)$ cycles $C_\ell$. We write $F=F'\dcup F_\ell$, where $F_\ell$ consists of cycles of 
length $\ell$ from $F$, while $F'$ contains all other cycles. 
We will embed $F$ into $G$ in two stages. First,  we greedily embed $F'$ into $G[V_2]$. 
This is possible since \[|V(F')|\le \frac{(L-4)n}{L-3}\le \frac{(L_0-4)n}{L_0-3} =\frac{(L_0-1)n}{L_0}-\frac{3n}{L_0(L_0-3)}=|V_2|-\frac{3n}{L_0(L_0-3)}\] and since any set of at least $3n/(L_0(L_0-3))$ vertices in $G$ contains a cycle of any length from the interval $[4,L]$ (see Fact~\ref{fact:cyclecount}).

In the second stage we are left with a vertex set $U\supseteq V_1$ such that $|V(F_\ell)|=|U|$ and $\delta(G[U])\ge \delta pn/(2L_0) \geq \delta' p|U|$,  due to~\eqref{eq:mindeg}. 
All that remains to do is to find a $C_\ell$-factor in $G[U]$. 
We are thus in a position to apply Lemma~\ref{lem:absorbing_structure} to $G[U]$, where one can check that the conditions there are satisfied with respect to $|U|$ and $\delta'$. 
Thus, in polynomial time we can construct an absorbing structure $\cS=(T, \cP_1, A, \cP_2, Z, Z_1)$ for cycles of length $\ell$ with flexibility~$m= \alpha |U|$, where $\alpha:= 1/(60L(K+2))\le \alpha(\ell)$, and a vertex set $W\subset V(G)\setminus V(\cS)$, with $|W|= |U|/4$, such that for any vertex~$v$ in~$G$, we have
  $\deg(v, W) \ge \delta' p|U|/8$. 
  Let $U_0\subset (U \setminus V(\cS))$ be the set of vertices $u$ such that $\deg(u,Z_1)\leq  {p|Z_1|}/{2}$. By 
Fact \ref{fact:irregular}~\ref{fact:irre1}, we have that $|U_0|\leq 4\eps^2 p^2 |U|^2/|Z_1|= 2 \eps^2\alpha^{-1}p^2|U|$. We first incorporate the vertices of $U_0$ into cycles of length $\ell$ using vertices of $W\setminus U_0$ by applying Lemma~\ref{lem:connections} (in polynomial time) to the pairs $\{(u,u):u\in U_0\}$. Let $\cC_1$ be the set of disjoint cycles produced by this process.
  
Now we greedily apply Fact~\ref{fact:cyclecount} to find vertex-disjoint cycles $C_\ell$ in $G[U\setminus (V(\cS)\cup V(\cC_1))]$, until we are left with a set $U_1$ of cardinality at most $2^\ell\eps n$.
What remains is to find a $C_\ell$-factor in $G[U_1\dcup V(\cS)]$. 
Recall that $\deg(u,Z_1)\ge p|Z_1|/2$ for every $u\in U_1$. The assumptions of Lemma~\ref{lem:connections} are met (in particular $|Z_1|\gg |U_1|$), and therefore, applying it to the pairs of vertices $\{(u,u):u \in U_1\}$ (to find paths through $Z_1$) we find a family $\cC_2$ of $|U_1|$ vertex-disjoint cycles $C_\ell$ that cover all of $U_1$ (and some subset of $Z_1$). Next, we greedily find, applying Fact~\ref{fact:cyclecount}, $(m-|U_1|(\ell-1))/\ell$ cycles $C_\ell$ in $Z_1\setminus V(\cC_2)$, so a set $Z'_1$ of  exactly\footnote{Note that this is possible due to divisibility conditions. Indeed it is clear that $\ell \mid (|U_1|+|V(\cS)|)$ as we look to find the remaining $C_\ell$-factor on $U_1\cup V(\cS)$. Also, Fact \ref{fact:absorbing} guarantees that $\ell \mid (|V(\cS)|-m)$ and so we can conclude that $\ell \mid (m+|U_1|)$ and hence $\ell\mid (m-|U_1|(\ell-1))$ as required.} $m$ vertices of $Z_1$ remains uncovered. But then, letting $Z_1''=Z_1\setminus Z_1'$, Fact~\ref{fact:absorbing} guarantees the existence of a $C_\ell$-factor on $V(\cS)\setminus Z_1''$. This then gives us a copy of $F$ in $G$.

Note that we applied Fact~\ref{fact:cyclecount} linearly many times. 
Moreover, we applied Corollary~\ref{cor:partition}, Lemma~\ref{lem:connections}, Fact~\ref{fact:absorbing} and Lemma~\ref{lem:absorbing_structure} constantly many times.
So we conclude that we can indeed find a copy of $F$ in polynomial time.
%
\end{proof}

\section{Proof of Theorem~\ref{thm:long_cycles}} \label{Sec: Proof of Theorem long cycles}

Before proving Theorem~\ref{thm:long_cycles}, let us sketch some of the ideas that arise in the proof. 
Firstly we will apply Lemma~\ref{lem:absorbing_structure} to show the existence of an absorbing structure $\cS=(T, \cP_1, A, \cP_2, Z, Z_1)$ for cycles of length $4$  with
  flexibility $m=\lfloor\gamma n\rfloor$, with $\gamma \leq \alpha(4)=1/(240(K+2))$, as defined in Lemma \ref{lem:absorbing_structure}.
Recall that Fact~\ref{fact:absorbing} guarantees that no matter which $m$ vertices of $Z_1$ we remove, on the rest of the vertices of $\cS$ we can find a $C_4$-factor ({which will contain exactly} $3m+|E(T)|$ copies of {$C_4$}). 
{L}et us relabel the $r:=3m+|E(T)|$ paths of length two in $\cP_1\cup \cP_2$ as $\cQ=\{Q_1, Q_2, \dots, Q_{r}\}$, let $Q_h=a_h b_h c_h$ for each $h\in [r]$ {and let $Y=Z\dcup  A$}. 
{Now the property of the absorbing structure can be rephrased as follows. After removing exactly $m$ vertices, $Z'$, from $Z_1\subset Y$, there is a perfect matching between $\cQ$ and $Y\setminus Z'$ such that if $Q_h\in \cQ$ is matched with $y\in Y$, then $a_hyc_hb_h$ forms a copy of $C_4$. In what follows, the idea is to omit an edge (for example, $a_hb_h$) from each of these $C_4$ to get paths of length three which we will connect to longer paths. The key point is that we can do this by only omitting edges in the length two paths from $\cQ$. Thus we can simply connect vertices from paths in $\cQ$ through short connecting paths. Eventually, this will lead to a longer path that will contribute to our factor and although we do not know exactly what these paths will be (as it depends on the choice of matching to $y\in Y$), the lengths of the paths and the vertices not in $Y$ are fixed.}
{More precisely, w}e will group the paths in $\cQ$ according to the desired lengths of the cycle and connect the ones in the same group e.g. connect $a_h$ with $b_{h-1}$ and connect $b_h$ with $a_{h+1}$.
At the end of the proof, by Fact~\ref{fact:absorbing} we can match every remaining vertex $y\in Y$ 
to one of the $Q_h$'s, such that ${a_h} {y} {c_h} {b_h}$ forms a copy of $P_3$ which will contribute to some longer path which in turn is part of a cycle in $F$.

\begin{proof}[Proof of Theorem~\ref{thm:long_cycles}]
{Let $K=68042$.}
Let $L \ge 8000K$ and fix $\gamma:= 1/(600(K+2))\le\alpha(4)$ with $\alpha(4)$ defined in Lemma \ref{lem:absorbing_structure}.  Next, choose $\eps \le \eps_1:=\min\{\delta/(1600000K), \eps_{\ref{lem:absorbing_structure}}, \eps_{\ref{lem:connections}}\}$, where $\eps_{\ref{lem:absorbing_structure}}$ is as asserted by Lemma~\ref{lem:absorbing_structure} on input $\delta$, $\alpha=\gamma$, $\ell=4$  and $\eps_{\ref{lem:connections}}$ is as asserted by Lemma~\ref{lem:connections} on input $\beta=\gamma$, $\delta':=\delta/16$ and $\ell=3$.  
Let $n_0$ be large enough.
Let $F$ be a graph whose components are cycles of length greater than $L$. We can assume that $v(F)\geq n-L$, otherwise we can instead consider a supergraph by adding cycles of length $L+1$.  Let $F$ consist of $t$ cycles of lengths $l_1\ge  \cdots \ge l_t$, and let $l=\sum_i^t l_i$. Note that $t\leq n/L$ and $n-L \leq l \leq n$. We will exhibit an algorithm  which finds  $F\subset G$.

Let $m=\gamma n$. Apply Lemma~\ref{lem:absorbing_structure} to get an absorbing structure $\cS=(T, \cP_1, A, \cP_2, Z, Z_1)$ for cycles of length $4$  with
  flexibility $m$ and a vertex set $W\subset V(G)\setminus V(\cS)$, with $|W|= n/4$, such that for any vertex~$v$ in~$G$, we have
  $\deg(v, W) \ge \delta p|W|/8$.  Label the vertices and paths of $\cS$ as in the discussion above. 
  In particular, recall that $r:=3m+|E(T)|$.
Let $m':=\eps n$, and let $Z'\subset Z_1$ be an arbitrary subset of size $m+2m'+4t$. Let $V_0\subset (V(G) \setminus V(\cS))\cup (Z_1\setminus Z')$ be the set of vertices $v$ such that $\deg(v,Z')\leq  \frac{p|Z'|}{2}$. 
Write $V_0:=\{v_1,v_2,\ldots,v_{|V_0|}\}$.
By Fact~\ref{fact:irregular}~\ref{fact:irre1}, we have that $|V_0|\leq 4 \eps^2\gamma^{-1}p^2n$. 
We find nonnegative integers $q_{ij}$, $i\in [t], j\in [3]$ such that the following holds:
\begin{itemize}
\item ${6}q_{i1} + 3q_{i2}+3q_{i 3}\le l_i - {10}$, for each $i\in [t]$,
\item $\sum_{i=1}^t q_{i 1} = r$, $\sum_{i=1}^t q_{i 2} = |V_0|$, and $\sum_{i=1}^t q_{i 3}= m'$.
\end{itemize}
Such a choice can be achieved easily since $r=3m+|E(T)|$ and $6r+3|V_0|+3m'\ll l-15t$. 

We now run the first phase of our algorithm:

\begin{enumerate}[label=($\roman*$)]

\item We arbitrarily partition the set $\{{\{a_h, b_h, c_h\}, {h\in [r]}}\}$ into $t$ subsets of sizes $q_{11}, q_{21}, \dots, q_{t1}$ and partition $V_0$ into $t$ subsets of sizes $q_{12},\dots, q_{t2}$. 

\item For $i\in [t]$, we fix an arbitrary linear order of the $q_{i1}$ triples of vertices and $q_{i2}$ vertices of $V_0$, and insert two new vertices $x_{i}^1, x_{i}^2$ not in $W\cup V_0\cup V(\cS)$ to the ordering, one to the beginning, one to the end.
Apply Lemma~\ref{lem:connections} to the pairs $\{{b_{h-1}, a_h}\}$ of consecutive elements from each group simultaneously (we view each single vertex $v$ in the ordering as $v={a_h=b_h}$), and get disjoint length three paths through $W\setminus V_0$ joining the pairs. 
This is possible because the number of pairs we connect is at most $2t+r+|V_0|\le 2n/L + 3m(1+K) + 4 \eps^2\gamma^{-1}p^2n\le 2n/L+3(K+2)\gamma n < n/120$, and every vertex has degree at least $\delta p|W|/8 - |V_0|\ge \delta p|W|/9$ to $W\setminus V_0$, and $|W\setminus V_0|\ge n/5$.
\end{enumerate}

For each $i\in [t]$, we obtain a sequence of paths on (in total) $5q_{i1} + 3q_{i2} + {3}$ vertices (they will become a single path of length $6q_{i1} + 3q_{i2} + {3}$ after absorbing \emph{exactly} $q_{i1}$ vertices from $Z_1$).
Next we will greedily find paths for each $i\in[t]$ which will comprise the majority of the remainder of the cycles. 
\begin{enumerate}[label=($\roman*$)]
\setcounter{enumi}{2}
\item {Fix $U$ to be the vertices in $(V(G)\setminus(V(\cS)))\cup (Z_1\setminus Z')$ which were not used in the paths chosen in the first phase}.  For $i\in [t]$, we {repeatedly} find  a path of length exactly $l_i-{6}q_{i1}-3q_{i2}-3q_{i3}-{9}$ in the uncovered vertices {of $U$} using Lemma~\ref{lem:long_paths} (for this observe that there are  at least $\ge \eps n$  unused vertices from $U$  by the choice of the parameters). 
Denote the endpoints of the path by $x_{i}^3$ and $x_{i}^4$.
 
\item Arbitrarily choose $m'$ vertices from $U$ (it could happen that there are more 
vertices in $U$ but only if $F$ has less than $n$ vertices), partition and label them in 
such a way that for each~$i$ there are $q_{i3}$ vertices $u_{i,1},\ldots,u_{i,q_{i3}}$. 

\item Apply Lemma~\ref{lem:connections} to find paths of length 3 to connect the following set of pairs 
\[
\bigcup_{i=1}^t\{(x_{i}^2, x_{i}^3),(x_{i}^4, u_{i,1}), (u_{i,1}, u_{i,2}),\dots,(u_{i,q_{i3}}, x_{i}^1)\}
\]
with inner vertices from $Z'$.
Note that this is possible as all the vertices of the pairs have good degree to $Z'$ and the number of pairs to connect is $2t+\sum_i q_{i3} = 2t + m'$, which is much less than $m=\gamma n$. 

\item In the previous step we used exactly $2m'+4t$ vertices of $Z'$ in length 3 paths. 
Thus the set $Z''\subseteq Z_1$ of unused vertices has size exactly 
$m$. 
By Fact~\ref{fact:absorbing} we can find a $C_4$-factor on $V(\cS)\setminus (Z_1\setminus Z'')$ in polynomial time. 
Note that the paths $a_j {y_j} c_j b_j$ for each $C_4$ on $\{{y_j}, a_j, b_j, c_j\}$ will complete the cycles of length exactly
\[
(6q_{i1} + 3q_{i2} + {3}) + (l_i-{6}q_{i1}-3q_{i2}-3q_{i3}-{9}) + 3q_{i3} +6 = l_i
\] 
for each $i\in [t]$.
Thus,  we have found a copy of $F$ in $G$. 
\end{enumerate}

Note that we can compute the values of $q_{ij}$ greedily in time $O(n)$. 
Each of Lemma~\ref{lem:connections}, Fact~\ref{fact:absorbing} and Lemma~\ref{lem:absorbing_structure} runs in polynomial time and we use them at most twice.
Finally, we applied Lemma~\ref{lem:long_paths} $t$ times and so  
 the overall running time is polynomial.
\end{proof}

Let us mention here that one could also define an absorbing structure specifically for the longer cycles we build in Theorem \ref{thm:long_cycles}, connecting edges into paths according to the adjacencies of a template.  Although this alternative structure would be easier to describe and would remove some of the technicalities in the above proof, we chose to instead work from the absorbing structure used for finding factors which involve short cycles, for the sake of brevity.

\section{A proof of Theorem~\ref{thm:Nenadov}} \label{Sec: Proof of Theorem Nenadov}

Nenadov's proof is algorithmic, except the proof of~\cite[Lemma 3.5]{Nen18}, in which he used a Hall-type result for hypergraphs due to Haxell. 
Here we give an alternative proof of this lemma, which moreover provides a polynomial time algorithm. 

We first need to recall some definitions from~\cite{Nen18}. 
Let $K_4^-$ be the unique graph with 4 vertices and 5 edges.
Define an $\ell$-chain as a graph obtained by sequentially identifying $\ell$ copies of $K_4^-$ on vertices of degree $2$.
Note that an $\ell$-chain contains exactly $\ell+1$ vertices such that the removal of any one of them results in a graph that has a triangle-factor.
These vertices are called \emph{removable}.

We say that a triangle in $G$ \emph{traverses} three chains $D_1$, $D_2$ and $D_3$ if it intersects all of them at some removable vertices.
Observe that if $D_1$, $D_2$ and $D_3$ are disjoint chains in $G$ and there exists a triangle in $G$ traversing them, then $G[V(D_1)\cup V(D_2)\cup V(D_3)]$ contains a triangle-factor.


Here we state~\cite[Lemma 3.5]{Nen18} and give an alternative (algorithmic) proof.

\begin{lemma}[Lemma 3.5 in~\cite{Nen18}]
Let $G$ be a $(p, \lambda)$-bijumbled graph on $n$ vertices with $\lambda\le \eps p^2 n$ for some $\eps\in (0, 1/16]$.
Suppose we are given disjoint $\ell$-chains $D_1', \dots, D_t'\subseteq G$ for some $t, \ell \in \NN$ such that $\ell$ is even, $t\ge 2000$ and $400\lambda/p^2\le t(\ell+1)\le n/24$.
Then for any subset $W\subseteq V(G)\setminus \bigcup_{i\in [t]} V(D_i')$ of size $|W|\ge n/4$ there exist disjoint $(\ell/2)$-chains $D_1,\dots, D_{2t}\subseteq G[W]$ with the following property: for every $L\subseteq [2t]$ there exists $L'\subseteq [t]$ such that
\[
G\left[ \bigcup_{i\in L}V(D_i)\cup \bigcup_{i\in L'}V(D_i') \right]
\]
contains a triangle-factor, which can be found in polynomial time.
\end{lemma}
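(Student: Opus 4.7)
My plan is to bypass Haxell's theorem by constructing the $(\ell/2)$-chains together with an explicit robust auxiliary structure in the spirit of the bipartite templates of Section~\ref{subsec: template}. The high-level approach: greedily extract the small chains inside $G[W]$; link them to the big chains via traversing triangles prescribed by an explicit template $T^*$; and for any requested $L\subseteq[2t]$ reduce the existence of the triangle-factor to a matching problem in $T^*$ solvable in polynomial time by the Micali--Vazirani algorithm~\cite{MV80}.

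First, I would construct disjoint $(\ell/2)$-chains $D_1,\ldots,D_{2t}\subseteq G[W]$ greedily. Each $K_4^-$ block is built by locating a triangle and a common neighbour of two of its vertices using Fact~\ref{fact:irregular}; successive $K_4^-$'s are then glued along degree-two vertices to produce a chain. The bound $2t(3\ell/2+1)\le n/16$ (which follows from $t(\ell+1)\le n/24$) together with $|W|\ge n/4$ leaves ample room at each step, and discarding low-degree vertices via Fact~\ref{fact:irregular} lets us simultaneously enforce that the removable vertices of each $D_i$ see the removable vertices of $D_1',\dots,D_t'$ in the right proportion.

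Second, I would use Lemma~\ref{lem:bipartite template} to construct a robust bipartite template $T^*$ tuned to the asymmetry between $2t$ small and $t$ big chains, and identify each vertex of $T^*$ with one of the chains. For each edge of $T^*$ a traversing triangle joining the two corresponding chains (together with a third chain used as a connector) is located using Lemma~\ref{lem:connections} and the bijumbled property, all in polynomial time. The absorbing property then follows directly: given any $L\subseteq[2t]$, the flexibility of $T^*$ produces, after deleting the indices corresponding to $[2t]\setminus L$, a perfect matching $M$ in $T^*$, which combined with the precomputed traversing triangles yields the set $L'$ and an explicit triangle-factor on $\bigcup_{i\in L}V(D_i)\cup\bigcup_{k\in L'}V(D_k')$. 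The matching itself is found efficiently via \cite{MV80}.

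The main obstacle I anticipate is numerological rather than combinatorial. The template from Lemma~\ref{lem:bipartite template} has rigid ratios ($|I|=3m$ and $|J_1|=|J_2|=2m$) and its flexibility removes exactly $m$ vertices, whereas we must allow $|L|$ to vary over all of $\{0,1,\ldots,2t\}$ and a traversing triangle naturally covers three chains at once. Reconciling the parity and divisibility-by-three constraints will likely require mixing different types of absorbing triples (some of the form "two small plus one big", some "one small plus two big") and perhaps a mild modification of the template, so that any $|L|$ can be balanced by a valid $|L'|\le t$. Working through this bookkeeping---rather than finding the chains or the triangles themselves---looks like the genuinely delicate part of the argument.
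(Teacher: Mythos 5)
Your plan hinges on precomputing a single sparse template $T^*$ plus traversing triangles for its edges, so that any request $L$ reduces to a perfect matching in $T^*$ after deleting the vertices indexed by $[2t]\setminus L$. This has a genuine gap, and it is not the divisibility bookkeeping you flag at the end. The robustness supplied by Lemma~\ref{lem:bipartite template} is of the wrong shape: a template with flexibility $m$ guarantees a perfect matching only after deleting \emph{exactly} $m$ vertices, all from the designated flexible set $J_1$, whereas here $L\subseteq[2t]$ is arbitrary, so $|L|$ ranges over all of $\{0,1,\dots,2t\}$ and the deleted set can be almost the entire small-chain side (take $|L|=1$); no mild modification of that template survives deletions of arbitrary size. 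A second structural problem is that a traversing triangle consumes \emph{three} whole chains, so the absorption is inherently a 3-uniform matching question: encoding each absorbing triple as an \emph{edge} of a bipartite $T^*$ ``together with a third chain used as a connector'' leaves unresolved how the connector chains, fixed at precomputation time before $L$ is known, are kept pairwise disjoint across the edges of whatever matching is later selected and how they are charged to $L'$ (the factor must cover exactly $\bigcup_{i\in L}V(D_i)\cup\bigcup_{i\in L'}V(D_i')$ and nothing else). Finally, Lemma~\ref{lem:connections} connects prescribed pairs of vertices by paths; it does not by itself produce triangles meeting three chains in removable vertices, which is the object actually needed.

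The paper's proof precomputes no auxiliary structure at all: it splits $D_1',\dots,D_t'$ into four groups $\cD_1,\dots,\cD_4$, and chooses the $(\ell/2)$-chains in $W$ (via the chain-building lemma of \cite{Nen18}) avoiding the at most $\lambda$ vertices with low degree to the removable vertices of the reserve groups $\cD_3,\cD_4$. Then, \emph{given} $L$, the triangle-factor is built greedily from scratch: first traversing triangles are packed among triples of the chains $D_i$, $i\in L$, which is possible while more than $t/8$ remain because any three groups of roughly $t/24$ chains carry more than $2\lambda/p^2$ removable vertices each and hence span a triangle by bijumbledness; each leftover small chain is then absorbed by a triangle into one unused chain of $\cD_1$ and one of $\cD_2$, and the at most $2\lambda/\ell$ small chains for which this greedy step stalls are finished using $\cD_3,\cD_4$, to which their vertices have high degree by the initial choice. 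The set $L'$ is simply the set of big chains consumed, every $|L|$ is handled uniformly without parity issues (each triangle covers either three small chains or one small and two big), and polynomiality is immediate (brute-force triangle search suffices; \cite{MV80} is not needed for this lemma). If you want to keep a template-style argument, you would need a structure robust under deletion of an arbitrary subset of one side and adapted to triples rather than edges, which is a genuinely different and stronger object than the template of Section~\ref{subsec: template}.
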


\begin{proof} We set $\eps:=1/16$. 
Note that a similar calculation as in the proof of Fact~\ref{fact:irregular}~\ref{fact:irre1} shows that the number of vertices which have at most $\eps p t \ell$ neighbours in a set of size at least $t(\ell+1)/8\ge 50\lambda/p^2$ is at most
\begin{equation}\label{eq:lambda}
\frac{\lambda^2 t(\ell+1)/8}{(1/8-\eps)^2 p^2 t^2 (\ell+1)^2} \le \frac{\lambda}{3200(1/8-\eps)^2} < \lambda/2.
\end{equation}

Given $\ell$-chains $D_1', \dots, D_t'$, we partition them arbitrarily into four groups of almost equal sizes, $\cD_1, \dots, \cD_4$.
Note that for $\cD_3$ and $\cD_4$, since each of them contains at least $t(\ell+1)/4$ removable vertices, by~\eqref{eq:lambda} the number of vertices of $G$ that have degree less than $\eps p t \ell\le \eps p n/24$ to either of their removable vertices is at most $\lambda$.
Now we greedily pick $2t$ $(\ell/2)$-chains $D_1,\dots, D_{2t}$ in $W$ but avoiding these bad vertices by~\cite[Lemma 3.2]{Nen18}.
It remains to verify the `absorption' property.
Fix any subset $L\subset [2t]$ of $(\ell/2)$-chains $D_i$, $i\in L$.
We first greedily find triangles traversing $(\ell/2)$-chains (and thus obtain triangle-factors on them) until $t/8$ of them are left.   
Indeed, this is possible since as long as there are more than $t/8$ of them left, we can greedily partition them into three groups of size roughly $t/24$.
Because $(t/24) (\ell/2+1) > t(\ell+1)/48> 2\lambda/p^2$, it follows from~\eqref{eq:EML2} (for a proof see for example~\cite[Lemma 2.4]{Nen18}), we find a triangle with one vertex from each group.
This triangle traverses the three chains containing it and thus there is a triangle-factor covering these three chains.
So we can reduce the number of chains by $3$.

We will match the remaining $t/8$ $(\ell/2)$-chains with the $\ell$-chains.
We start with using $\ell$-chains in $\cD_1$, $\cD_2$ and recursively find triangles traversing one $(\ell/2)$-chain and one $\ell$-chain in $\cD_1$, one $\ell$-chain in $\cD_2$.
That is, as long as there exists a vertex $v$ in one of the `unmatched' chains that sends more than $\eps p t \ell$ edges to the unused removable vertices in both $\cD_1$ and $\cD_2$, then we pick an edge (whose existence is asserted by~\eqref{eq:EML2}) from these neighbourhoods, namely, a triangle containing $v$.
Note that when we stop, the vertices remaining unmatched have degree at most $\eps p t \ell$ to the unused removable vertices of either in $\cD_1$ or $\cD_2$.
Note that there are still roughly half of the chains in $\cD_1$ and $\cD_2$ left, which contain at least $(\ell+1) \cdot t/8$ removable vertices in both $\cD_1$ and $\cD_2$.
Thus, by~\eqref{eq:lambda} there are at most $\lambda$ vertices that send low degree to either of them, namely, at most $2\lambda/\ell$ $(\ell/2)$-chains are left unmatched.
Now we can proceed to match the chains greedily by $\cD_3$ and $\cD_4$.
This is possible because each time we match a chain, we consume $\ell+1$ removable vertices from $\cD_3$ and $\cD_4$, respectively, and so in total this will consume at most $(\ell+1)(2\lambda/\ell) = 2\lambda(1+1/\ell)$ removable vertices, which is much less than $\eps p t \ell$.

For the running time, note that we used~\cite[Lemma 3.2]{Nen18} in the proof, but the desired chains can be constructed by depth-first search, which can be done in polynomial time. 
We also used~\cite[Lemma 2.4]{Nen18} to claim the existence of a triangle, but we can find this triangle by brute-force searching the neighbourhood of a vertex. 
Finally, it takes polynomial time  
to decide which $v$ to use and 
to find the triangle containing $v$.
\end{proof}

\section{Concluding remarks}
In this paper we answered the question of Nenadov~\cite{Nen18} by providing a deterministic polynomial time algorithm, which finds any given $2$-factor in a $(p,\eps p^2n/\log n)$-bijumbled graph on $n$ vertices of minimum degree $\delta pn$ (for any fixed $\delta>0$), with $p>0$ and some absolute parameter $\eps=\eps(\delta)>0$. This is optimal up to the $O(\log n)$-factor. 
It also follows from the proof that the strongest condition hinges on the fact that a triangle might be present in a $2$-factor (see Theorem~\ref{thm:Nenadov}). Indeed, it follows from the proof of Theorem~\ref{thm:main} that the condition $\lambda\le \eps p^2n$ would suffice for a $2$-factor of girth at least $4$ and a solution to Conjecture \ref{conj:KSS} would imply that this condition would guarantee the existence of any $2$-factor. The celebrated construction, due to Alon~\cite{Alon94}, of triangle-free pseudorandom graphs has been extended by Alon and Kahale~\cite{AK98} to graphs without odd cycles of length $2\ell+1$. They constructed $\left(n,\Theta(n^{2/(2\ell+1)}),\Theta(n^{1/(2\ell+1)})\right)$-graphs of odd girth at least $2\ell+3$. It is proved in~\cite[Proposition~4.12]{KS06} that an $(n,d,\lambda)$-graph with $\lambda^{2\ell-1}\ll d^{2\ell}/n$ contains 
a copy of $C_{2\ell+1}$. Since $\lambda=\Omega(\sqrt{d})$ for, say $d\le n/2$, we have the lower bound on $d=\Omega(n^{2/(2\ell+1)})$. As for even cycles, a theorem of Bondy and Simonovits~\cite{BS74}, which doesn't require any bound on  $\lambda$, states that $d\gg n^{1/\ell}$ already implies the existence of $C_{2\ell}$. 
It is thus a natural avenue to further investigate the (almost) optimal conditions of when a $(p,\lambda)$-bijumbled graph contains a given $2$-factor of girth at least $\ell$.  When $\ell=n$, the best condition for $(n,d,\lambda)$-graphs is provided by the result of Krivelevich and Sudakov~\cite{KriSudHam} which gives $\lambda\le d(\log\log 
n)^2/ (1000\log n \log\log\log n)$, while another conjecture of these authors~\cite{KriSudHam} states that $\lambda\le cd$ should already be sufficient for some absolute $c>0$. This conjecture would follow from the famous toughness conjecture of Chv{\'a}tal~\cite{Ch73}, as shown by Alon~\cite{Al95}.

\bibliography{literature}

@article{AK98,
  author =        {Alon, Noga and Kahale, Nabil},
  journal =       {Mathematical Programming},
  number =        {3},
  pages =         {253--264},
  title =         {Approximating the independence number via the
                   $\vartheta$-function},
  volume =        {80},
  year =          {1998},
}

@Article{Wal36,
author="Walfisz, Arnold",
title="Zur additiven {Z}ahlentheorie. {II}.",
journal="Mathematische Zeitschrift",
year="1936",
day="01",
volume="40",
number="1",
pages="592--607",
issn="1432-1823",
doi="10.1007/BF01218882",
url="https://doi.org/10.1007/BF01218882"
}

@article{KL14,
	Author = {Kim, Jeong Han and Lee, Sang June},
	Journal = {SIAM J. Discrete Math.},
	Number = {3},
	Pages = {1467--1478},
	Publisher = {SIAM},
	Title = {Universality of random graphs for graphs of maximum degree two},
	Volume = {28},
	Year = {2014}}

@Article{RRSz08,
    Author = {Vojt\v{e}ch R\"odl and Andrzej Ruci\'nski and Endre Szemer\'edi},
    Title = {An approximate {D}irac-type theorem for $k$-uniform hypergraphs},
    FJournal = {{Combinatorica}},
    Journal = {{Combinatorica}},
    ISSN = {0209-9683; 1439-6912/e},
    Volume = {28},
    Number = {2},
    Pages = {229--260},
    Year = {2008},
    Publisher = {Springer, Berlin/Heidelberg; J\'anos Bolyai Mathematical Society, Budapest},
    MSC2010 = {05C65 05C45 05D05},
    Zbl = {1164.05051}
}

@inproceedings{ACKRRSz00,
  title={Universality and tolerance},
  author={Alon, N. and Capalbo, M. and Kohayakawa, Y. and R\"odl, V. and Ruci\'nski, A. and Szemer{\'e}di, E.},
  booktitle={{Proceedings of the 41st IEEE FOCS}},
  pages={14--21},
  year={2000},
}

@incollection{Alon10,
	Author = {Alon, N.},
	Booktitle = {{An irregular mind. Szemer\'edi is 70. Dedicated to Endre Szemer\'edi on the occasion of his seventieth birthday.}},
	Isbn = {978-3-642-14443-1/pbk; 978-963-9453-14-2; 978-3-642-14444-8/ebook},
	Pages = {21--37},
	Publisher = {Berlin: Springer},
	Title = {{Universality, tolerance, chaos and order}},
	Year = {2010}}

@incollection{DKRR14,
author={Dellamonica, Jr., D. and Kohayakawa, Y.  and R{\"o}dl, V. and Ruci\'nski, A.},
  title={An improved upper bound on the density of universal random graphs},
  booktitle={LATIN 2012: Theoretical Informatics},
  pages={231--242},
  year={2012},
  publisher={Springer}
}

@incollection{ACKRRS01,
	Author = {Alon, Noga and Capalbo, Michael and Kohayakawa, Yoshiharu and R{\"o}dl, Vojt{\v{e}}ch and Ruci{\'n}ski, Andrzej and Szemer{\'e}di, Endre},
	Booktitle = {Approximation, randomization, and combinatorial optimization ({B}erkeley, {CA}, 2001)},
	Pages = {170--180},
	Publisher = {Springer, Berlin},
	Series = {Lecture Notes in Comput. Sci.},
	Title = {Near-optimum universal graphs for graphs with bounded degrees (extended abstract)},
	Volume = {2129},
	Year = {2001}}

@article {CFNS16,
    AUTHOR = {Conlon, David and Ferber, Asaf and Nenadov, Rajko and {\v S}kori{\'c},
              Nemanja},
     TITLE = {Almost-spanning universality in random graphs},
   JOURNAL = {Random Struct. Algorithms},
  FJOURNAL = {Random Structures \& Algorithms},
    VOLUME = {50},
      YEAR = {2017},
    NUMBER = {3},
     PAGES = {380--393},
}

@article {AlCa07,
    AUTHOR = {Alon, N. and Capalbo, M.},
     TITLE = {Sparse universal graphs for bounded-degree graphs},
   JOURNAL = {Random Struct. Algorithms},
  FJOURNAL = {Random Structures \& Algorithms},
    VOLUME = {31},
      YEAR = {2007},
    NUMBER = {2},
     PAGES = {123--133},
}

@InCollection{AlCa08,
    Author = {Alon, N.  and  Capalbo, M.},
    Title = {{Optimal universal graphs with deterministic embedding.}},
    BookTitle = {{Proceedings of the nineteenth annual ACM-SIAM symposium on discrete algorithms (SODA 2008), San Francisco, CA, January 20--22, 2008}},
    ISBN = {978-0-89871-647-4},
    Pages = {373--378},
    Year = {2008},
    Publisher = {New York, NY: Association for Computing Machinery (ACM); Philadelphia, PA: Society for Industrial and Applied Mathematics (SIAM)},
}

@Article{HLW06,
    Author = {Shlomo {Hoory} and Nathan {Linial} and Avi {Widgerson}},
    Title = {{Expander graphs and their applications.}},
    FJournal = {{Bulletin of the American Mathematical Society. New Series}},
    Journal = {{Bull. Am. Math. Soc., New Ser.}},
    ISSN = {0273-0979; 1088-9485/e},
    Volume = {43},
    Number = {4},
    Pages = {439--561},
    Year = {2006},
    Publisher = {American Mathematical Society (AMS), Providence, RI},
    DOI = {10.1090/S0273-0979-06-01126-8},
    MSC2010 = {68R10 68-02 05-02 68Q15 68Q17 94B05 05C25 05C80},
    Zbl = {1147.68608}
}

@article{Ch73,
  title={Tough graphs and {H}amiltonian circuits},
  author={Chv{\'a}tal, Vasek},
  journal={Discrete Math.},
  volume={5},
  number={3},
  pages={215--228},
  year={1973},
  publisher={North-Holland}
}

@article{Al95,
  title={Tough Ramsey graphs without short cycles},
  author={Alon, Noga},
  journal={Journal of Algebraic Combinatorics},
  volume={4},
  number={3},
  pages={189--195},
  year={1995},
}

@article{KriSudHam,
    AUTHOR = {Krivelevich, Michael and Sudakov, Benny},
     TITLE = {Sparse pseudo-random graphs are {H}amiltonian},
   JOURNAL = {J. Graph Theory},
  FJOURNAL = {Journal of Graph Theory},
    VOLUME = {42},
      YEAR = {2003},
    NUMBER = {1},
     PAGES = {17--33},
      ISSN = {0364-9024},
     CODEN = {JGTHDO},
   MRCLASS = {05C80 (05C45)},
  MRNUMBER = {1943104 (2003j:05114)},
MRREVIEWER = {Catherine S. Greenhill},
       DOI = {10.1002/jgt.10065},
       URL = {http://dx.doi.org/10.1002/jgt.10065},
}

@incollection {Th87a,
    AUTHOR = {Thomason, Andrew},
     TITLE = {Pseudorandom graphs},
 BOOKTITLE = {Random graphs '85 ({P}ozna\'n, 1985)},
    SERIES = {North-Holland Math. Stud.},
    VOLUME = {144},
     PAGES = {307--331},
 PUBLISHER = {North-Holland, Amsterdam},
      YEAR = {1987},
   MRCLASS = {05C80},
  MRNUMBER = {930498},
MRREVIEWER = {Edward R. Scheinerman},
}

@article {CGW89,
    AUTHOR = {Chung, F. R. K. and Graham, R. L. and Wilson, R. M.},
     TITLE = {Quasi-random graphs},
   JOURNAL = {Combinatorica},
  FJOURNAL = {Combinatorica. An International Journal on Combinatorics and
              the Theory of Computing},
    VOLUME = {9},
      YEAR = {1989},
    NUMBER = {4},
     PAGES = {345--362},
}

@incollection {Th87b,
    AUTHOR = {Thomason, Andrew},
     TITLE = {Random graphs, strongly regular graphs and pseudorandom
              graphs},
 BOOKTITLE = {Surveys in combinatorics 1987 ({N}ew {C}ross, 1987)},
    SERIES = {London Math. Soc. Lecture Note Ser.},
    VOLUME = {123},
     PAGES = {173--195},
 PUBLISHER = {Cambridge Univ. Press, Cambridge},
      YEAR = {1987},
   MRCLASS = {05C80 (05C35 05C55)},
  MRNUMBER = {905280},
MRREVIEWER = {Colin J. H. McDiarmid},
}

@article {RRSz06,
    Author = {Vojt\v{e}ch R\"odl and Andrzej Ruci\'nski and Endre Szemer\'edi},
     TITLE = {A {D}irac-type theorem for $3$-uniform hypergraphs},
   JOURNAL = {Combin. Probab. Comput.},
  FJOURNAL = {Combinatorics, Probability and Computing},
    VOLUME = {15},
      YEAR = {2006},
    NUMBER = {1-2},
     PAGES = {229--251},
}

@article {KO12,
    AUTHOR = {K{\"u}hn, D. and Osthus, D.},
     TITLE = {On {P}\'osa's conjecture for random graphs},
   JOURNAL = {SIAM J. Discrete Math.},
  FJOURNAL = {SIAM Journal on Discrete Mathematics},
    VOLUME = {26},
      YEAR = {2012},
    NUMBER = {3},
     PAGES = {1440--1457},
}

@Article{LPS88,
    Author = {A. {Lubotzky} and R. {Phillips} and P. {Sarnak}},
    Title = {{Ramanujan graphs.}},
    FJournal = {{Combinatorica}},
    Journal = {{Combinatorica}},
    ISSN = {0209-9683; 1439-6912/e},
    Volume = {8},
    Number = {3},
    Pages = {261--277},
    Year = {1988},
    Publisher = {Springer, Berlin/Heidelberg; J\'anos Bolyai Mathematical Society, Budapest},
    DOI = {10.1007/BF02126799},
    MSC2010 = {05C35},
    Zbl = {0661.05035}
}

@article{ABKP15,
title={Tight {H}amilton cycles in random hypergraphs},
author={Allen, P. and B\"ottcher, J.  and Kohayakawa, Y. and Person, Y. },
   JOURNAL = {Random Struct. Algorithms},
   pages={446--465},
   volume={46},
   number={3},
   year={2015},
}

@article{fan1996hamiltonian,
  title={Hamiltonian square-paths},
  author={Fan, Genghua and Kierstead, Henry A},
  journal={J. Comb. Theory, Ser. B},
  volume={67},
  number={2},
  pages={167--182},
  year={1996},
  publisher={Elsevier}
}

@InCollection{KSSS02,
    Author = {J\'anos {Koml\'os} and Ali {Shokoufandeh} and Mikl\'os {Simonovits} and Endre {Szemer\'edi}},
    Title = {{The regularity lemma and its applications in graph theory.}},
    BookTitle = {{Theoretical aspects of computer science. Advanced lectures}},
    ISBN = {3-540-43328-7},
    Pages = {84--112},
    Year = {2002},
    Publisher = {Berlin: Springer},
    MSC2010 = {68R10 05C80 05C35 05C55 05C85},
    Zbl = {1054.68103}
}

@article {KRSSS07,
    AUTHOR = {Kohayakawa, Yoshiharu and R\"odl, Vojt\v ech and Schacht, Mathias
              and Sissokho, Papa and Skokan, Jozef},
     TITLE = {Tur\'an's theorem for pseudo-random graphs},
   JOURNAL = {J. Combin. Theory Ser. A},
  FJOURNAL = {Journal of Combinatorial Theory. Series A},
    VOLUME = {114},
      YEAR = {2007},
    NUMBER = {4},
     PAGES = {631--657},
      ISSN = {0097-3165},
   MRCLASS = {05C35 (05C80 05D40)},
  MRNUMBER = {2319167},
MRREVIEWER = {J\'ozsef Balogh},
       DOI = {10.1016/j.jcta.2006.08.004},
       URL = {https://doi.org/10.1016/j.jcta.2006.08.004},
}

@article{HKMP18,
  title={Clique-factors in sparse pseudorandom graphs},
  author={Han, Jie and Kohayakawa, Yoshiharu and Morris, Patrick and Person, Yury},
    journal={European J. Comb.},
  volume={82},
  pages={102999},
  year={2019},
  publisher={Elsevier}
}

@article{KSS04,
    AUTHOR = {Krivelevich, Michael and Sudakov, Benny and Szab{\'o}, Tibor},
     TITLE = {Triangle factors in sparse pseudo-random graphs},
   JOURNAL = {Combinatorica},
  FJOURNAL = {Combinatorica. An International Journal on Combinatorics and
              the Theory of Computing},
    VOLUME = {24},
      YEAR = {2004},
    NUMBER = {3},
     PAGES = {403--426},
      ISSN = {0209-9683},
   MRCLASS = {05C70 (05C80)},
  MRNUMBER = {2085364 (2005g:05120)},
MRREVIEWER = {David B. Penman},
       DOI = {10.1007/s00493-004-0025-8},
       URL = {http://dx.doi.org/10.1007/s00493-004-0025-8},
}

@Article{KSS_bl,
   author = {Koml{\'o}s, J{\'a}nos and S{\'a}rk{\"o}zy, G{\'a}bor N. and
                 Szemer{\'e}di, Endre},
    title = {Blow-up lemma},
  journal = {Combinatorica},
 fjournal = {Combinatorica. An International Journal on Combinatorics and
             the Theory of Computing},
   volume = 17,
     year = 1997,
   number = 1,
    pages = {109--123},
     issn = {0209-9683},
  mrclass = {05C35},
 mrnumber = {99b:05083},
   mrrevr = {Lane Clark},
}

@article {CFZ12,
    AUTHOR = {Conlon, David and Fox, Jacob and Zhao, Yufei},
     TITLE = {Extremal results in sparse pseudorandom graphs},
   JOURNAL = {Adv. Math.},
  FJOURNAL = {Advances in Mathematics},
    VOLUME = 256,
      YEAR = 2014,
     PAGES = {206--290},
      ISSN = {0001-8708},
   MRCLASS = {05C80 (05C35 05D10 05D40)},
  MRNUMBER = 3177293,
MRREVIEWER = {J{\'o}zsef Balogh},
       DOI = {10.1016/j.aim.2013.12.004},
       URL = {http://dx.doi.org/10.1016/j.aim.2013.12.004},
}

@Article{KSSz98,
    Author = {J\'anos {Koml\'os} and Gabor N. {S\'ark\"ozy} and Endre {Szemer\'edi}},
    Title = {{An algorithmic version of the blow-up lemma.}},
    FJournal = {{Random Structures \& Algorithms}},
    Journal = {{Random Struct. Algorithms}},
    ISSN = {1042-9832; 1098-2418/e},
    Volume = {12},
    Number = {3},
    Pages = {297--312},
    Year = {1998},
    Publisher = {Wiley, New York, NY},
    DOI = {10.1002/(SICI)1098-2418(199805)12:3<297::AID-RSA5>3.0.CO;2-Q},
    MSC2010 = {05C85 05C35},
    Zbl = {0917.05071}
}

@Article{BS74,
    Author = {J. A. {Bondy} and M. {Simonovits}},
    Title = {{Cycles of even length in graphs.}},
    FJournal = {{Journal of Combinatorial Theory. Series B}},
    Journal = {{J. Comb. Theory, Ser. B}},
    ISSN = {0095-8956},
    Volume = {16},
    Pages = {97--105},
    Year = {1974},
    Publisher = {Elsevier (Academic Press), San Diego, CA},
    DOI = {10.1016/0095-8956(74)90052-5},
    MSC2010 = {05C35},
    Zbl = {0283.05108}
}

@incollection{KS06,
    AUTHOR = {Krivelevich, Michael and Sudakov, Benny},
     TITLE = {Pseudo-random graphs},
 BOOKTITLE = {More sets, graphs and numbers},
    SERIES = {Bolyai Soc. Math. Stud.},
    VOLUME = {15},
     PAGES = {199--262},
 PUBLISHER = {Springer},
   ADDRESS = {Berlin},
      YEAR = {2006},
   MRCLASS = {05C80 (05C50)},
  MRNUMBER = {2223394 (2007a:05130)},
MRREVIEWER = {David B. Penman},
       DOI = {10.1007/978-3-540-32439-3_10},
       URL = {http://dx.doi.org/10.1007/978-3-540-32439-3_10},
}

@book {alon16,
    AUTHOR = {Alon, Noga and Spencer, Joel H.},
     TITLE = {The probabilistic method},
    SERIES = {Wiley Series in Discrete Mathematics and Optimization},
   EDITION = {Fourth},
 PUBLISHER = {John Wiley \& Sons, Inc., Hoboken, NJ},
      YEAR = {2016},
     PAGES = {xiv+375},
      ISBN = {978-1-119-06195-3},
   MRCLASS = {60-02 (05C80 05D40 60C05 60F10 60G42)},
  MRNUMBER = {3524748},
}

@Article{AC88,
   AUTHOR = {Alon, N. and Chung, F. R. K.},
    TITLE = {Explicit construction of linear sized tolerant networks},
BOOKTITLE = {Proceedings of the First Japan Conference on Graph Theory and
             Applications (Hakone, 1986)},
  JOURNAL = {Discrete Math.},
 FJOURNAL = {Discrete Mathematics},
   VOLUME = 72,
     YEAR = 1988,
   NUMBER = {1-3},
    PAGES = {15--19},
     ISSN = {0012-365X},
    CODEN = {DSMHA4},
  MRCLASS = {05C38 (05C35 90B10)},
 MRNUMBER = {90f:05080},
}

@article{Alon94,
    AUTHOR = {Alon, Noga},
     TITLE = {Explicit {R}amsey graphs and orthonormal labelings},
   JOURNAL = {Electron. J. Combin.},
  FJOURNAL = {Electronic Journal of Combinatorics},
    VOLUME = {1},
      YEAR = {1994},
     PAGES = {Research Paper 12, 8 pp\ (electronic)},
      ISSN = {1077-8926},
   MRCLASS = {05C75 (05C55)},
  MRNUMBER = {1302331 (95k:05145)},
MRREVIEWER = {R. H. Schelp},
       URL = {http://www.combinatorics.org/Volume_1/Abstracts/v1i1r12.html},
}

@InCollection{HKP18a,
    Author = {Jie {Han} and Yoshiharu {Kohayakawa} and Yury {Person}},
    Title = {{Near-perfect clique-factors in sparse pseudorandom graphs.}},
    BookTitle = {{Discrete mathematics days 2018. Extended abstracts of the 11th ``Jornadas de matem\'atica discreta y algor\'{\i}tmica'' (JMDA), Sevilla, Spain, June 27--29, 2018}},
    Pages = {221--226},
    Year = {2018},
    Publisher = {Amsterdam: Elsevier},
    MSC2010 = {05C42 05C69},
    Zbl = {1397.05093}
}

@unpublished{HKP18b,
  title={Near-optimal clique-factors in sparse pseudorandom graphs},
  author={Han, J. and Kohayakawa, Y.  and Person, Y.},
  note={arXiv:1806.00493, submitted},
}

@article {ABHKP17,
    AUTHOR = {Allen, Peter and B\"ottcher, Julia and H\`an, Hi\^ep and Kohayakawa,
              Yoshiharu and Person, Yury},
     TITLE = {Powers of {H}amilton cycles in pseudorandom graphs},
   JOURNAL = {Combinatorica},
  FJOURNAL = {Combinatorica. An International Journal on Combinatorics and
              the Theory of Computing},
    VOLUME = {37},
      YEAR = {2017},
    NUMBER = {4},
     PAGES = {573--616},
      ISSN = {0209-9683},
   MRCLASS = {05C35 (05A16)},
  MRNUMBER = {3694704},
       DOI = {10.1007/s00493-015-3228-2},
       URL = {https://doi.org/10.1007/s00493-015-3228-2},
}

@article{Nen18,
  title={Triangle-factors in pseudorandom graphs},
  author={Nenadov, Rajko},
  journal={Bull. London Math. Soc.},
  volume={51},
  number={3},
  pages={421--430},
  year={2019},
  publisher={Wiley Online Library}
}

@article{FN17,
  title={Spanning universality in random graphs},
  author={Ferber, Asaf and Nenadov, Rajko},
  journal={Random Struct. Algorithms},
  volume={53},
  number={4},
  pages={604--637},
  year={2018},
}

@article{ABHKP16,
  author =        {Allen, P. and B\"ottcher, J. and H{\`a}n, H. and
                   Kohayakawa, Y. and Person, Y.},
  title =         {Blow-up lemmas for sparse graphs},
  year =          {2016},
  eprint=         {1612.00622}  
}

@article{M14a,
	author = {Montgomery, R.},
	eprint = {1405.6559v2},
	title = {Embedding bounded degree spanning trees in random graphs},
	year = {2014},
}

@article{M19,
  title={Spanning trees in random graphs},
  author={Montgomery, Richard},
  journal={Adv. Math.},
  volume={356},
  pages={106793},
  year={2019},
  publisher={Elsevier}
}

@article{Kwan16,
	author = {Kwan, M.},
	eprint = {1611.02246},
	title = {Almost all {S}teiner triple systems have perfect matchings},
	year = {2016},
}

@article{FKL16,
  title={Optimal Threshold for a Random Graph to be $2$-Universal},
  author={Ferber, Asaf and Kronenberg, Gal and Luh, Kyle},
 journal={Trans.  Am. Math. Soc.},
  volume={372},
  number={6},
  pages={4239--4262},
  year={2019}
}

@article{conlon2017sequence,
  title={A sequence of triangle-free pseudorandom graphs},
  author={Conlon, David},
  journal={Combin. Probab. Comput.},
  volume={26},
  number={2},
  pages={195--200},
  year={2017},
  publisher={Cambridge University Press}
}

@misc{Kopparty11,
  author        = {S.~Kopparty},
  title         = {Cayley Graphs (Lecture notes)},
  year          = {2011},
  publisher={Rutgers University},
  note={available: \url{http://sites.math.rutgers.edu/~sk1233/courses/graphtheory-F11/cayley.pdf}}
}

@article{nenadov2018ramsey,
  title={On a {R}amsey-{T}ur\'an variant of the {H}ajnal-{S}zemer\'edi theorem},
  author={Nenadov, Rajko and Pehova, Yanitsa},
  journal={arXiv preprint arXiv:1806.03530},
  year={2018}
}

@article{kirkpatrick1983complexity,
  title={On the complexity of general graph factor problems},
  author={Kirkpatrick, David G. and Hell, Pavol},
  journal={SIAM J. Comp.},
  volume={12},
  number={3},
  pages={601--609},
  year={1983},
  publisher={SIAM}
}

@incollection{karp1972reducibility,
  title={Reducibility among combinatorial problems},
  author={Karp, Richard M},
  booktitle={Complexity of computer computations},
  pages={85--103},
  year={1972},
  publisher={Springer}
}
\end{document}